\title[Unicellular maps vs hyperbolic surfaces in large genus]
{Unicellular maps vs hyperbolic surfaces in large genus: simple closed curves}
\date{23 November, 2021; revised 10 December, 2021}
\author{Svante Janson}
\thanks{Supported by the Knut and Alice Wallenberg Foundation}
\address{Department of Mathematics, Uppsala University, PO Box 480,
SE-751~06 Uppsala, Sweden}
\email{svante.janson@math.uu.se}
\author{Baptiste Louf}
\address{Department of Mathematics, Uppsala University, PO Box 480, SE-751~06 Uppsala, Sweden}
\email{baptiste.louf@math.uu.se}
\subjclass[2010]{} 
\numberwithin{equation}{section}
\theoremstyle{plain}
\newtheorem{theorem}{Theorem}[section]
\newtheorem{lemma}[theorem]{Lemma}
\newtheorem{proposition}[theorem]{Proposition}
\newtheorem{corollary}[theorem]{Corollary}
\theoremstyle{definition}
\newtheorem{exampleqqq}[theorem]{Example}
\newtheorem{remarkqqq}[theorem]{Remark}
\newenvironment{remark}{\begin{remarkqqq}}
  {\hfill\qedsymbol\end{remarkqqq}}
\newtheorem{definition}[theorem]{Definition}
\newtheorem*{acks}{Acknowledgements}
\newtheorem{conjecture}[theorem]{Conjecture}
\newtheorem{open}[theorem]{Open problem}
\theoremstyle{remark}
\newenvironment{romenumerate}[1][-10pt]{
\addtolength{\leftmargini}{#1}\begin{enumerate}
 }{\end{enumerate}}
\newenvironment{PXenumerate}[1]{
\begin{enumerate}
 }{\end{enumerate}}
\newcounter{oldenumi}
{\setcounter{oldenumi}{\value{enumi}}
\begin{romenumerate} \setcounter{enumi}{\value{oldenumi}}}
{\end{romenumerate}}
\newcounter{thmenumerate}
\newcounter{xenumerate}   
\newcommand{\refT}[1]{Theorem~\ref{#1}}
\newcommand{\refTs}[1]{Theorems~\ref{#1}}
\newcommand{\refL}[1]{Lemma~\ref{#1}}
\newcommand{\refLs}[1]{Lemmas~\ref{#1}}
\newcommand{\refR}[1]{Remark~\ref{#1}}
\newcommand{\refS}[1]{Section~\ref{#1}}
\newcommand{\refSS}[1]{Section~\ref{#1}}
\newcommand{\refProp}[1]{Proposition~\ref{#1}}
\newcommand{\refD}[1]{Definition~\ref{#1}}
\newcommand\marginal[1]{\marginpar[\raggedleft\tiny #1]{\raggedright\tiny#1}}
\newcommand\REM[1]{{\raggedright\texttt{[#1]}\par\marginal{XXX}}}
\newcommand\XREM[1]{\relax}
\xdef\klockan{\the\count1.0\the\count255}
\xdef\klockan{\the\count1.\the\count255}\fi
\DeclareMathOperator*{\sumx}{\sum\nolimits^{*}}
\DeclareMathOperator*{\hsumx}{\widehat{\sum}}
\newcommand{\sumno}{\sum_{n=0}^\infty}
\newcommand{\sumk}{\sum_{k=1}^\infty}
\newcommand{\sumnu}{\sum_{\nu\ge1}}
\newcommand{\prodir}{\prod_{i=1}^r}
\newcommand{\m}{\mathbf{m}}
\newcommand\set[1]{\ensuremath{\{#1\}}}
\newcommand\bigset[1]{\ensuremath{\bigl\{#1\bigr\}}}
\newcommand\Bigset[1]{\ensuremath{\Bigl\{#1\Bigr\}}}
\newcommand\xpar[1]{(#1)}
\newcommand\bigpar[1]{\bigl(#1\bigr)}
\newcommand\Bigpar[1]{\Bigl(#1\Bigr)}
\newcommand\biggpar[1]{\biggl(#1\biggr)}
\newcommand\lrpar[1]{\left(#1\right)}
\newcommand\bigsqpar[1]{\bigl[#1\bigr]}
\newcommand\Bigsqpar[1]{\Bigl[#1\Bigr]}
\newcommand\lrsqpar[1]{\left[#1\right]}
\newcommand\bigabs[1]{\bigl\lvert#1\bigr\rvert}
\def\rompar(#1){\textup(#1\textup)}    
\newcommand\xfrac[2]{#1/#2}
\newcommand\xpfrac[2]{(#1)/#2}
\def\xexp(#1){e^{#1}}
\newcommand\ceil[1]{\lceil#1\rceil}
\newcommand\floor[1]{\lfloor#1\rfloor}
\newcommand\ntoo{\ensuremath{{n\to\infty}}}
\newcommand\Mtoo{\ensuremath{{M\to\infty}}}
\newcommand\start{\text{start}}
\newcommand\eend{\text{end}}
\newcommand\punkt{\xperiod}    
\newcommand\ie{i.e\punkt}
\newcommand\eg{e.g\punkt}
\newcommand\whp{whp}
\newcommand{\tend}{\longrightarrow}
\newcommand\dto{\overset{\mathrm{d}}{\tend}}
\newcommand\pto{\overset{\mathrm{p}}{\tend}}
\newcommand\bbR{\mathbb R}
\newcommand\bbN{\mathbb N}
\newcommand\bbZ{\mathbb Z}
\newcommand\bbZgeo{\mathbb Z_{\ge0}}
\newcommand\E{\operatorname{\mathbb E{}}}
\renewcommand\P{\operatorname{\mathbb P{}}}
\newcommand\Poi{\operatorname{Poi}}
\newcommand\ga{\alpha}
\newcommand\gb{\beta}
\newcommand\gd{\delta}
\newcommand\gl{\lambda}
\newcommand\gL{\Lambda}
\newcommand\gs{\sigma}
\newcommand\cC{\mathcal C}
\newcommand\cE{\mathcal E}
\newcommand\cN{\mathcal N}
\newcommand\cS{{\mathcal S}}
\newcommand\cT{{\mathcal T}}
\newcommand\cX{{\mathcal X}}
\newcommand\bt{\mathbf{t}}
\def\u{\mathbf U_{n,g}}
\newcommand\qw{^{-1}}
\newcommand\qww{^{-2}}
\newcommand\qq{^{1/2}}
\newcommand\intoo{\int_0^\infty}
\newcommand\ooo{[0,\infty)}
\newcommand\dd{\,\mathrm{d}}
\newcommand\lhs{left-hand side}
\newcommand\cperm{C-permutation}
\newcommand\ctree{C-decorated tree}
\newcommand\fS{\mathfrak{S}}
\newcommand\fsc{\fS^{\textsf{C}}}
\newcommand\Nxnm[1]{N_{#1;n,m}}
\newcommand\Nnunm{\Nxnm{\nu}}
\newcommand\Nnu{N_{\nu}}
\newcommand{\CPC}{\cN}
\newcommand\bm{\mathbf{m}}
\newcommand\kkx[1]{^{(#1)}}
\newcommand\kkk{\kkx{k}}
\newcommand\PP{\mathfrak{P}}
\newcommand\PPmx[1]{\PP^{[#1]}}
\newcommand\PPm{\PPmx{\bm}}
\newcommand\PPkmi{\PPmx{m_i}_{k_i}}
\newcommand\PPkm{\PPmx{m}_k}
\newcommand\ppkmi{\pMx{m_i}_{k_i}}
\newcommand\ppkm{\pMx{m}_{k}}
\newcommand\pMx[1]{P^{[#1]}}
\newcommand\pM{\pMx{\bm}}
\newcommand\CC{\mathfrak{C}}
\newcommand\cc{C}
\newcommand\CCx[1]{\CC^{[#1]}}
\newcommand\CCkx[1]{\CCx{#1}_k}
\newcommand\CCkm{\CCkx{m}}
\newcommand\CCkab{\CC_k^{[a,b)}}
\newcommand\tCCx[1]{\widetilde\CC^{[#1]}}
\newcommand\tCCkx[1]{\widetilde\CC^{[#1]}_k}
\newcommand\tCCkm{\tCCkx{m}}
\newcommand\tCCkmi{\tCCx{m_i}_{k_i}}
\newcommand\ccx[1]{\cc^{[#1]}}
\newcommand\cckx[1]{\cc^{[#1]}_k}
\newcommand\cckab{\cc_k^{[a,b)}}
\newcommand\cckm{\cckx{m}}
\newcommand\cckmi{\ccx{m_i}_{k_i}}
\newcommand\cckMx[1]{\cckx{#1;M}}
\newcommand\cckMabx[1]{\cckMx{a^{#1}(M),b^{#1}(M)}}
\newcommand\tccx[1]{\widetilde\cc^{[#1]}}
\newcommand\tcckx[1]{\tccx{#1}_k}
\newcommand\tcckmi{\tccx{m_i}_{k_i}}
\newcommand\tcckm{\tcckx{m}}
\newcommand\cCxy{\cC^{x,y}}
\newcommand\cCoy{\cC^{0,\xmax}}
\newcommand\bP{\mathbf{P}}
\newcommand\Poo{\PP}
 \newcommand\hPoo{\widehat\PP}
\renewcommand\bt{\mathbf t}
\newcommand\simeqx{\equiv}
\newcommand\tpi{\tilde\pi}
\newcommand\Pm{\kappa^{(\bm)}}
\newcommand\mnu{\nu}
\newcommand\gq{\zeta}
\newcommand\cXnm{\cX_{n,m}}
\newcommand\bx{\mathbf{x}}
\newcommand\bga{\boldsymbol{\ga}}
\newcommand\Ext{\operatorname{Ext}}
\newcommand\bxga{\overline{\bx-\bga}}
\newcommand\sfC{\mathsf{C}}
\newcommand\glx{{\widehat\gl}}
\newcommand\CXC{C_0}
\newcommand\gLxx[2]{\gL_{#1}(#2)}
\newcommand\sw{w}
\newcommand\Cat[1]{\mathit{Cat}_{#1}}
\newcommand\summmk{\sum_{|\bm|=m,s(\bm)=k}}
\newcommand\sC{\mathsf{C}}
\newcommand\fC{\mathfrak{C}}
\newcommand\bY{\mathbf Y}
\newcommand\by{\mathbf y}
\newcommand\GG{G}
\newcommand\hgl{\widehat\gl}
\newcommand\Xix{\widehat{\Xi}}
\newcommand\bbbN{\overline{\bbN}}
\newcommand{\sig}{\boldsymbol{\sigma}}
\newcommand{\bgs}{\sig}
\newcommand{\T}{\mathbf{T}}
\newcommand\Lmax{L_n}
\newcommand\Lmaxx{\Lmax}
\newcommand\Linf{L^\bullet}
\newcommand\xmax{y}
\newcommand\Mmax{M}
\newcommand\Interv[1]{[\frac {#1} \Mmax \Lmax,\frac {#1+1} \Mmax
    \Lmax)}
\newcommand\BigInterv[1]{\Bigl[\frac {#1} \Mmax \Lmax,\frac {#1+1} \Mmax
    \Lmax\Bigr)}
\newcommand\setM{\mathcal{M}}
\newcommand{\Bicx}{\operatorname{Bic}}
\newcommand{\Bic}{\mathcal B}
\newcommand{\PU}{\mathcal P}
\begin{document}

\begin{abstract} 

We study uniformly random maps with a single face,  genus $g$, and size
$n$, as $n,g\rightarrow \infty$ with $g=o(n)$, in continuation of several
previous works on the geometric properties of ``high genus maps". 
We calculate the number of short simple cycles, 
and we show convergence of their lengths
(after a well-chosen rescaling of the graph distance)
to a Poisson process,
which happens to be exactly the same as the limit law obtained by Mirzakhani and
Petri (2019)  when they studied simple closed geodesics on random hyperbolic
surfaces under the Weil--Petersson measure as $g\rightarrow \infty$. 

This leads us to conjecture that these two models are somehow ``the same" in
the limit, which would allow to translate problems on hyperbolic surfaces in
terms of random trees, thanks to a powerful bijection of Chapuy, Féray and
Fusy (2013).
\end{abstract}

\maketitle

\section{Introduction}

\subsection{Combinatorial maps.} Maps are defined as gluings of polygons forming a (compact, connected, oriented) surface. They have been studied extensively in the past 60 years, especially in the case of planar maps, i.e., maps of the sphere. They were first approached from the combinatorial point of view, both enumeratively, starting with \cite{Tut63}, and bijectively, starting with \cite{Sch98these}.

More recently, relying on previous combinatorial results, geometric properties of large random maps have been studied. More precisely, one can study the geometry of random maps picked uniformly in certain classes, as their size tends to infinity. In the case of planar maps, this culminated in the identification of two types of ``limits" (for two well defined topologies on the set of planar maps): the local limit (the \emph{UIPT}\footnote{In the case of triangulations, i.e., maps made out of triangles.} \cite{AS03}) and the scaling limit (the \emph{Brownian map} \cite{LG11, Mie11}).

All these works have been extended to maps with a fixed genus $g>0$ \cite{BC86,CMS09,Bet16}.

\subsection{High genus maps.} 
Very recently, another regime has been studied: \emph{high genus maps} are defined as (sequences of) maps whose genus grow linearly in the size of the map. They have a negative average discrete curvature, and can therefore be considered as a discrete model of hyperbolic geometry.
Their geometric properties have been studied, first on unicellular maps \cite{ACCR13,Ray13a,Lou21,SJ358} (i.e., maps with one face), and shortly after on more general models of maps \cite{BL19,BL20,Lou20}.

\subsection{Our results}

While all these works focuses on the regime where $g$ grows linearly in $n$,
we are here interested in the slightly different regime where $g\to\infty$
but $g=o(n)$. We will study the distribution of lengths of simple cycles in unicellular maps (which we studied in the ``linear genus regime" in a previous work \cite{SJ358}). The main interest here is that, with the right rescaling of the graph distance, our result matches exactly a result of Mirzakhani and Petri \cite{MP19} on random hyperbolic surfaces, which leads us to conjecture that these random hyperbolic surfaces can in some sense be approximated by unicellular maps (see Section~\ref{sec_conj} for more details).

Let $\u$ be a uniform unicellular map of genus $g$ and size $n$, and set
\begin{align}\label{ellen}
  \Lmax:=\sqrt{\frac{n}{12 g}},
\end{align}
which will turn out to be the typical
order of the size of the smallest cycles.

\begin{theorem}\label{TPP}
  Suppose that \ntoo{} and that $g=g_n\to\infty$ with $g=o(n)$.
Let $\set{\zeta_i}$ be the set of simple cycles in $\u$, and consider the
(multi)set of their lengths $Z_i:=|\zeta_i|$, scaled as
$\Xi_n:=\bigset{Z_i/\Lmax}
=\bigset{(12g/n)\qq Z_i}$. 
Then the random set\/ $\Xi_n$,
regarded as a point process on $\ooo$, converges in distribution
to a Poisson process on $\ooo$ with intensity
$\xpfrac{\cosh t-1}{t}$.
\end{theorem}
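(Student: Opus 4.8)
The plan is to use the Chapuy--Féray--Fusy (CFF) bijection, which encodes a unicellular map of genus $g$ and size $n$ as a plane tree on $n$ edges together with a ``C-decoration'': a partition of $2g$ of the tree's corners into blocks of odd size $\ge 3$, each block cyclically glued. Under this bijection, a simple cycle of the unicellular map corresponds (at least for the short cycles that contribute in the limit) to a cycle that passes through exactly one gluing block, entering and leaving along tree paths; its length is essentially the sum of the lengths of two tree branches joining corners identified by the decoration, plus lower-order corrections. So the first step is to recall this correspondence precisely and to reduce the computation of $\Xi_n$ to a statement about distances between randomly identified leaves/corners in a large random plane tree, exactly as was done in the companion paper \cite{SJ358} for the linear-genus regime. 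The key difference is the scaling: here $g=o(n)$, so the relevant tree distances are of order $\Lmax=\sqrt{n/(12g)}\to\infty$ rather than $O(1)$, and the number of blocks is $o(n)$, which should actually simplify the combinatorics since distinct short cycles become asymptotically ``independent''.

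Next I would compute, for each fixed $k\ge 1$, the expected number of simple cycles of length $\le t\Lmax$ coming from a block of size $2k+1$, and show that the total (summed over $k$) converges to $\int_0^t \frac{\cosh s - 1}{s}\,ds$. Concretely: a block of size $2k+1$ is a choice of $2k+1$ corners of the tree glued in a cyclic order; the resulting map-cycles through that block have lengths governed by the pairwise graph distances in the tree between consecutively-glued corners. Since a uniform plane tree on $n$ edges, rescaled by $\sqrt n$, converges to the Brownian CRT, the distance between two uniformly chosen points is of order $\sqrt n = \Lmax\sqrt{12g}$, which is much larger than $\Lmax$ unless the two corners are close; thus the dominant contribution to short cycles comes from blocks whose glued corners are within $O(\Lmax)$ of each other in the tree. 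A local/annealed computation — using that the number of pairs of corners at tree-distance $\approx \ell$ is $\sim n\cdot(\text{something})$ and that there are $\binom{n}{2k+1}$-ish ways to choose a block, weighted by the known asymptotics of the number of unicellular maps (equivalently, the asymptotics of the C-decoration count, which is governed by $\epsilon_k$-type constants and ultimately by the generating function $\sum_k \frac{t^{2k}}{(2k+1)!}=\frac{\sinh t}{t}$, whose relation to $\frac{\cosh t -1}{t}$ via $\frac{d}{dt}$ is exactly the source of the intensity) — should yield $\E\bigl[\#\{\text{cycles of length}\le t\Lmax \text{ from }(2k+1)\text{-blocks}\}\bigr]\to c_k(t)$ with $\sum_k c_k(t)=\int_0^t\frac{\cosh s-1}{s}ds$. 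The factor $1/t$ in the intensity is the usual ``cyclic symmetry'' factor one divides by when counting cycles rather than rooted-and-oriented cycles, precisely as in \cite{MP19}.

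Then, to upgrade convergence of the first moment (equivalently, of $\E$ of the counting function of $\Xi_n$) to convergence in distribution of the whole point process to a Poisson process, I would use the method of factorial moments: show that for disjoint intervals $I_1,\dots,I_r\subset\ooo$ the joint factorial moments $\E\bigl[\prod_j (N_n(I_j))_{(a_j)}\bigr]$ converge to $\prod_j \mu(I_j)^{a_j}$, where $\mu$ is the candidate intensity measure and $N_n(I)=\#(\Xi_n\cap I)$. Combinatorially this means counting ordered tuples of short cycles that are forced to be ``genuinely distinct'', which under the CFF bijection means they use disjoint (or at worst tree-path-disjoint) blocks; because the number of blocks is $o(n)$ and each involves only $O(1)$ corners among $n$, the choices decouple and the joint moment factorizes in the limit. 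One must also rule out the ``bad'' configurations where two short map-cycles share a block or where a single block produces more than a bounded number of short cycles, and control cycles that are not simple / not of the single-block type — these should be shown to be negligible by a union-bound / first-moment estimate, leveraging that multi-block cycles are necessarily longer (their length is a sum of $\ge 2$ independent-ish tree distances, hence $\gg \Lmax$ with high probability).

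The main obstacle I anticipate is the second point: getting a clean asymptotic for the annealed count $\E[N_n([0,t])]$ uniformly in $k$ and controlling the tail in $k$. This requires (i) sharp asymptotics for the number of C-decorations / unicellular maps with a prescribed local structure around a short cycle — essentially a ``pointed'' or ``rooted at a short cycle'' enumeration, which in the linear-genus regime \cite{SJ358} was delicate — now re-derived in the $g=o(n)$ regime where some terms that were comparable before become negligible; and (ii) a uniform bound showing the contribution of $(2k+1)$-blocks decays fast enough in $k$ that $\sum_k c_k(t)$ converges and equals the stated integral, i.e.\ matching $\sum_{k\ge1}(\text{coefficient})= \frac{\cosh t - 1}{t}$ after integration. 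Everything else (tree-distance asymptotics via the CRT or via direct generating-function/Lagrange-inversion computations, the factorial-moment criterion for Poisson convergence, negligibility of non-simple and multi-block cycles) is, while technical, essentially a transposition of the machinery already developed in \cite{SJ358}, so I expect the novelty and the effort to concentrate on re-running that machinery with the new scaling $\Lmax=\sqrt{n/(12g)}$ and verifying that the limit constants reorganize into the Mirzakhani--Petri intensity $\frac{\cosh t - 1}{t}$.
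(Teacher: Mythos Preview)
Your outline has a genuine structural error in how cycles arise under the CFF correspondence, and this error would lead you to the wrong intensity.

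In the paper's setup (Section~5.1), a simple cycle of the underlying graph of $(T,\sigma)$ decomposes as a list $(p_1,\dots,p_k)$ of $k\ge1$ vertex-disjoint paths in $T$ such that $\mathrm{end}(p_i)$ and $\mathrm{start}(p_{i+1})$ lie in the same cycle of the C-permutation $\sigma$, for each $i$ cyclically. The parameter that matters is $k$, the number of paths, \emph{not} the size of any single permutation cycle. The analysis of $\sigma$ (Section~3) shows that in the regime $g=o(n)$ almost all permutation cycles have length~$3$ --- the expected number of $(2j{+}1)$-cycles is $O(g^j/n^{j-1})=o(g)$ for $j\ge2$ --- and that the dominant event is that the $k$ required identifications use $k$ \emph{distinct} permutation cycles (Lemma~3.4 gives $\mathbb{P}\sim(6g/n^2)^k$). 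The contribution of $k$-path cycles to the rescaled length process has intensity $t^{2k-1}/(2k)!$ (Lemma~5.5), and it is the sum over all $k\ge1$ that produces
\[
\sum_{k\ge1}\frac{t^{2k-1}}{(2k)!}=\frac{\cosh t-1}{t}.
\]

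Your picture is essentially the opposite: you assume a short cycle comes from a \emph{single} $(2k{+}1)$-block and that ``multi-block cycles are necessarily longer \dots\ $\gg L_n$ with high probability''. Both claims are false here. Large blocks are rare, so a single-block analysis captures only 3-blocks and hence only $k=1$, giving intensity $t/2$ rather than $(\cosh t-1)/t$. And multi-block cycles are \emph{not} automatically long: a $k$-path configuration has total length $\sum_i|p_i|$, and while any \emph{fixed} pair of glued vertices is typically $\Theta(\sqrt n)$ apart, there are $\Theta(g^k)$ ordered $k$-tuples of 3-blocks to choose from, which exactly compensates. This is precisely the computation behind Lemmas~5.1 and~5.2. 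Your $\sinh t/t$ heuristic, tied to block sizes, does not produce the correct limit.

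The factorial-moment strategy for Poisson convergence and the ``bad configurations are negligible'' step are indeed what the paper does (Lemma~5.2, Proposition~5.7), but they have to be run on the correct decomposition by number of paths $k$, with a separate tail bound in $k$ (Lemma~5.6) before summing.
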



For a background on point processes,
see \eg{} \cite[Chapter 12 and 16]{Kallenberg} or
\cite{Kallenberg-rm}.
The convergence to a Poisson process in \refT{TPP} can be expressed in
several, equivalent forms.
One equivalent version is the following, 
stated similarly to the main result 
of Mirzakhani and Petri \cite{MP19}.

\begin{theorem}\label{thm_main}
Let $\mathcal C_n^{x,y}$ be the number of simple cycles of\/ $\u$ whose
length belongs 
to $[x\Lmax,y\Lmax]$. 
Then, for every finite set of disjoint intervals 
$[x_1,y_1]$, $[x_2,y_2]$,\dots,$[x_k,y_k]$, the random variables $\mathcal
C_n^{x_i,y_i}$ converge in distribution,
as $\ntoo$, to independent
Poisson variables with parameters $\lambda(x_i,y_i)$ 
where
\begin{equation}\label{lambdaxy}
\lambda(x,y)= \int_{x}^{y} \frac{\cosh t -1}t \dd t.
\end{equation}
\end{theorem}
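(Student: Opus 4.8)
The plan is to derive \refT{thm_main} from \refT{TPP} (or, more realistically, to prove both simultaneously by the method of moments) using the combinatorial machinery for counting simple cycles in unicellular maps developed in our earlier work \cite{SJ358}. The starting point is the Chapuy--F\'eray--Fusy bijection, which encodes a unicellular map of genus $g$ and size $n$ as a plane tree on $n$ edges equipped with a ``C-decoration'' distributing $2g$ of its $n+1$ corners into cycles of odd length $\ge 3$. Under this encoding, a simple cycle of the map of combinatorial length $\ell$ corresponds to a certain local configuration in the tree together with a block of the C-permutation, and the relevant quantity is $\cC_k^{[m]}$, the number of simple cycles whose associated block has $2k+1$ elements and whose length is some $m$; summing over $m$ in a window $[x\Lmax,y\Lmax]$ gives $\cC_n^{x,y}$.

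The key steps, in order, would be: (i) Recall from \cite{SJ358} the exact/asymptotic formula for the joint factorial moments of the cycle-counting variables: for disjoint intervals, $\E\bigl[\prod_i (\cC_n^{x_i,y_i})\fall{r_i}\bigr]$ factorizes asymptotically into $\prod_i \E[\cC_n^{x_i,y_i}]^{r_i}$, which is the hallmark of convergence to independent Poisson variables (by the standard moment criterion, see \eg{} the method of moments for point-process convergence). This reduces everything to computing the first moment $\E[\cC_n^{x,y}]$ and checking that higher joint factorial moments factorize. (ii) Compute $\E[\cC_n^{x,y}]$ in the regime $g\to\infty$, $g=o(n)$. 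Here the scaling $\Lmax=\sqrt{n/12g}$ enters: a cycle of combinatorial length $\ell$ arising from a block of size $2k+1$ contributes, after the tree-counting (Catalan-type) asymptotics and the counting of C-decorations, a weight that behaves like $(\ell/\Lmax^{?})$-type terms, and summing the contribution over the number $2k+1$ of block elements produces the series $\sum_{k\ge1} \frac{(\cdot)^{2k}}{(2k+1)!}$, i.e., exactly $\frac{\cosh t - 1}{t}$ after passing $\ell = t\Lmax$ to a Riemann-sum limit $\int_x^y \frac{\cosh t-1}{t}\dd t$. The condition $g=o(n)$ is what makes the ``one cycle at a time'' picture exact in the limit (cycles are asymptotically vertex-disjoint and the tree looks locally like a critical Galton--Watson tree), and $g\to\infty$ is what sends $\Lmax\to\infty$ so that the sum over $\ell$ becomes an integral rather than a discrete sum --- this is precisely the difference from the linear-genus regime of \cite{SJ358}.

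The main obstacle I expect is step (ii): controlling the asymptotics of the relevant sums uniformly enough to pass from the exact combinatorial formula to the integral $\int_x^y \frac{\cosh t - 1}{t}\dd t$, in particular handling the interaction between the three scales $n$, $g$, and $\ell\asymp\Lmax$, and making sure error terms (from the subleading Catalan asymptotics, from Stirling in the C-permutation count, and from the ``cycles may share vertices'' corrections) are $o(1)$ throughout the whole range $g=o(n)$, not just for $g$ of a fixed power of $n$. A secondary but routine obstacle is the equivalence between the point-process formulation of \refT{TPP} and the ``independent Poisson on finitely many intervals'' formulation of \refT{thm_main}: a Poisson process on $\ooo$ with a locally integrable intensity $\mu$ is characterized by the fact that its counts on disjoint Borel sets are independent Poisson with the corresponding $\mu$-masses, and $\lambda(x,y)$ in \eqref{lambdaxy} is exactly the $\mu$-mass of $[x,y]$ for $\dd\mu(t)=\frac{\cosh t-1}{t}\dd t$; since the intensity has no atoms, whether the intervals are taken closed, open, or half-open is immaterial. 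So once \refT{TPP} is established, \refT{thm_main} follows immediately, and conversely the finite-dimensional convergence in \refT{thm_main} together with tightness (automatic here, since the limiting intensity is locally finite) yields \refT{TPP}; I would therefore prove the factorial-moment estimates once and read off both theorems.
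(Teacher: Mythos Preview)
Your high-level strategy --- reduce via Chapuy--F\'eray--Fusy to C-decorated trees, compute joint factorial moments of the cycle counts, show they factorize, and deduce joint Poisson convergence by the method of moments --- is exactly what the paper does, and your remark that \refT{TPP} and \refT{thm_main} are equivalent reformulations (atomless intensity, so endpoints don't matter) is correct and is how the paper packages the conclusion.

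However, your description of how a simple cycle of the map arises from the C-tree $(T,\sigma)$ is wrong in a way that would derail the computation. A simple cycle does \emph{not} correspond to ``a block of the C-permutation'' of size $2k+1$. Rather (see conditions \ref{C1}--\ref{C3} in \refS{Sctrees}, or the same setup in \cite{SJ358}), it corresponds to a cyclically ordered list $(p_1,\dots,p_k)$ of $k$ vertex-disjoint paths in $T$ such that $\eend(p_i)$ and $\start(p_{i+1})$ lie in the same cycle of $\sigma$ for each $i$; generically these are $k$ \emph{distinct} $\sigma$-cycles. So the index $k$ is the number of tree-paths (equivalently, the number of $\sigma$-cycles touched), not the half-size of a single block, and the per-$k$ intensity is $t^{2k-1}/(2k)!$, summing to $(\cosh t-1)/t=\sum_{k\ge1}t^{2k-1}/(2k)!$, not your $\sum_{k\ge1}t^{2k}/(2k+1)!$. (Relatedly, $\sigma$ is a permutation of the $n+1$ \emph{vertices} with $n+1-2g$ odd cycles, not a partial structure on $2g$ corners.) With the correct picture, the first-moment computation factorizes into two independent pieces: the number of lists of $k$ disjoint paths of prescribed lengths in a random plane tree (\refS{Spaths}), and the probability that $k$ prescribed pairs of vertices are each matched by a distinct $\sigma$-cycle, which is $\sim(6g/n^2)^k$ (\refL{LXC}). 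This is exactly where the $1/(2k)!$ and the scaling $L_n=\sqrt{n/12g}$ come from.

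Two technical devices the paper uses that you should anticipate, since they address precisely the uniformity issue you flag as the main obstacle: (i) it conditions on the tree, introducing a deterministic notion of a ``good sequence of trees'' (properties \eqref{good_paths}--\eqref{good_unionpaths}) for which the factorial-moment argument goes through cleanly, and then shows (\refL{Lgood}, via Skorohod coupling) that $\T_n$ is a.s.\ good; (ii) it discretizes lengths into bins of width $L_n/M$ for a fixed integer $M$, proves Poisson convergence for the binned counts, and only at the end lets $M\to\infty$ (\refSS{SSMoo}) to pass to the continuous intensity. A separate tail bound (\refL{Lbig}) controls the contribution of cycles with large $k$.
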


For comparison, we state the theorem by 
Mirzakhani and Petri \cite{MP19}.%
\footnote{The theorem in \cite{MP19} is stated for primitive closed
  geodesics, but it follows from the proof there that 
whp every primitive closed geodesic with length $\le C$ is simple, 
and thus the same result holds for simple closed geodesics.
The same holds in our \refT{thm_main}, see
\refR{Rprimitive}.}
(See Section~\ref{sec_conj} and the references there for definitions.) 

\begin{theorem}\label{thm_MP}[Mirzakhani--Petri \cite{MP19}]
Let $\mathcal {\widehat {C}}_g^{x,y}$ be the number of simple closed
geodesics in the random hyperbolic surface $\mathbf S_g$ 
whose lengths belong
to $[x,y]$. 
Then, for every finite set of disjoint intervals 
$[x_1,y_1], [x_2,y_2],\dots,\allowbreak[x_k,y_k]$, the random variables 
$\mathcal{\widehat{C}}_g^{x_i,y_i}$ converge jointly in distribution, 
as $g\rightarrow\infty$, to independent Poisson variables with parameters $\lambda(x_i,y_i)$
where
$\gl(x,y)$ is given by \eqref{lambdaxy}.
\end{theorem}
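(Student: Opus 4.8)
The plan is to compute all joint factorial moments of the counting variables $\mathcal{\widehat{C}}_g^{x_i,y_i}$ and show that, as $g\to\infty$, they converge to the corresponding moments of a vector of independent Poisson variables with the stated parameters; since such a vector is determined by its moments, this yields the asserted joint convergence in distribution. The main tool is Mirzakhani's integration formula, which rewrites the Weil--Petersson average over $\mathcal M_g$ of a function summed over the mapping class group orbit of a (multi)curve as an explicit integral involving Weil--Petersson volumes $V_{g',n'}(\cdot)$ of the surfaces obtained by cutting along that multicurve; the whole argument then rests on precise large-$g$ asymptotics for these volumes.

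First I would handle the first moment $\E\bigl[\mathcal{\widehat{C}}_g^{x,y}(\mathbf S_g)\bigr]$ for a single interval. A simple closed curve on a closed genus-$g$ surface is either non-separating or of one of finitely many separating topological types. For the non-separating type the complement is a connected surface of genus $g-1$ with two boundary components, and the integration formula contributes
\begin{equation*}
  \tfrac12\int_x^y \ell\,\frac{V_{g-1,2}(\ell,\ell)}{V_g}\dd\ell ,
\end{equation*}
the $\tfrac12$ coming from the involution exchanging the two new boundary components. Here one feeds in two asymptotic inputs on Weil--Petersson volumes (due to Mirzakhani and to Mirzakhani--Zograf): the ``sinh approximation'' $V_{g,n}(\ell_1,\dots,\ell_n)/V_{g,n}\to\prod_i \frac{\sinh(\ell_i/2)}{\ell_i/2}$ uniformly on compact sets, and $V_{g-1,2}/V_g\to1$, both as $g\to\infty$. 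Since $\bigl(\sinh(\ell/2)/(\ell/2)\bigr)^2 = 2(\cosh\ell-1)/\ell^2$, the displayed term tends to $\int_x^y \frac{\cosh\ell-1}{\ell}\dd\ell=\lambda(x,y)$. The separating types contribute $o(1)$: their complements have strictly smaller complexity $2g-3+n$, so by the Mirzakhani--Zograf growth asymptotics $V_{g,n}\asymp (2g-3+n)!\,(4\pi^2)^{2g-3+n}/\sqrt g$ their expected contributions vanish; the same type of estimate shows that whp every primitive closed geodesic of length at most any fixed constant is simple, which is the content of \refR{Rprimitive}.

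For the higher joint factorial moments I would count ordered tuples of distinct simple closed geodesics subject to the prescribed length constraints, i.e.\ sum over topological types of ordered multicurves $\gamma_1+\dots+\gamma_r$ and apply the integration formula to each. The dominant term is the ``generic'' type in which the $\gamma_i$ are pairwise disjoint, all non-separating, with connected complement (genus $g-r$ and $2r$ boundary components); using the sinh approximation together with $V_{g-r,2r}/V_g\to1$ this term factorizes asymptotically into $\prod_i \lambda(x_i,y_i)^{k_i}$, exactly the factorial moments of the target independent Poisson vector. Every other type (some component separating, the union disconnecting the surface, two components cobounding a subsurface, a component non-simple or non-primitive, etc.) has a complement, or supporting subsurface, of strictly smaller total complexity, and hence contributes $o(1)$ by the same volume estimates. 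I expect this last point to be the main obstacle: obtaining clean, uniform control of the Weil--Petersson volume ratios attached to \emph{all} non-generic multicurve types, which is exactly what forces one through the delicate Mirzakhani--Zograf asymptotics rather than the sinh approximation alone. Once every joint factorial moment has been matched with its Poisson counterpart, the method of moments for point processes completes the proof.
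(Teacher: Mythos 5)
The paper does not prove this statement: Theorem~\ref{thm_MP} is quoted verbatim from Mirzakhani--Petri~\cite{MP19} purely for comparison with Theorem~\ref{thm_main}; all of the paper's own proofs concern the unicellular-map side (Sections~\ref{Sperm}--\ref{Sctrees}). So there is no internal argument to compare your attempt against.

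As a summary of the \emph{original} Mirzakhani--Petri proof, your sketch is faithful and essentially correct. You identify the right framework (method of factorial moments via Mirzakhani's integration formula), the right asymptotic inputs (the uniform ``sinh approximation'' $V_{g,n}(\ell_1,\dots,\ell_n)/V_{g,n}\to\prod_i \sinh(\ell_i/2)/(\ell_i/2)$ and the Mirzakhani--Zograf volume ratios), the right dominant configuration (an ordered tuple of $r$ pairwise disjoint non-separating curves with connected complement of genus $g-r$ and $2r$ boundary components, for which $V_{g-r,2r}/V_g\to1$), and the right reason the remaining topological types are negligible (strictly smaller total complexity $2g-3+n$, hence a vanishing volume ratio). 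The algebra $\bigl(\sinh(\ell/2)/(\ell/2)\bigr)^2 = 2(\cosh\ell-1)/\ell^2$ and the factor $\tfrac12$ for the boundary-swapping symmetry are also correct, yielding $\lambda(x,y)$. The one place you wave your hands is exactly where the real work in \cite{MP19} lies: obtaining uniform control of the volume ratios attached to \emph{all} non-generic (separating, disconnecting, cobounding, non-simple) multicurve types, which you flag but do not carry out; and, as a minor point, the appeal to ``method of moments for point processes'' should be spelled out as: a finite vector of Poisson variables with finite means is determined by its joint factorial moments, so convergence of all joint factorial moments to those of the target implies convergence in distribution.
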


Another equivalent version of \refT{TPP} is that if we order the cycles
according to 
increasing length, so that $Z_1\le Z_2\le\dots$, and extend the 
sequence $(Z_1,Z_2,\dots)$ to an infinite one by adding a tail
$\infty,\infty,\dots$, 
then the resulting sequence, after rescaling as above, converges to the
sequence of points in the 
(inhomogeneous) Poisson process defined above, in the usual product
topology on $\ooo^\infty$. (See, e.g., \cite[Lemma 4]{SJ136}.)
In particular, this yields the following corollary,
cf.\ \cite[Theorem 5.1]{MP19}.

\begin{corollary}\label{CPP}
  Let $Z_1^{(n)}$ be the length of the shortest cycle in $\u$.
Then, 
\begin{align}\label{cpp}
 Z_1^{(n)}/\Lmax \dto Z,   
\end{align}
where $Z$ is a random variable with the
distribution function
\begin{align}
  \P\xpar{Z\le z} = 1-\exp\Bigpar{-\int_0^z \frac{\cosh t-1}{t}\dd t},
\qquad z\ge0.
\end{align}
\end{corollary}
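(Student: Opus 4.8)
The plan is to deduce \refC{CPP} directly from \refT{TPP} (equivalently \refT{thm_main}), which I am allowed to assume. The key observation is that for any point process $\Pi$ on $\ooo$ and any $z\ge0$, the event $\{\Pi([0,z])=0\}$ is precisely the event that the smallest point of $\Pi$ exceeds $z$. So if I let $Z_1^{(n)}$ denote the length of the shortest simple cycle in $\u$, then the rescaled quantity $Z_1^{(n)}/\Lmax$ is exactly $\min \Xi_n$, the smallest point of the point process $\Xi_n$ from \refT{TPP}. Hence for every fixed $z\ge0$,
\begin{equation*}
  \P\bigpar{Z_1^{(n)}/\Lmax > z} = \P\bigpar{\Xi_n\cap[0,z]=\emptyset} = \P\bigpar{\cC_n^{0,z}=0},
\end{equation*}
where $\cC_n^{0,z}$ is the counting variable of \refT{thm_main}.

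First I would invoke \refT{thm_main} with the single interval $[0,z]$: the random variable $\cC_n^{0,z}$ converges in distribution, as $\ntoo$, to a Poisson variable with parameter $\gl(0,z)=\int_0^z\frac{\cosh t-1}{t}\dd t$. Since the limiting distribution is a fixed (non-random) Poisson law, convergence in distribution of the integer-valued variables $\cC_n^{0,z}$ implies in particular that $\P(\cC_n^{0,z}=0)\to e^{-\gl(0,z)}$. Therefore
\begin{equation*}
  \P\bigpar{Z_1^{(n)}/\Lmax > z} \longrightarrow \exp\Bigpar{-\int_0^z \frac{\cosh t-1}{t}\dd t}
\end{equation*}
for every $z\ge0$. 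Equivalently, $\P\bigpar{Z_1^{(n)}/\Lmax \le z}\to 1-\exp\bigpar{-\int_0^z\frac{\cosh t-1}{t}\dd t}=:F(z)$.

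It remains to note that $F$ is a genuine distribution function on $\ooo$: it is continuous (the integrand $\frac{\cosh t-1}{t}$ extends continuously to $t=0$ with value $0$, so the integral is a continuous, indeed $C^1$, increasing function of $z$), $F(0)=0$, and $F(z)\to 1$ as $z\to\infty$ because $\int_0^\infty\frac{\cosh t-1}{t}\dd t=\infty$ (the integrand grows like $\tfrac12 e^t/t$). Since $F$ is continuous everywhere, pointwise convergence of the distribution functions at every $z$ is exactly convergence in distribution, so $Z_1^{(n)}/\Lmax\dto Z$ where $Z$ has distribution function $F$, which is \eqref{cpp}. Alternatively, one can phrase the whole argument through the ``ordered sequence'' reformulation of \refT{TPP} mentioned just before the corollary: the first coordinate $Z_1$ of the rescaled ordered sequence converges in the product topology, and the law of the first point of the Poisson process with intensity $\frac{\cosh t-1}{t}$ is exactly $F$.

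There is essentially no obstacle here: the corollary is a soft consequence of the point-process convergence, and the only minor points to check are the continuity of the limiting distribution function (needed to upgrade pointwise convergence of c.d.f.'s to convergence in distribution) and the divergence of the integral at $+\infty$ (needed so that $Z$ is almost surely finite, i.e.\ that $\u$ does have a simple cycle of length $O(\Lmax)$ whp). Both are immediate from elementary properties of $\frac{\cosh t-1}{t}$. The real content lies entirely in \refT{TPP}/\refT{thm_main}, which are assumed.
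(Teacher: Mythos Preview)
Your proposal is correct and follows essentially the same route as the paper: the paper does not give a separate proof, but simply notes (just before the corollary) that the ordered-sequence reformulation of \refT{TPP} yields the result, with the first coordinate being the rescaled shortest cycle and the first point of the limiting Poisson process having distribution function $F$. Your argument via $\P(\cC_n^{0,z}=0)$ is the same idea written out explicitly (and you even mention the ordered-sequence phrasing as an alternative), so there is no meaningful difference.
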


In unicellular maps, all simple cycles are non-contractible; hence
$Z_1^{(n)}$ is the law of the \emph{systole} 
(the size of the smallest non-contractible cycle)
of $\u$.

\subsection{A conjecture}\label{sec_conj}

For $g\geq 2$, there is a natural way of defining a random hyperbolic surface $\mathbf{S}_g$ of genus $g$: the \emph{Weil--Petersson probability measure} (we refer to \cite{MP19} and references therein for more details). It is natural to try to understand the geometric behaviour of these random hyperbolic surfaces as $g\rightarrow\infty$, and this has been done rather extensively in the recent years \cite{GPY11,Mir13,MP19,Mon20,NWX20,Tho20,Wright20,PWX21,MT21,WX21,GLST21,LMS20}.

The similarity between the geometric behaviour of maps and hyperbolic surfaces had been noticed before, but in the precise regime considered in this paper, the ``numerical evidence" provided by Theorems~\ref{thm_main} and~\ref{thm_MP} leads us to conjecture that these two models are somehow ``the same" as $g\to\infty$. We do not know yet what the exact formulation should be, but two features seem to be necessary for this conjecture to be true: first, one should remove the ``fractal part" of the map, and consider the $2$-core of $\mathbf U_{n_g,g}$ instead (see Fig.~\ref{fig_core_decomp}). It is also important to consider $\text{$2$-core}(\mathbf U_{n_g,g})$ as a hyperbolic polygon whose sides were glued, and not just an embedded graph (see Section~\ref{rrrr} for a proper definition of unicellular maps as gluings of a polygon). We can now conjecture the following.

\begin{figure}
\center
\includegraphics[scale=0.5]{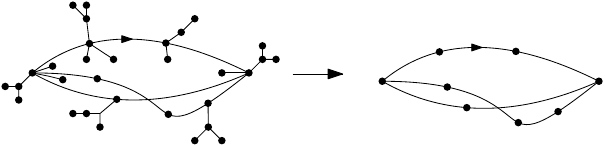}
\caption{A unicellular map and its $2$-core.}\label{fig_core_decomp}
\end{figure}

\begin{conjecture}\label{conj_WP}
Let $n_g$ be such that $g=o(n_g)$ as $g\rightarrow\infty$. 
Then,  $\mathbf U_{n_g,g}$, 
with distances rescaled by the factor $L_{n_g}\qw$, 
and $\mathbf S_g$ can be coupled such that 
\begin{align}
  d_{\text{GH}}\left(\text{$2$-core}(\mathbf U_{n_g,g}),\mathbf S_g\right)
\xrightarrow[g\rightarrow\infty]{} 0
\end{align}
in probability,
where $d_{\text{GH}}$ is the Gromov--Hausdorff distance between metric spaces.
\end{conjecture}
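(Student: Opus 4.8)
The plan is to treat Conjecture~\ref{conj_WP} as a three-stage program run in parallel on the two models: in both $\mathbf U_{n_g,g}$ (rescaled) and $\mathbf S_g$ one performs a thick--thin decomposition, reduces the geometry to a decorated ``skeleton'' that carries the long thin parts, matches the skeleta, and then reconstructs the global metric from the skeleton up to $o(1)$ Gromov--Hausdorff error. The engine on the combinatorial side is the Chapuy--F\'eray--Fusy (CFF) bijection, which encodes $\mathbf U_{n,g}$ by a uniform plane tree with $n$ edges together with a uniform gluing of its corners into $g$ handles.

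\emph{Stage 1: reduction to a metric skeleton.} Iteratively suppressing vertices of degree $\le 2$ in $\mathbf U_{n,g}$ yields a \emph{scheme}: a unicellular map of genus $g$ with only $O(g)$ edges and branch vertices, whose edges are subdivided by long simple paths carrying almost all of the $n$ edges. After rescaling distances by $L_n^{-1}=\sqrt{12g/n}$, the branch vertices collapse, and $\mathbf U_{n,g}$ becomes, up to $o(1)$ Gromov--Hausdorff error, a finite metric graph on $O(g)$ vertices whose edge lengths are exactly the rescaled simple-cycle lengths of \refT{TPP}. On the hyperbolic side one does the same with a short (Bers-type) pants decomposition of $\mathbf S_g$: the thin collars are long Euclidean-like cylinders, the $2g-2$ pants cores have bounded diameter and collapse after rescaling, and one is left with a finite metric graph --- the combinatorial pattern of the decomposition, with edge lengths the collar widths, which are determined by the cuff lengths.

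\emph{Stage 2: matching the skeleta and reconstructing the metric.} The edge-length spectra already agree: by \refT{thm_main} those of $\mathbf U_{n,g}$ converge to the Poisson process of intensity $(\cosh t-1)/t$, and by \refT{thm_MP} the collar widths of $\mathbf S_g$ converge to the \emph{same} process, so the two spectra can be coupled. One must then couple the underlying \emph{combinatorial} graphs, i.e.\ show that the random genus-$g$ scheme of $\mathbf U_{n,g}$ and the random graph of a short pants decomposition of $\mathbf S_g$ have the same limiting law --- in particular the same limiting local structure, a discrete counterpart of the Benjamini--Schramm limit $\mathbb H^2$ of $\mathbf S_g$, which is known on the hyperbolic side by work of Mirzakhani and Monk and should follow on the unicellular side from the explicit CFF description. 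Finally one shows that in each model the rescaled metric is, up to $o(1)$, \emph{reconstructed} from its skeleton-with-lengths: every distance is realised to within $o(1)$ by a path running along connectors and crossing only $O(1)$ junctions, with no unexpected short-cuts. Combined with the coupling of the skeleta this gives $d_{\text{GH}}(\mathbf U_{n_g,g},\mathbf S_g)\to 0$ in probability; since the diameters grow (of order $\log g$), it is cleanest to phrase everything via pointed/rooted Gromov--Hausdorff convergence together with a common precise diameter asymptotic.

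\emph{Main obstacle.} The genuinely hard points are the second half of Stage 2 and the reconstruction step. A short pants decomposition of $\mathbf S_g$ is not canonical (it need not be unique) and its Weil--Petersson law is far less explicit than the CFF law, so matching the two random combinatorial graphs seems to require new input on the hyperbolic side. The reconstruction step is delicate because matching the local limit and the short-cycle spectrum does not by itself force Gromov--Hausdorff closeness (two such spaces can still differ wildly in, e.g., diameter), so one must exploit model-specific quantitative rigidity --- a genuine control of the global metric by the skeleton data --- rather than hope for a soft argument. A realistic partial target, short of the full conjecture, is to prove that a large family of geometric functionals --- diameter, Cheeger constant, spectral gap, volume growth, and the counts of short cycles of every topological type --- have identical limits in the two models.
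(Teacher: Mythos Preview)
The statement you are attempting to prove is labelled \emph{Conjecture} in the paper, and the paper does not prove it; indeed, the paragraph immediately following the conjecture explicitly says that the statement may require adjustment (perhaps a different notion of distance, perhaps passing to the $2$-core) before it can even be true. So there is no ``paper's proof'' to compare against, and any honest submission here must be a research programme rather than a proof. Your write-up is in fact such a programme, and you correctly flag its own gaps under ``Main obstacle''. That is the right posture; what follows are the places where the outline, as written, is either imprecise or would actually fail.

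First, several identifications in Stage~1 are not correct as stated. The edge lengths of the scheme of $\mathbf U_{n,g}$ are \emph{not} the simple-cycle lengths appearing in \refT{TPP}; a simple cycle in the underlying graph is a concatenation of several scheme edges (this is exactly the decomposition \ref{C1}--\ref{C3} in \refS{Sctrees}), so the cycle spectrum is a derived functional of the scheme, not the scheme itself. Likewise on the hyperbolic side, \refT{thm_MP} is a statement about lengths of simple closed geodesics, not about collar widths of a pants decomposition; the two are related by an explicit function, but they are not the same Poisson process, and a Bers pants decomposition is neither canonical nor known to have Weil--Petersson-distributed cuff lengths matching that process. So the sentence ``the edge-length spectra already agree'' is not justified by the theorems you cite.

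Second, and more seriously, even a perfect coupling of the short-cycle point processes is very far from Gromov--Hausdorff closeness. Both models are believed to have diameter of order $\log g$ (this is \refP{open_diam} in the paper, itself open), so $d_{\text{GH}}\to 0$ is an extremely strong global statement: it requires matching \emph{all} distances, not just the local picture and the bottom of the length spectrum. Your own closing remark, that one should retreat to pointed Gromov--Hausdorff convergence plus a common diameter asymptotic, is not a technical convenience but a genuine weakening of the conjecture; it does not yield $d_{\text{GH}}\to 0$. The ``reconstruction step'' you describe---that the global metric is determined up to $o(1)$ by the skeleton with edge lengths---is precisely the content of the conjecture and is not reduced by anything earlier in the outline. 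In short: your proposal is a reasonable sketch of what a proof \emph{might} look like and correctly isolates the hard part, but it is not a proof, and the paper does not claim one either.
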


There may be small adjustments to make to this conjecture: for instance, maybe a different notion of distance than Gromov--Hausdorff is needed, and on the other hand, we can hope for something stronger that accounts for the topology, e.g. separating curves. Furthermore, there might be a slightly simpler equivalent combinatorial model (for instance, cubic unicellular maps with random edge lengths). We leave these questions open for now.

We believe the conjecture it is an interesting question for two
reasons. First, it 
would reinforce the ``universality" principle in two-dimensional geometry
(i.e., different models behave the same). And what's more, if this
conjecture is true, any geometric property that we prove on our model of
unicellular maps would hold for hyperbolic surfaces in large genus. But
unicellular maps are easier to work with especially because they are in
bijection with a certain model of trees \cite{CFF13}, see \refSS{SScperm} below.
Therefore, Conjecture~\ref{conj_WP}, if true, would allow us
to transfer any geometric problem
on hyperbolic surfaces onto a problem on \emph{random trees}, which are very
well understood.

There are several open questions remaining for random
hyperbolic surfaces, and the conjecture above (whether true or not) 
suggests studying 
the corresponding problems
on $\mathbf U_{n_g,g}$.
Perhaps the most natural such question is about the diameter:
\begin{open}\label{open_diam}
What is the diameter of $\mathbf U_{n_g,g}$ (rescaled by $L_{n_g}\qw$ as above) ?
\end{open}

For the diameter of random hyperbolic surfaces, a simple area argument gives a
deterministic lower bound of $(1+o(1))\log g$,
while, so far, the best upper bound is $(4+o(1))\log g$ whp,
which can be derived from an
inequality linking the diameter to the spectral gap 
(see \cite{Mag}, combined with \cite{WX21,LW21});
it is natural to try and find the ``right constant" in front of $\log g$.

Several spectral properties of $\mathbf{S}_g$ are still open problems, and might be more tractable on $\mathbf U_{n_g,g}$ (and the associated model of random trees):
\begin{open}\label{open_spectral}
Study the spectral gap, the Cheeger constant and Laplacian eigenfunctions of $\mathbf U_{n_g,g}$.
\end{open}

\subsection{Structure of the paper}
We will end this section with an index of notations, and we will give some
definitions in \refS{sec_def}. In \refS{Sperm}, we prove some results about
\cperm{s}, and in \refS{Spaths}, we study the number of occurrences of paths in
uniformly random trees.  We use these results in \refS{Sctrees} to calculate
the law of cycles in unicellular maps.

\begin{acks}

We are grateful to Bram Petri and Stephan Wagner for enlightening discussions. We also thank Thomas Budzinski, Nicolas Curien and Yunhui Wu for comments on the first version of this paper.
\end{acks}

\subsection*{Index of notations}
(Not including some that are only used locally.)
\begin{itemize}
\item $g=g_n$: the genus of the map. (We assume $1\ll g_n \ll n$.)
\item $\u$: a uniformly random unicellular map of genus $g$ and size $n$.
\item $\mathbf T=\mathbf T_n$: a uniformly random tree of size $n$.
\item $(T_n)_{n\geq 1}$:  a deterministic sequence of trees.
\item   $\Lmax:=\sqrt{\frac{n}{12 g}}$: the scaling factor for the graph
  distance. 
\item $\Mmax$: a large integer. (Usually fixed.)
\item $\fsc_{n,m}$: the set of \cperm{s} on $n$ elements and $m$  cycles.
\item $\sig$: a uniformly random element in $\fsc_{n+1,n+1-2g}$. (Depends thus implicitly on $n$.)
\item $T$:  a fixed rooted tree.
\item $\bt$:  a fixed rooted tree.
\item $N_\bt(T)$: the number of occurrences of $\bt$ in $T$.
\item $P_i(T)$: the number of paths of length $\ell\in\Interv{i}$ in $T$.
\item $\bm$:  a finite sequence of non-negative integers.
\item $\bP$:  a list of pairwise (vertex) disjoint paths.
\item $s(\bP)$: the number of paths in $\bP$.
\item $\ell(\bP)$: the total length of the paths in $\bP$.
\item $\Poo(T)$, $\PPm(T)$, $\PPkm(T)$
$\CCkm(T,\gs)$, $\CCkab(T,\gs)$, $\tCCkm(T,\gs)$
: 
 sets of lists of  disjoint paths in $T$.
\item $\pM(T)$, $\ppkm(T)$,
$\cckm(T,\gs)$, $\cckab(T,\gs)$, $\tcckm(T,\gs)$:
cardinalities of these sets.

\item $\Pm$: the constant \eqref{eq_Pm}.
\end{itemize}

\section{Definitions and notations}\label{sec_def}
\subsection{Parameters} \label{SSparam}
We will discretize our problem in order to be able
to reason on a finite number of quantities.
For most of the proof,
we will fix a (large) integer $\Mmax>0$.
Only in \refSS{SSMoo} we will let $M\to\infty$, which eventually will yield
our final results. For notational convenience, we will usually omit $n$ and
$M$ from the notation when there is no risk of confusion, but it should be
remembered that most variables introduced below depend on both $n$ and $M$.

Recall that $\Lmax$ was defined in \eqref{ellen}.
Note that, by our assumptions on $g=g_n$, we have $\Lmax\to\infty$ and
$\Lmax=o\bigpar{n\qq}$. 
We define also
\begin{align}\label{Linf}
\Linf&:= (\log g) \Lmax.
\end{align}
The exact definition is not important; we will only use the properties
$\Lmax \ll \Linf \ll n\qq$ as \ntoo.

\subsection{Paths, cycles and trees}

By a path $p$, we mean a simple path, i.e., a list of $\ell+1$ distinct vertices
$v_0,\dots,v_\ell$ and $\ell\ge1$ edges $v_{i-1}v_i$, 
where $\ell$ is the \emph{length} or $\emph{size}$ of the path, denoted
$|p|$. (Note that we require $|p|>0$.)
All our paths are \emph{oriented}, i.e., they have a start $\start(p)=v_0$
and an end $\eend(p)=v_\ell$, which together are the \emph{endpoints}
$\Ext(p):=\set{\start(p),\eend(p)}$.

Similarly, a cycle  means  a simple cycle, i.e., a set of $\ell$ distinct
vertices $v_1,\dots,v_\ell$ and $\ell\ge2$ edges $v_iv_{i+1}$ (where $v_{\ell+1}$
is interpreted as $v_1$), where $\ell$ is the \emph{length} or
\emph{size} of the cycle.
Our cycles are {unoriented}, and they do not
have any designated starting point; thus the vertices 
$v_1,\dots,v_\ell$ can be ordered in $2\ell$ different ways yielding the
same cycle.

Our trees will be plane trees, i.e., trees embedded in the plane (up to
obvious isomorphism). The size $|\bt|$ of a tree $\bt$ is its number of edges.
At each vertex $v$ of $\bt$, the gaps between two adjacent edges are called
\emph{corners}; thus, there are $d$ corners at a vertex of degree $d$, and
hence in total $2|\bt|$ corners in a tree $\bt$.

Our trees are usually rooted; the root of a tree is a corner.
(This is equivalent to the slightly different definition of rooted plane trees
in \eg{} \cite[Section 1.1.2]{Drmota}.)

We emphasize that the size of a path, cycle or tree is its number of edges.

Let $T$ be a rooted tree. For any tree $\bt$, let $N_\bt(T)$ be the number
of occurrences of $\bt$ in $T$.
Furthermore, let
\begin{equation}\label{Pit}
P_i(T):=\sum_\bt N_\bt(T),
\qquad i\ge0,
\end{equation}
where the sum spans over all paths of size belonging to $\Interv{i}$.
(See \refSS{SSparam} for the (implicit) parameter $M$ and $\Lmax$.)

We denote by $\cT_n$ the set of rooted plane trees of size $n$, and by
$\T=\mathbf{T}_n$ a uniformly random element of $\cT_n$.

\subsection{Lists of paths}

Given a rooted plane tree $T$, 
let $\Poo(T)$ be the set of all lists $\bP=(p_1,\dots,p_k)$ of pairwise
vertex
disjoint paths in $T$, of arbitrary length $k\ge1$.
For a list $\bP=(p_1,\dots,p_k)\in\Poo(T)$, let $s(\bP):=k$, the number of
paths in the list, and $\ell(\bP):=\sum_1^k|p_i|$, their total length.
Also, let
$\Ext(\bP):=\bigcup_i\Ext(p_i)=\set{\start(p_i),\eend(p_i):i=1,\dots,k}$, the
set of endpoints of the paths in $\bP$; note that $|\Ext(\bP)|=2s(\bP)$
since the paths are disjoint.


Furthermore, 
let $\setM$ be the set of all (non-empty)
finite sequences of non-negative integers.
If
$\mathbf{m}=(m_1,\ldots,m_k)\in\setM$,
we write $|\mathbf{m}|=m_1+m_2+\ldots+m_k$ and $s(\mathbf{m})=k\ge1$. 
We define
\begin{align}\label{PPm}
\PPm(T)&:=
\Bigset{\bP=(p_1,\dots,p_k)\in\Poo(T): |p_i|\in\BigInterv{m_i} \, \forall i},
\\
  \PPkm(T)&:=\bigcup_{|\m|=m\atop s(\m)=k} \PPm(T). \label{PPkm}
\end{align}
We let $\pM(T):=|\PPm(T)|$ 
and
$\ppkm(T):=|\PPkm(T)|$
be the cardinalities of these sets of lists.

Note that it follows from the definition \eqref{PPm} that if
$\bP\in\PPm(T)$, then
\begin{align}\label{lp}
\frac{|\bm|}{\Mmax}\Lmax
\le  \ell(\bP) \le \frac{|\bm|+s(\bm)}{\Mmax}\Lmax.
\end{align}

Define also, for
$\mathbf{m}=(m_1,\ldots,m_k)\in\setM$ as above,
\begin{equation}\label{eq_Pm}
\Pm=\prod_{i=1}^{s(\mathbf{m})} (2m_i+1).
\end{equation}

\subsection{Unicellular maps}\label{rrrr}
A unicellular map of \emph{size} $n$ is a $2n$-gon whose sides were glued two by
two to form a (compact, connected, oriented) surface. The \emph{genus} of
the map is the genus of the surface created by the gluings (its number of
handles). After the gluing, the sides of the polygon become the \emph{edges}
of the map, and the vertices of the polygon become the \emph{vertices} of
the map. 
Note that the number of edges equals the size $n$.
By Euler's formula, a unicellular map of genus $g$ and size $n$ has $n+1-2g$
vertices.
As for trees, the gaps between two adjacent (half-)edges around a vertex are called
\emph{corners}, and there are $2n$ corners in a unicellular map of size $n$.
 The underlying graph of a unicellular map is the graph obtained
from this map by only remembering its edges and vertices.
(In general, this is a multigraph.)

We consider in this paper only
\emph{rooted} unicellular maps, where a corner is marked as the
\emph{root}. 
The underlying graph is then a rooted graph.

A rooted unicellular map of genus $0$ is the same as a rooted plane tree.

We denote by $\u$ a uniformly random unicellular map of size $n$ and genus $g$.

\subsection{\cperm{s} and \ctree{s}}\label{SScperm}

A \emph{\cperm} is a permutation whose cycles are of odd length.
Let $\fsc_n$ be the set of \cperm{s} of length $n$, and $\fsc_{n,m}$ the
subset of permutations in $\fsc_n$ with exactly $m$ cycles.
(This is empty unless $n\equiv m \pmod 2$; we assume tacitly in the sequel
that we only consider cases with $\fsc_{n,m}\neq\emptyset$.)
Note that our definition of a \cperm{} differs from the one given in \cite{CFF13}, where each cycle carries an additional sign. Here we do not include the signs as they will not play a role in our proofs.

A \emph{\ctree{}} of size $n$ and genus $g$ is a pair $(T,\sigma)\in
\cT_n\times \fsc_{n+1,n+1-2g}$ where $\sigma$ is seen as a \cperm{} of the
vertices of $T$ (given an arbitrary labeling of the vertices of $T$, 
for example the one given by a depth first search
with left to right child ordering). The underlying graph of $(T,\sigma)$ is the graph obtained by merging the vertices of $T$ that belong to the same cycle in $\sigma$. If $v,v'\in T$, we write $v\sim v'$ if $v$ and $v'$ belong to the same cycle in $\sigma$.

\begin{theorem}[\cite{CFF13}, Theorem 5]\label{thm_ctrees}
Unicellular maps of size $n$ and genus $g$ are in $2^{2g}$ to $1$ correspondence  with \ctree{s} of size $n$ and genus $g$. This correspondence preserves the underlying graph.
\end{theorem}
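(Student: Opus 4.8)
The plan is to recall and reconstruct the bijection of Chapuy, Féray and Fusy rather than to reprove it from scratch; I would present the construction in both directions and then count the fibres. The starting observation is that a unicellular map of size $n$ is encoded by a pair of permutations on the $2n$ half-edges (oriented edges): the edge involution $\alpha$ (a fixed-point-free involution) and the rotation system $\sigma_{\mathrm{map}}$ (the cyclic order of half-edges around each vertex), with the single-face condition saying that $\alpha\sigma_{\mathrm{map}}$ is an $2n$-cycle. First I would fix, using the root corner, a canonical way to read off a spanning tree-like structure; concretely, one performs the contour exploration of the unique face and, following \cite{CFF13}, designates certain edges as ``tree edges'' so that contracting them leaves a one-vertex map, while the remaining $2g$ edges are the ``non-tree'' edges. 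The tree edges form a rooted plane tree $T$ of size $n$ on $n+1$ vertices; the non-trivial part is to show how the gluing data on the non-tree edges is repackaged as a permutation $\sigma$ of the $n+1$ vertices of $T$ all of whose cycles have odd length, with exactly $n+1-2g$ cycles (so that Euler's formula is respected), and that merging vertices in a common $\sigma$-cycle recovers the underlying graph of the map.

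The key steps, in order: (i) set up the half-edge/permutation model and the contour exploration from the root; (ii) define the tree/non-tree edge dichotomy and check that the tree edges indeed span and form a plane tree $T\in\cT_n$; (iii) show that the local gluing pattern at each ``merged'' vertex of the map is exactly an odd-to-odd ``slicing'' of a cyclic sequence of corners — this is the combinatorial heart, and it is where the restriction to odd cycle lengths comes from (an even cycle cannot arise because of a parity/orientability constraint in how corners around a unicellular vertex get cut); (iv) conversely, given $(T,\sigma)\in\cT_n\times\fsc_{n+1,n+1-2g}$, reglue: at each $\sigma$-cycle of length $2k+1$, merge the corresponding $2k+1$ vertices of $T$ into a single vertex whose corners are arranged according to the cycle, producing a well-defined rooted unicellular map of size $n$, and verify via Euler's formula that its genus is $g$; (v) check that these two maps are mutually inverse up to the choice of a sign on each $\sigma$-cycle. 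Since there are $g$ ``handles'' worth of binary choices — more precisely $2g$ of them, one factor of $2$ per unit of genus split between the $2g$ extra edges — the forgetful map (dropping the $2^{2g}$ worth of sign/orientation data that \cite{CFF13} carries but we have discarded) is $2^{2g}$-to-$1$. Finally, observe that by construction the underlying graph is preserved at every stage, since merging $\sigma$-cycles is exactly the vertex identification that turns $T$ into the map's graph.

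I expect step (iii) to be the main obstacle: proving that the ``local gluing'' around each vertex of the unicellular map corresponds precisely to the cyclic structure of an odd permutation, and that \emph{every} odd permutation is realized, is the genuinely delicate bijective lemma (it is the content of the induction on genus in \cite{CFF13}, typically phrased via ``trisections'' of the contour word). The cleanest route is probably the inductive one of \cite{CFF13}: show that every unicellular map of genus $g\ge 1$ has exactly $2g$ distinguished corners (``trisections''), and that slicing at one of them either decreases the genus by one and splits a vertex (turning a $(2k+1)$-cycle into three cycles of total odd count, or increasing the number of cycles by two), set up the recursion, and match it termwise with the recursion for $\fsc$ refined by number of cycles. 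Since this is precisely \cite[Theorem~5]{CFF13}, in the write-up I would either reproduce that argument or — given that we only ever use the statement as a black box in \refS{Sctrees} — simply cite it and record here the one fact we actually exploit downstream, namely that a uniform unicellular map of size $n$ and genus $g$ has the same underlying graph distribution as $(T,\sigma)$ with $T=\mathbf T_n$ uniform in $\cT_n$ and $\sigma$ uniform in $\fsc_{n+1,n+1-2g}$, independent of $T$; this independence and uniformity is immediate from the $2^{2g}$-to-$1$ claim once the bijection is in hand.
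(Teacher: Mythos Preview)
The paper does not prove this statement at all: it is quoted verbatim as \cite[Theorem~5]{CFF13} and used as a black box, exactly as you suggest in your final paragraph. So your closing option --- cite it and record only the consequence that $\u$ has the same underlying-graph law as a uniform $(T,\sigma)\in\cT_n\times\fsc_{n+1,n+1-2g}$ --- is precisely what the paper does.

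Your attempted reconstruction, however, mixes two different bijective schemes and would not go through as written. Steps (i)--(ii), where you extract a spanning plane tree by designating ``tree edges'' versus $2g$ ``non-tree edges'' via a contour exploration, describe a scheme-based decomposition (closer to the Chapuy--Marcus--Schaeffer approach), not the CFF construction; in CFF there is no spanning tree obtained by deleting $2g$ edges, and the tree $T$ has the \emph{same} edge set as the map. The actual CFF mechanism, which you allude to only in the last paragraph, is the trisection recursion: one shows that a genus-$g$ unicellular map has exactly $2g$ trisections, and that slicing at a trisection splits one vertex into three (increasing the number of vertices by $2$ and dropping the genus by $1$), so iterating $g$ times produces a plane tree on $n+1$ vertices together with a record of which triples of vertices were glued --- this record is exactly the cycle structure of a C-permutation. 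Your accounting of the $2^{2g}$ (``one factor of $2$ per unit of genus split between the $2g$ extra edges'') is also off: the factor arises because at each of the $g$ slicing steps there are controlled choices, and in the signed formulation of \cite{CFF13} these are absorbed into signs on cycles; when one drops the signs (as this paper does), the map from C-decorated trees to unicellular maps becomes $2^{2g}$-to-$1$. If you want to actually write out a proof, follow the trisection induction of \cite{CFF13} from the start and discard steps (i)--(ii).
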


Therefore, with this correspondence, it is sufficient to study \ctree{s}.

\subsection{Further notation}

We let $(n)_r$ denote the descending factorial $n(n-1)\dotsm(n-r+1)$.

For a real number $x$, let $(x)_+:=x\vee0:=\max\set{x,0}$.

$\Poi(\gl)$ denotes the Poisson distribution with parameter $\gl$.

Convergence in distribution and in probability are denoted $\dto$
and $\pto$, respectively.

whp means with probability $1-o(1)$ as \ntoo.

Unspecified limits are as \ntoo.

\section{Cycles in \cperm{s}}\label{Sperm}

In this section, we give several lemmas on cycles in random \cperm{s};
the only results that are used outside this section are Lemmas~\ref{LXC}
and~\ref{LXD}. 

We will use $\nu$
as an index denoting cycle lengths in a \cperm.
Recall that only cycles of odd lengths are allowed; thus
it is tacitly understood that $\nu$ ranges over the odd natural numbers 
(or a subset of them if indicated).
(The same applies to $\nu_i$ and $\mu$.)

Let $n$ and $g$ be given, and 
let $m:=n-2g$.
Let $\bgs=\bgs_{n,m}$ be a uniformly random element of $\fsc_{n,m}$, 
and let $\Nnu=\Nnunm$ be its number of cycles of length $\nu$.

Assume that $\bx=(x_1,x_3,\dots)$ is a sequence of non-negative integers, 
with only finitely many $x_\nu\neq0$.
Let $\CPC(\bx)$  
be the number of \cperm{s} with exactly $x_\nu$ cycles
of size $\nu$ for every $\nu\ge1$. 
Recall that these permutations belong to $\fsc_{n,m}$ if and only if
\begin{align}\label{xc1}
 \sumnu x_\nu &= m=n-2g, &
\sumnu \nu x_\nu &= n
,\end{align}
which imply
\begin{align}
  \label{xc13}
x_3&=\tfrac12\Bigpar{n-(n-2g)-\sum_{\nu\ge5}(\nu-1)x_\nu}
=g-\sum_{\nu\ge5}\frac{\nu-1}2x_\nu,
\\\label{xc11}
x_1&=n-3x_3-\sum_{\nu\ge5}\nu x_\nu
=n-3g+\sum_{\nu\ge5}\frac{\nu-3}2x_\nu.
\end{align}

Fix $n$ and $m$ and 
let $\cXnm$ be the set of all non-negative integer sequences 
$\bx=(x_1,x_3,\dots)$ 
that satisfy \eqref{xc1}, and thus \eqref{xc13}--\eqref{xc11}.
If $\bx\in\cXnm$, 
it is easily shown that  
\begin{equation}\label{xc2}
\CPC(\bx)
=\frac{n!}{\prod_{\nu\geq 1}x_\nu!\,\nu^{x_\nu}}
.\end{equation}
For $\bx\in\cXnm$,
let also
\begin{align}\label{xcp}
p(\bx)  
:=\P\bigpar{\Nnunm=x_\nu,\;\forall \nu}
=\frac{\CPC(\bx)}{|\fsc_{m,n}|}.
\end{align}

\begin{lemma}\label{LXA}
  Suppose that $g < n/3$.  Let $\bx\in\cXnm$,  let $\mu=2k+1\ge5$ be odd
and assume $x_\mu>0$.
Let $\bx'$ be given by
  \begin{align}\label{lxa1}
    x'_\nu=
    \begin{cases}
      x_1-\frac{\mu-3}2,&\nu=1,\\
      x_3+\frac{\mu-1}2,&\nu=3,\\
x_{\nu}-\gd_{\nu,\mu},& \nu\ge5.
    \end{cases}
  \end{align}
Then $\bx'\in\cXnm$ and
\begin{align}\label{lxa2}
  x_\mu p(\bx) \le \frac{(3g)^k}{\mu(n-3g)^{k-1}}p(\bx')
=\frac{(3g)^{(\mu-1)/2}}{\mu(n-3g)^{(\mu-3)/2}}p(\bx').
\end{align}
\end{lemma}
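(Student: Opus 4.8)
\textbf{Proof plan for \refL{LXA}.}

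The plan is to argue directly from the explicit formula \eqref{xcp}, which says that $p(\bx)=\CPC(\bx)/|\fsc_{n,m}|$ with $\CPC(\bx)$ given by \eqref{xc2}. Since $p(\bx)$ and $p(\bx')$ have the same denominator, the claimed inequality \eqref{lxa2} is equivalent to the purely combinatorial statement
\begin{align}\label{planratio}
x_\mu\,\CPC(\bx)\le \frac{(3g)^k}{\mu(n-3g)^{k-1}}\,\CPC(\bx').
\end{align}
First I would check that $\bx'\in\cXnm$: the definition \eqref{lxa1} removes one cycle of length $\mu=2k+1$ and compensates by adding $\frac{\mu-1}2=k$ cycles of length $3$ and removing $\frac{\mu-3}2=k-1$ cycles of length $1$; the net change in $\sumnu x_\nu$ is $-1+k-(k-1)=0$ and in $\sumnu \nu x_\nu$ is $-\mu+3k-(k-1)=-\mu+2k+1=0$, so both constraints in \eqref{xc1} are preserved. (One should also note $x'_1=x_1-(k-1)\ge0$; when $k=1$ nothing is subtracted from $x_1$, and for $k\ge2$ this is where the hypothesis $g<n/3$, hence $x_1=n-3g+\cdots$ is large, could in principle be needed — but actually the inequality \eqref{lxa2} is trivially true if $x'_1<0$ since then $p(\bx')$ would just not be the probability of a valid configuration; cleanest is to observe the algebra below shows $\CPC(\bx)=0$ forces nothing, so I will simply assume we are in the generic case and remark on the degenerate one.)

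The heart of the computation is to form the ratio $\CPC(\bx)/\CPC(\bx')$ from \eqref{xc2}. The $n!$ cancels, and only the coordinates $\nu\in\{1,3,\mu\}$ change, so
\begin{align}\label{planfrac}
\frac{\CPC(\bx)}{\CPC(\bx')}
=\frac{x'_1!\,1^{x'_1}\cdot x'_3!\,3^{x'_3}\cdot x'_\mu!\,\mu^{x'_\mu}}
{x_1!\,1^{x_1}\cdot x_3!\,3^{x_3}\cdot x_\mu!\,\mu^{x_\mu}}
=\frac{x'_1!}{x_1!}\cdot\frac{x'_3!}{x_3!}\cdot 3^{\,x'_3-x_3}\cdot\frac{x'_\mu!}{x_\mu!}\cdot \mu^{\,x'_\mu-x_\mu}.
\end{align}
Using $x'_1=x_1-(k-1)$, $x'_3=x_3+k$, $x'_\mu=x_\mu-1$, this becomes
\begin{align}
\frac{\CPC(\bx)}{\CPC(\bx')}
=\frac{1}{x_1(x_1-1)\cdots(x_1-k+2)}\cdot (x_3+1)(x_3+2)\cdots(x_3+k)\cdot 3^{k}\cdot \frac{1}{x_\mu}\cdot \frac{1}{\mu}.
\end{align}
Rearranging, $x_\mu\,\CPC(\bx)/\CPC(\bx') = \dfrac{3^k (x_3+1)\cdots(x_3+k)}{\mu\, x_1(x_1-1)\cdots(x_1-k+2)}$, a product of $k$ factors in the numerator against $k-1$ in the denominator. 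Then I would bound $x_3+j\le x_3+k$ and, crucially, use \eqref{xc13}–\eqref{xc11}: since all $x_\nu\ge0$, one has $x_3\le g$ (from \eqref{xc13}) and $x_1\ge n-3g$ together with $x_1-j\ge x_1-(k-2)\ge$ something — here the monotonicity needs care. The clean bounds are $(x_3+1)\cdots(x_3+k)\le (x_3+k)^k$ is too lossy; better is to keep it as $\prod_{j=1}^k(x_3+j)$ and use $x_3\le g$ only after noting we really want $x_3+j\le 3g$-type control, but the stated constant is $(3g)^k$, so presumably the intended bound is $(x_3+1)\cdots(x_3+k)\le (3g)^k$ (valid once $x_3+k\le 3g$, which follows from $x_3\le g$ and $k\le 2g$, or more carefully from the constraint and $g<n/3$), and $x_1(x_1-1)\cdots(x_1-k+2)\ge (n-3g)^{k-1}$ coming from $x_1\ge n-3g$ plus the observation that the smallest factor $x_1-k+2$ is still $\ge n-3g$ because \eqref{xc11} gives $x_1=n-3g+\sum_{\nu\ge5}\frac{\nu-3}2 x_\nu\ge n-3g$ and we must subtract at most... — \textbf{this monotonicity/lower-bound-on-the-descending-factorial step is where I expect the only real friction}, since naively $x_1-k+2$ could be small if $x_1$ happens to be barely above $n-3g$; the resolution must use that removing a length-$\mu$ cycle and the bookkeeping forces enough length-$1$ cycles to be present, i.e. one uses the constraint \eqref{xc11} applied to $\bx$ itself, giving $x_1\ge n-3g$, and then the descending factorial $x_1\fall{k-1}\ge (n-3g)(n-3g-1)\cdots$ — so actually the cleanest statement uses $(n-3g)_{k-1}$ or simply bounds each factor below by $n-3g$ after checking $x_1\ge n-3g + (k-2)$, which again follows if there is at least one more length-$5$-or-longer cycle or if $k$ is small; I would handle $k=1$ separately (where the denominator product is empty and the bound is immediate) and for $k\ge2$ invoke the fact that $x_\mu\ge1$ with $\mu\ge5$ already contributes, via \eqref{xc11}, $\frac{\mu-3}2=k-1$ to the excess $x_1-(n-3g)$, so $x_1\ge n-3g+(k-1)$ and hence every one of the $k-1$ descending factors $x_1, x_1-1,\dots,x_1-(k-2)$ is $\ge n-3g+1>n-3g$. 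That is the key point. Assembling: $x_\mu\,\CPC(\bx)\le \frac{(3g)^k}{\mu(n-3g)^{k-1}}\CPC(\bx')$, which is \eqref{planratio}, and dividing by $|\fsc_{n,m}|$ yields \eqref{lxa2}; the final equality in \eqref{lxa2} is just $k=(\mu-1)/2$ and $k-1=(\mu-3)/2$.

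In summary: (i) verify $\bx'\in\cXnm$ by checking the two linear constraints; (ii) write the ratio $\CPC(\bx)/\CPC(\bx')$ explicitly from \eqref{xc2}, obtaining a ratio of factorials and powers supported on $\nu\in\{1,3,\mu\}$; (iii) simplify to a ratio of a length-$k$ rising product over a length-$(k-1)$ falling product, times $3^k/(\mu x_\mu)$; (iv) bound the numerator by $(3g)^k$ using $x_3\le g$ from \eqref{xc13}, and the denominator falling factorial below by $(n-3g)^{k-1}$ using $x_1\ge n-3g+(k-1)$ from \eqref{xc11} applied to $\bx$ (the presence of the cycle of length $\mu$ being removed is exactly what supplies the extra $k-1$). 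The main obstacle is step (iv), the lower bound on the descending factorial $x_1(x_1-1)\cdots(x_1-k+2)$, which is not a trivial monotonicity statement and genuinely requires exploiting the constraint \eqref{xc11} together with $x_\mu>0$; everything else is routine algebra.
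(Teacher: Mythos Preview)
Your approach is essentially the same as the paper's: form the ratio $\CPC(\bx)/\CPC(\bx')$ from \eqref{xc2}, reduce to the three coordinates $\nu\in\{1,3,\mu\}$, and bound the resulting rising/falling products using the constraints \eqref{xc13}--\eqref{xc11}. The argument is correct.

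Two comments on execution. First, the step you flag as ``the main obstacle'' evaporates if you parametrize by $\bx'$ rather than $\bx$, as the paper does. Writing $x_1=x'_1+(k-1)$ and $x_3=x'_3-k$, your ratio becomes
\[
\frac{\CPC(\bx)}{\CPC(\bx')}
=\frac{x'_3(x'_3-1)\cdots(x'_3-k+1)}{(x'_1+1)\cdots(x'_1+k-1)}\cdot\frac{3^k}{x_\mu\,\mu},
\]
and now the bounds are immediate: the numerator has $k$ factors each $\le x'_3\le g$ by \eqref{xc13} applied to $\bx'$, and the denominator has $k-1$ factors each $\ge x'_1+1> x'_1\ge n-3g$ by \eqref{xc11} applied to $\bx'$ (together with $g<n/3$). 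No case analysis on $k$, and no need to track ``the presence of the cycle of length $\mu$ supplies the extra $k-1$'' --- that observation is exactly the statement $x'_1\ge n-3g$ in disguise. Your bound $x_1\ge n-3g+(k-1)$ is the same inequality, just written on the $\bx$ side.

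Second, two small slips. Your parenthetical that ``the inequality \eqref{lxa2} is trivially true if $x'_1<0$'' is backwards: if $x'_1<0$ then $p(\bx')=0$ and \eqref{lxa2} would be \emph{false}, not trivially true. This doesn't matter since $x'_1\ge n-3g>0$ anyway, but the remark should be deleted. Also, after you factor out the $3^k$, the numerator bound you need is $(x_3+1)\cdots(x_3+k)\le g^k$, not $(3g)^k$; this follows from $x_3+k=x'_3\le g$, so each factor is $\le g$.
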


\begin{proof}
Note that $x'_1$ and $x'_3$ are defined such that
  \eqref{xc13}--\eqref{xc11} hold for $\bx'$. 
Furthermore, $x'_\mu=x_\mu-1\ge0$ by assumption,
and thus also, by \eqref{xc11} and the assumption $g < n/3$,
\begin{align}\label{lxa3}
    x'_1=n-3g+\sum_{\nu\ge5}\frac{\nu-3}2x'_\nu
\ge n-3g \ge0
\end{align}
Hence, $x'_\nu\ge0$ for all $\nu$, and thus $\bx'\in\cXnm$.

Note that $x_1=x'_1+k-1$ and $x_3=x'_3-k$, 
and also  that $x_1'\ge n-3g$ by \eqref{lxa3} and $x_3'\le g$ by \eqref{xc13}.
Hence,  \eqref{xcp} and \eqref{xc2} yield
\begin{align}
  \frac{p(\bx)}{p(\bx')}
&=
  \frac{\CPC(\bx)}{\CPC(\bx')}
=\frac{\prod_{\nu\geq 1}x'_\nu!\,\nu^{x'_\nu}}{\prod_{\nu\geq  1}x_\nu!\,\nu^{x_\nu}}
\notag\\&
=\frac{x_1'!\,x'_3!\, 3^{x'_3} (x_\mu-1)!\,\mu^{x_\mu-1}}
{(x'_1+k-1)!\,(x'_3-k)!\,3^{x'_3-k}x_\mu!\,\mu^{x_\mu}}
\notag\\&
\le \frac{(x_3')^k 3^k}{(x_1')^{k-1}x_\mu \mu}
\le \frac{g^k 3^k}{(n-3g)^{k-1}x_\mu \mu}
.\end{align}
The result \eqref{lxa2} follows.
\end{proof}

We can now easily estimate the mean 
of $\Nnunm$
as well as higher (mixed) factorial moments.
\begin{lemma}\label{LXB}
  Suppose that $g <n/3$.  
Then, for every $\nu\ge3$,
\begin{align}\label{lxb1}
  \E \Nnunm \le \gl_\nu:=
\frac{(3g)^{(\nu-1)/2}}{\nu(n-3g)^{(\nu-3)/2}}.
\end{align}
More generally, 
for any sequence $(\ga_\nu)_3^\infty$ of non-negative
integers (with only finitely many non-zero), 
\begin{align}\label{lxb2}
  \E \Bigsqpar{\prod_{\nu\ge3} (\Nnunm)_{\ga_\nu}}
\le\prod_{\nu\ge3}\gl_\nu^{\ga_\nu}.
\end{align}
\end{lemma}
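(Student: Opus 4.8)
The plan is to derive both bounds from \refL{LXA} by iteration. For \eqref{lxb1}, note that $\E\Nnunm = \sum_{\bx\in\cXnm} x_\nu\, p(\bx)$, where the sum is over sequences with $x_\nu\ge1$. For each such $\bx$ with $\nu=\mu=2k+1\ge5$, \refL{LXA} gives $x_\mu p(\bx) \le \gl_\mu\, p(\bx')$ with $\bx'\in\cXnm$ and $x'_\mu = x_\mu-1$. Summing over $\bx$ and observing that the map $\bx\mapsto\bx'$ is such that each $\bx'\in\cXnm$ is the image of at most one $\bx$ with a prescribed value of $x_\mu$ (indeed $\bx$ is recovered from $\bx'$ by inverting \eqref{lxa1}), we get $\E\Nnunm = \sum_\bx x_\mu p(\bx) \le \gl_\mu \sum_{\bx'} p(\bx') \le \gl_\mu$. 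The case $\nu=3$ is even easier: from \eqref{xc13}, $\Nxnm{3}\le g$ deterministically, and $\gl_3 = g$, so $\E\Nxnm{3}\le g=\gl_3$.

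For the factorial-moment bound \eqref{lxb2}, I would argue similarly but iterate the operation once for each unit we want to ``peel off''. Write $F(\bx):=\prod_{\nu\ge3}(x_\nu)_{\ga_\nu}$, so the left side of \eqref{lxb2} is $\sum_{\bx\in\cXnm} F(\bx)\,p(\bx)$. Pick any $\mu\ge5$ with $\ga_\mu>0$ and $x_\mu\ge\ga_\mu$ (terms with $x_\mu<\ga_\mu$ contribute $0$). Apply \refL{LXA}: then $x_\mu p(\bx)\le \gl_\mu p(\bx')$ with $x'_\mu = x_\mu-1$, hence $F(\bx)p(\bx) = (x_\mu)_{\ga_\mu}\,[\text{rest}]\,p(\bx) \le \gl_\mu (x_\mu-1)_{\ga_\mu-1}\,[\text{rest}]\,p(\bx') = \gl_\mu F'(\bx')\,p(\bx')$, where $F'$ is $F$ with $\ga_\mu$ replaced by $\ga_\mu-1$ (using $(x_\mu)_{\ga_\mu} = x_\mu\cdot(x_\mu-1)_{\ga_\mu-1}$ and that the other factors of $F$ are unchanged since $\bx,\bx'$ differ only in coordinates $1,3,\mu$, and $x'_3\ge x_3$, $x'_1$ can decrease but carries no $\ga$-factor for $\nu\ge3$; in fact the $\nu=3$ factor can only grow, so we should be slightly careful and instead absorb the $\nu=3$ factor last). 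Summing over $\bx$ and using injectivity of $\bx\mapsto\bx'$ at each fixed value of $x_\mu$ gives $\sum_\bx F(\bx)p(\bx)\le \gl_\mu\sum_{\bx'} F'(\bx')p(\bx')$. Iterating, we strip all $\ga_\nu$ for $\nu\ge5$, picking up the factor $\prod_{\nu\ge5}\gl_\nu^{\ga_\nu}$, and are left with $\E[(\Nxnm{3})_{\ga_3}]$; since $\Nxnm{3}\le g=\gl_3$ deterministically, $(\Nxnm{3})_{\ga_3}\le g^{\ga_3}=\gl_3^{\ga_3}$, finishing the bound.

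The one genuinely delicate point — and the step I expect to be the main obstacle — is making the summation/injectivity argument airtight when $F$ also contains the $\nu=3$ factor $(x_3)_{\ga_3}$, because the reduction \eqref{lxa1} \emph{increases} $x_3$ (by $k=(\mu-1)/2$) while decreasing $x_1$ and $x_\mu$. So after applying \refL{LXA} the $\nu=3$ factor gets larger, not smaller, which would break a naive monotone-decrease argument. The clean fix is to peel off $\mu\ge5$ first (and only those), tracking only the factors $\prod_{\nu\ge5}(x_\nu)_{\ga_\nu}$, and to bound the $(x_3)_{\ga_3}$ factor at the very end using the \emph{deterministic} inequality $x_3\le g$ valid on all of $\cXnm$ (which follows from \eqref{xc13}). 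Concretely: first prove $\E\bigsqpar{(x_3)_{\ga_3}\prod_{\nu\ge5}(\Nnunm)_{\ga_\nu}} \le g^{\ga_3}\prod_{\nu\ge5}\gl_\nu^{\ga_\nu}$ by replacing $(x_3)_{\ga_3}$ with $g^{\ga_3}$ from the start and iterating \refL{LXA} on the $\mu\ge5$ indices only (at each step the relevant factor $\prod_{\nu\ge5}(x_\nu)_{\ga_\nu}$ does strictly decrease in the way needed), and then recognize $g^{\ga_3}=\gl_3^{\ga_3}$. The injectivity of $\bx\mapsto\bx'$ at a fixed value of the coordinate being decremented is immediate from the explicit formula \eqref{lxa1}, so the sum-preservation inequality $\sum_\bx(\cdots)p(\bx)\le\gl_\mu\sum_{\bx'}(\cdots)p(\bx')$ holds with no lost mass. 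Everything else is routine bookkeeping.
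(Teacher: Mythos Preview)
Your proposal is correct and follows essentially the same route as the paper: apply \refL{LXA} once for the single-moment bound (with the $\nu=3$ case handled by the deterministic inequality $\Nxnm3\le g$ from \eqref{xc13}), and for \eqref{lxb2} iterate \refL{LXA} over the indices $\nu\ge5$ only, using injectivity of $\bx\mapsto\bx'$ at each step, and absorb the $\nu=3$ factor via the same deterministic bound. You correctly spotted the one delicate point---that the reduction \eqref{lxa1} increases $x_3$---and your fix (bound $(x_3)_{\ga_3}\le g^{\ga_3}$ up front, then peel off only $\mu\ge5$) is exactly what the paper does, phrased there as ``first suppose $\ga_3=0$, then handle the general case via \eqref{lxb3}''.
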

\begin{proof}
Let $\mu=2k+1\ge5$. 
For $\bx\in\cXnm$ with $x_\mu\ge1$, define
$\bx'$ as in \eqref{lxa1} and note that with $\gl_\mu$ defined in
\eqref{lxb1}, \eqref{lxa2} says 
\begin{align}\label{bach}
x_\mu p(\bx)\le \gl_\mu p(\bx').  
\end{align}
Hence, \refL{LXA} implies,
noting that
the map $\bx\mapsto\bx'$ is injective,
\begin{align}\label{lxb4}
  \E\Nnunm&
=\sum_{\bx\in\cXnm} x_\mu p(\bx)
=\sum_{\bx\in\cXnm,\,x_\mu\ge1} x_\mu p(\bx)
\notag\\&
\le \sum_{\bx\in\cXnm,\,x_\mu\ge1} \gl_\mu p(\bx')
\le \gl_\mu\sum_{\bx'\in\cXnm} p(\bx')
= \gl_\mu.
\end{align}
This shows \eqref{lxb1} for $\nu=\mu\ge5$.
For $\nu=3$, we simply note that by \eqref{xc13},
\begin{align}\label{lxb3}
  \Nxnm3\le g = \gl_3.
\end{align}

We prove \eqref{lxb2} similarly. Suppose first that $\ga_3=0$, and let
$\bga=(0,0,\ga_5,\ga_7,\dots)$.
If $\bx\in\cXnm$ and $x_\nu\ge \ga_\nu$ for all $\nu$, 
let $\bxga$ denote the element in $\cXnm$ 
with coordinates $x_\nu-\ga_\nu$ for $\nu\ge5$ 
(and for $\nu=1,3$,  given from these by
\eqref{xc13}--\eqref{xc11}). 
Then 
repeated use of \eqref{bach} yields
\begin{align}\label{lxb5}
p(\bx)  \prod_{\nu\ge5}(x_\nu)_{\ga_\nu} 
\le \prod_{\nu\ge5} \gl_\nu^{\ga_\nu} \cdot p\bigpar{\bxga}.
\end{align}
Hence,
\begin{align}\label{lxb6}
  \E \Bigsqpar{\prod_{\nu\ge5} (\Nnunm)_{\ga_\nu}}
&=\sum_{\bx\ge\bga} p(\bx)  \prod_{\nu\ge5}(x_\nu)_{\ga_\nu} 
\le \sum_{\bx\ge\bga}\prod_{\nu\ge5} \gl_\nu^{\ga_\nu} \cdot p\bigpar{\bxga}
\notag\\&
\le\prod_{\nu\ge5}\gl_\nu^{\ga_\nu}.
\end{align}
This proves \eqref{lxb2} when $\ga_3=0$. The general case follows by this and
the deterministic bound \eqref{lxb3}.
\end{proof}

The estimate in \refL{LXB} shows that in our range $g=o(n)$,
cycles of length 5 or more are few, and 
we will see in results and proofs below that they are
insignificant for our purposes.
The estimate in \refL{LXB} seems to be rather sharp for all $\nu$ that are
not very large, but we will not study this further. We give only a matching
lower bound in the case $\nu=3$.

\begin{lemma}  \label{LX3}
If $g<n/6$, then
\begin{align}\label{lx3}
  g-\frac{5g^2}{n-6g} \le \E\Nxnm3 \le g
.\end{align}
In particular, as \ntoo{} with $g=o(n)$, 
$\E\Nxnm3\sim g$.
\end{lemma}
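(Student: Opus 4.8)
The plan is to prove the lower bound $\E\Nxnm3 \ge g - \frac{5g^2}{n-6g}$ by going back to the formula \eqref{xc13}, which already gives the deterministic identity
\begin{align}\label{plan3}
\Nxnm3 = g - \sum_{\nu\ge5}\frac{\nu-1}2 \Nnunm,
\end{align}
so that the upper bound $\E\Nxnm3\le g$ is immediate and the lower bound amounts to showing that $\sum_{\nu\ge5}\frac{\nu-1}2\E\Nnunm$ is small, at most $\frac{5g^2}{n-6g}$. First I would apply \refL{LXB}, in the form $\E\Nnunm\le\gl_\nu=\frac{(3g)^{(\nu-1)/2}}{\nu(n-3g)^{(\nu-3)/2}}$, to bound this tail sum. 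Writing $\nu=2j+1$ with $j\ge2$, one gets $\frac{\nu-1}2\gl_\nu = j\cdot\frac{(3g)^j}{(2j+1)(n-3g)^{j-1}} \le \frac{(3g)^j}{2(n-3g)^{j-1}} = \frac{3g}{2}\bigpar{\frac{3g}{n-3g}}^{j-1}$.

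Summing the geometric series over $j\ge2$ gives
\begin{align}\label{plangeom}
\sum_{\nu\ge5}\frac{\nu-1}2\E\Nnunm \le \frac{3g}{2}\sum_{j\ge2}\Bigpar{\frac{3g}{n-3g}}^{j-1} = \frac{3g}{2}\cdot\frac{3g/(n-3g)}{1-3g/(n-3g)} = \frac{3g}{2}\cdot\frac{3g}{n-6g} = \frac{9g^2}{2(n-6g)},
\end{align}
valid when $3g/(n-3g)<1$, i.e.\ $g<n/6$, which is exactly the hypothesis. Since $\frac92 < 5$, this yields $\E\Nxnm3 \ge g - \frac{9g^2}{2(n-6g)} \ge g - \frac{5g^2}{n-6g}$, which is the claimed bound (the looser constant $5$ presumably absorbs the slight slack and makes the statement cleaner). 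The final assertion $\E\Nxnm3\sim g$ as $\ntoo$ with $g=o(n)$ then follows because $g^2/(n-6g) = o(g)$ under $g=o(n)$.

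The only mild subtlety — really a bookkeeping point rather than an obstacle — is making sure the geometric series is summed correctly and that the hypothesis $g<n/6$ is precisely what guarantees convergence; there is no deep difficulty here since all the work has been done in \refL{LXB}. If one wanted the exact constant in the paper's statement, one would just track the arithmetic of \eqref{plangeom} a touch more carefully, but bounding by $5$ is harmless. I would present \eqref{plan3} first, then invoke \refL{LXB}, then do the one-line geometric sum, and conclude.
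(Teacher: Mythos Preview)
Your proof is correct and essentially identical to the paper's own argument: both start from the identity \eqref{xc13}, invoke \refL{LXB} to bound $\frac{\nu-1}{2}\E\Nnunm$ by $\frac{\nu-1}{2}\gl_\nu\le\frac{(3g)^k}{2(n-3g)^{k-1}}$ (with $\nu=2k+1$), sum the resulting geometric series to obtain $\frac{9g^2}{2(n-6g)}$, and then relax $9/2$ to $5$. The only cosmetic difference is your index letter $j$ versus the paper's $k$.
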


\begin{proof}
  By \eqref{xc13} and \refL{LXB},
  \begin{align}
\E \bigpar{g-\Nxnm3}&
= \E\sum_{\nu\ge5}\frac{\nu-1}2\Nnunm     
\le\sum_{\nu\ge5}\frac{\nu-1}2\gl_\nu
\notag\\&
=\sum_{k\ge2} \frac{(3g)^k}{2(n-3g)^{k-1}}
=\frac{(3g)^2}{2(n-3g)}\Bigpar{1-\frac{3g}{n-3g}}\qw
\notag\\&
\le\frac{5g^2}{n-6g}.
  \end{align}
The lower bound in \eqref{lx3} follows. The upper bound follows trivially
from the deterministic bound \eqref{lxb3}.
\end{proof}

\begin{lemma}\label{LXC}
Let $\cE_{n,g}^{(r)}$ be the event where, in $\sig$, $2i-1$ and $2i$
belong to the same cycle for all $1\leq i\leq r$, and  these $r$ cycles
are distinct, and let $\P_{n,g}^{(r)}=\P(\cE_{n,g}^{(r)})$. 
If\/ $g\le n/7$, then for all $r\le n/2$,
\begin{equation}\label{lxc1}
\P_{n,g}^{(r)}
\le \frac{1}{(n)_{2r}}\gL^r
\le \Bigpar{\frac{\CXC g}{n^2}}^r  ,
\end{equation}
where $\CXC<2200$ is an absolute constant and
\begin{align}\label{lxc2}
\gL  =6g\Bigpar{\frac{1-3g/n}{1-6g/n}}^2.
\end{align}
Moreover, as \ntoo{} with $g\to\infty$ and $g=o(n)$, 
$\gL\sim 6g$ and, for any fixed $r\ge1$,
\begin{align}\label{lxc3}
\P_{n,g}^{(r)}
\sim \Bigpar{\frac{6g}{n^2}}^r  
.\end{align}
\end{lemma}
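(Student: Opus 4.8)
The plan is to directly count the \cperm{s} in $\fsc_{n,m}$ (with $m=n-2g$) that realize the event $\cE^{(r)}_{n,g}$, and compare with $|\fsc_{n,m}|$. First I would fix the cycle structure near the distinguished elements: for each $i\le r$, the pair $\set{2i-1,2i}$ sits in a common cycle, so I would sum over the choices of odd lengths $\nu_1,\dots,\nu_r\ge3$ of these $r$ (distinct) cycles, over the ways to fill in the remaining $\nu_j-2$ positions in each from the unused labels, and over the ways to arrange everything cyclically. Concretely, the number of ways to build a cycle of length $\nu$ through two prescribed elements is $(\nu-1)(n-2)(n-3)\cdots(n-\nu+1)$ after accounting for the cyclic symmetry — or, more cleanly, I would write the count as a ratio involving descending factorials $(n)_{2r}$ times a product over $j$ of a factor depending on $\nu_j$, divided by the same structure for $\fsc_{n,m}$. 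The cleanest bookkeeping is: condition on having placed these $r$ cycles, then the rest is a \cperm{} of the remaining $n-\sum_j\nu_j$ elements with $m-r$ cycles, and one uses that such counts are monotone / controlled via \eqref{xc2}.

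The key step is then to convert the sum over $(\nu_j)$ into the product bound $\gL^r/(n)_{2r}$. After factoring out $1/(n)_{2r}$ (which comes from the $2r$ prescribed elements being distinguishable), each cycle $j$ contributes a factor of the form $\sum_{\nu_j\ge3,\text{ odd}} c(\nu_j) n^{-(\nu_j-3)}$ where $c(3)$ is the dominant term; the geometric-type tail over $\nu_j\ge5$ is controlled exactly as in the proof of \refL{LX3}, using $g=o(n)$ (here the hypothesis $g\le n/7$ makes the ratios like $(1-3g/n)/(1-6g/n)$ bounded and gives the explicit constant). Summing the geometric series over $\nu_j$ produces precisely $\gL=6g\bigpar{(1-3g/n)/(1-6g/n)}^2$ per cycle, whence the first inequality in \eqref{lxc1}; since $\gL\le 6g\cdot(1-3/7)^{-2}\cdot(\text{small correction})$ one reads off the absolute constant $\CXC<2200$ from $(n)_{2r}\ge (n/2)^{2r}$ (valid for $r\le n/2$) together with $\gL^r\le (6g)^r(1-3g/n)^{2r}(1-6g/n)^{-2r}$, giving $\P^{(r)}_{n,g}\le (\CXC g/n^2)^r$.

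For the asymptotics, as $\ntoo$ with $g\to\infty$ and $g=o(n)$ the correction factor in $\gL$ tends to $1$, so $\gL\sim 6g$; and for fixed $r$, $(n)_{2r}\sim n^{2r}$, so the upper bound in \eqref{lxc1} gives $\P^{(r)}_{n,g}\le (1+o(1))(6g/n^2)^r$. For the matching lower bound \eqref{lxc3} I would restrict the count to the case where all $r$ distinguished cycles are $3$-cycles (the dominant contribution): there are $\sim (6g)^r$ ways to do this (choosing the third element of each $3$-cycle, $(n)_r$ ways, times the count of \cperm{s} of the rest, which by \refL{LX3}-type estimates is $(1-o(1))$ of $|\fsc_{n,m}|$ since removing $r$ many $3$-cycles changes $g$ by $r=O(1)$ and the ratio of \cperm{} counts is controlled via \eqref{xc2}), divided by $(n)_{2r}\sim n^{2r}$; this yields $\P^{(r)}_{n,g}\ge (1-o(1))(6g/n^2)^r$, matching the upper bound.

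The main obstacle I expect is the clean justification of the ``remove $r$ cycles, get a \cperm{} of a smaller set'' decomposition with the correct multiplicative constants and a rigorous handling of the sum over cycle lengths $\nu_j$ — in particular making the geometric-tail estimate uniform in $r$ so that the per-cycle bound $\gL$ really exponentiates to $\gL^r$, and tracking the explicit constant $\CXC$. Once that combinatorial identity/inequality is set up, everything else is a repeat of the elementary estimates already used in Lemmas~\ref{LXB} and~\ref{LX3}.
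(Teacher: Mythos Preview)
Your high-level strategy --- decompose by the lengths $\nu_1,\dots,\nu_r$ of the $r$ distinguished cycles, sum a geometric-type tail to produce $\gL$ for the upper bound, and restrict to all $3$-cycles for the lower bound --- is exactly the skeleton of the paper's proof. However, the paper's execution is different from your direct-counting route and neatly sidesteps what you correctly flag as ``the main obstacle''. Rather than comparing $|\fsc_{n-\sum\nu_j,\,m-r}|$ with $|\fsc_{n,m}|$, the paper introduces an auxiliary uniform random permutation $\tau$ of $[n]$ independent of $\bgs$ and works with the relabelled points $\tau(1),\dots,\tau(2r)$. Conditioning on $\bgs$, the probability that $\tau(2i-1),\tau(2i)$ fall into prescribed distinct cycles of lengths $\nu_i$ is exactly $\frac{1}{(n)_{2r}}\prod_i \nu_i(\nu_i-1)$; the only randomness in $\bgs$ that survives is the factor $\prod_\nu (\Nnunm)_{\ga_\nu}$ counting the choices of cycles, and its expectation is bounded directly by \refL{LXB}. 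Summing over $(\nu_i)$ then factorises as $\bigl(\sum_{\nu\ge3}\nu(\nu-1)\gl_\nu\bigr)^r$, and that inner sum is computed in closed form to be $\gL$. The lower bound is likewise obtained by taking the expectation of $(\Nxnm3)_r$ (via Jensen and \refL{LX3}), not by comparing cardinalities of $\fsc$-sets. So the paper never needs the ratio $|\fsc_{n',m'}|/|\fsc_{n,m}|$ at all; your approach could in principle be made to work, but you would essentially be reproving \refL{LXB} in disguise.

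One concrete error: the bound $(n)_{2r}\ge (n/2)^{2r}$ is \emph{false} for $r$ close to $n/2$ (e.g.\ $2r=n$ gives $n!$ versus $(n/2)^n$, and $(n!)^{1/n}\sim n/e<n/2$). The paper instead uses $(n)_{2r}^{1/(2r)}\ge (n!)^{1/n}\ge n/e$, together with $\gL\le 6g(1-6/7)^{-2}=294g$ when $g\le n/7$, to get $\CXC=294e^2<2200$.
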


By symmetry, the probability is the same if we replace the nodes
$1,\dots,2r$ by any other $2r$ fixed nodes in $[n]$.

\begin{proof}
  Let $\tau$ be a uniformly random permutation of $[n]$ independent of $\bgs$.
By the invariance just mentioned, the probability is the same if we instead
consider the nodes $\tau(1),\dots,\tau(2r)$, which will be convenient in the
proof. 

For a sequence $\nu_1,\dots,\nu_r$, let $\cE(\nu_1,\dots,\nu_r)$ be the
event that there are distinct cycles $\sfC_1,\dots,\sfC_r$ in $\bgs$ such
that
$|\sfC_i|=\nu_i$ and $\tau(2i-1),\tau(2i)\in\sfC_i$ for every $i\le r$.
We may assume $\nu_i\ge3$ for every $i$, 
since otherwise the event is impossible. 

We first compute the conditional probability
$\P\bigpar{\cE(\nu_1,\dots,\nu_r)\mid\bgs}$.
Let $\ga_\nu:=|\set{i:\nu_i=\nu}|$,
the number of the cycles $\sfC_i$ that are required to have length $\nu$.
Given $\bgs$, there are $\Nnunm$ cycles of length $\nu$, and thus
$\prod_\nu (\Nnunm)_{\ga_\nu}$ ways to choose the cycles
$\sfC_1,\dots,\sfC_r$.
Given these cycles, the probability that $\tau(2i-1),\tau(2i)\in\sfC_i$ for
all $i$ is
\begin{align}\label{lxc6}
  \frac{1}{(n)_{2r}}\prod_{i=1}^r\bigpar{\nu_i(\nu_i-1)}
=  \frac{1}{(n)_{2r}}\prod_{\nu}\bigpar{\nu(\nu-1)}^{\ga_\nu}.
\end{align}
Hence, 
\begin{align}\label{lxc7}
  \P\bigpar{\cE(\nu_1,\dots,\nu_r)\mid\bgs}
=\prod_\nu (\Nnunm)_{\ga_\nu}\cdot
 \frac{1}{(n)_{2r}}\prod_{\nu}\bigpar{\nu(\nu-1)}^{\ga_\nu}.
\end{align}
Define, for $\nu\ge3$,
\begin{align}\label{glx}
  \glx_\nu:=\nu(\nu-1)\gl_\nu 
=(\nu-1)\frac{(3g)^{(\nu-1)/2}}{(n-3g)^{(\nu-3)/2}}.
\end{align}
Taking the expectation in \eqref{lxc7}, we obtain by \refL{LXB},
\begin{align}\label{lxc8}
 \P\bigpar{\cE(\nu_1,\dots,\nu_r)}
&=  \E  \P\bigpar{\cE(\nu_1,\dots,\nu_r)\mid\bgs}
\notag\\&
= \frac{1}{(n)_{2r}}\prod_{\nu}\bigpar{\nu(\nu-1)}^{\ga_\nu}
\cdot\E \prod_\nu (\Nnunm)_{\ga_\nu}
\notag\\&
\le\frac{1}{(n)_{2r}}\prod_{\nu}\bigpar{\nu(\nu-1)}^{\ga_\nu}
\cdot\prod_\nu\gl_\nu^{\ga_\nu}
\notag\\&
=\frac{1}{(n)_{2r}}\prod_{\nu}{{\glx_\nu}}^{\ga_\nu}
=\frac{1}{(n)_{2r}}\prod_{i=1}^r\glx_{\nu_i}
.\end{align}
Summing over all $\nu_1,\dots,\nu_r$ yields the result
\begin{align}\label{lxc9}
  \P_{n,g}^{(r)}
&=\sum_{\nu_1,\dots,\nu_r}  \P\bigpar{\cE(\nu_1,\dots,\nu_r)}
\le
\frac{1}{(n)_{2r}}
\sum_{\nu_1,\dots,\nu_r}\prod_{i=1}^r\glx_{\nu_i}
=
\frac{1}{(n)_{2r}}
\biggpar{\sum_{\nu\ge3}\glx_\nu}^r.
\end{align}
We have
\begin{align}\label{gL2}
\sum_{\nu\ge3}\glx_\nu
&=\sum_{\nu\ge3}(\nu-1)\frac{(3g)^{(\nu-1)/2}}{(n-3g)^{(\nu-3)/2}}
=\sum_{k\ge1}2k\frac{(3g)^{k}}{(n-3g)^{k-1}}
\notag\\&
= 6g\Bigpar{1-\frac{3g}{n-3g}}\qww
=6g\Bigpar{\frac{1-3g/n}{1-6g/n}}^2
=\gL,
\end{align}
as defined in \eqref{lxc2}.
Hence, \eqref{lxc9} proves the first inequality in  \eqref{lxc1}.

Moreover, using Stirling's formula,
\begin{align}
  (n)_{2r}^{1/2r} \ge (n!)^{1/n} \ge n/e
\end{align}
and if $g/n\le 1/7$, then $\gL\le 6g(1-6/7)^{-2}=294 g$. 
Hence, the second inequality in \eqref{lxc1} holds with 
$\CXC=294 e^2 < 2200$. 

For a fixed $r$, we have $\gL\sim 6g$ since $g/n\to0$, and thus 
\eqref{lxc1}  yields the (implicit) upper bound in \eqref{lxc3}.
For a matching lower bound,
we consider only the case $\nu_1=\dots=\nu_r=3$.
We have, by again taking the expectation in \eqref{lxc7},
\begin{align}\label{lxc10}
\P_{n,g}^{(r)}
\ge  \P\bigpar{\cE(3,\dots,3)}
= \frac{1}{(n)_{2r}} 6^r \E (\Nxnm3)_r.
\end{align}
Furthermore, by Jensen's inequality and \refL{LX3},
for any fixed $r$,
\begin{align}\label{lxc11}
  \E (\Nxnm3)_r
&\ge \E (\Nxnm3-r)_+^r
\ge  (\E \Nxnm3-r)_+^r
=\bigpar{ g+O(g^2/n)+O(1)}^r
\notag\\&
\sim g^r.
\end{align}
The (implicit) lower bound in \eqref{lxc3} 
follows from \eqref{lxc10} and \eqref{lxc11}.
\end{proof}

In \refL{LXC}, each of the cycles that contain one of the distinguished
points $1,\dots,2k$ contains exactly two of them.
We will also use an estimate of the probability that some cycle contains more
than two of the distinguished points.

\begin{lemma}
  \label{LXD}
Assume\/ $g\le n/7$.
For every fixed $k\ge1$ and $r\le k/2$, the probability that $1,\dots,k$
belong to exactly $r$ different cycles in $\bgs$, with at least two of these
points in each cycle, is, for some constants $C=C(k)$,
\begin{align}\label{lxd}
  \le C g^r/(n)_k
\le C g^r/n^k.
\end{align}
\end{lemma}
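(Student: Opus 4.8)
\textbf{Proof proposal for Lemma~\ref{LXD}.}
The plan is to reduce this to \refL{LXC} by summing over the combinatorial data describing how the $k$ distinguished points are distributed among cycles. First I would fix a partition $\cP=\{B_1,\dots,B_r\}$ of $[k]$ into $r$ blocks, each of size at least $2$ (there are only finitely many such $\cP$, a number depending on $k$ alone), and estimate the probability that, for each $j$, all points of $B_j$ lie in a common cycle $\sfC_j$, with $\sfC_1,\dots,\sfC_r$ distinct. For each block $B_j$ pick two representative points; then this event is contained in the event that those two representatives share a cycle, for each $j$, and these $r$ cycles are distinct. After applying the symmetry remark following \refL{LXC} (the probability is unchanged if we replace $1,2,\dots,2r$ by any $2r$ fixed nodes), this last event has probability $\Pngr\le C' g^r/(n)_{2r}$ by \eqref{lxc1}, for an absolute constant $C'$.

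However, this only bounds the probability that the representatives share cycles; it does not force \emph{all} of $B_j$ into $\sfC_j$. To get the full event I would instead condition on $\bgs$ and on the choice of the $r$ distinct cycles $\sfC_1,\dots,\sfC_r$ (of lengths $\nu_1,\dots,\nu_r\ge3$), exactly as in the proof of \refL{LXC}: using an auxiliary uniform permutation $\tau$ independent of $\bgs$, the conditional probability that $\tau$ maps block $B_j$ into $\sfC_j$ for every $j$ is $\prod_j (\nu_j)_{|B_j|}/(n)_k$. Summing over the choice of cycles gives a conditional probability $\prod_j (\Nxnm{\nu_j})\cdot(\nu_j)_{|B_j|}/(n)_k$-type expression, and taking expectations and applying the factorial-moment bound \eqref{lxb2} of \refL{LXB} yields an upper bound $\frac1{(n)_k}\prod_j \bigl(\sum_{\nu\ge3}(\nu)_{|B_j|}\gl_\nu\bigr)$. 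Since $|B_j|\ge2$ and $\gl_\nu=(3g)^{(\nu-1)/2}/(\nu(n-3g)^{(\nu-3)/2})$, each inner sum $\sum_{\nu\ge3}(\nu)_{|B_j|}\gl_\nu$ is a convergent series (for $g\le n/7$, the ratio of consecutive terms is bounded away from $1$) whose leading term is of order $g$, coming from $\nu=3$; so each factor is $O(g)$ with a constant depending only on $|B_j|$, hence on $k$. This gives $\le C(\cP) g^r/(n)_k$ for each $\cP$.

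Finally I would sum over the finitely many admissible partitions $\cP$ of $[k]$ into $r$ blocks of size $\ge2$, absorbing everything into a single constant $C=C(k)$, to obtain the bound $\le Cg^r/(n)_k$. The second inequality in \eqref{lxd}, namely $Cg^r/(n)_k\le C g^r/n^k$, is immediate since $(n)_k\le n^k$. The main obstacle I anticipate is purely bookkeeping: matching the index set $[k]$ against the ``two points per cycle'' setup of \refL{LXC} while keeping track of which constants depend on $k$ and which are absolute; the analytic input (the geometric-type decay of $\sum (\nu)_{j}\gl_\nu$ and the factorial moment bound) is already essentially contained in Lemmas~\ref{LXB} and~\ref{LXC}, so no genuinely new estimate is needed.
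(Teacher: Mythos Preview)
Your main argument (from the second paragraph on) is correct and is essentially the paper's own proof: the paper also introduces the auxiliary uniform permutation $\tau$, conditions on $\bgs$, counts choices of $r$ distinct cycles of prescribed lengths, applies the factorial-moment bound \eqref{lxb2} of \refL{LXB}, and uses that $\sum_{\nu\ge3}(\nu)_\gb\gl_\nu\le C(\gb)\,g$ for each fixed $\gb\ge2$ when $g\le n/7$. The only cosmetic difference is that the paper sums over ordered size-tuples $(\gb_1,\dots,\gb_r)$ with $\gb_i\ge2$ and $\sum_i\gb_i=k$, whereas you sum over set partitions $\cP$ of $[k]$ into $r$ blocks of size $\ge2$; both index sets are finite with cardinality bounded in terms of $k$ alone, so this makes no difference.

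There is, however, one genuine slip at the very end. You claim that $Cg^r/(n)_k\le Cg^r/n^k$ ``is immediate since $(n)_k\le n^k$'', but that inequality points the wrong way: $(n)_k\le n^k$ gives $1/(n)_k\ge 1/n^k$, not $\le$. The correct argument (which the paper states in one line) is that the two bounds are \emph{equivalent} with different constants $C=C(k)$: the event is empty unless $n\ge k$, and for $n\ge k$ one has
\[
\frac{(n)_k}{n^k}=\prod_{j=0}^{k-1}\Bigl(1-\frac{j}{n}\Bigr)\ge\prod_{j=0}^{k-1}\Bigl(1-\frac{j}{k}\Bigr)=\frac{k!}{k^k},
\]
so $g^r/(n)_k\le (k^k/k!)\,g^r/n^k$, and the factor $k^k/k!$ is absorbed into the constant.
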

(For $r=k/2$, this is a just weaker version of \refL{LXC}.)

\begin{proof}
We must have $n\ge k$, and thus the two bounds in \eqref{lxd} are equivalent
(with different $C$).

We argue similarly to the proof of \refL{LXC}.
 We use again randomization, and let $\tau$ be a
  random permutation of $[n]$ independent of $\bgs$.
For sequences $\nu_1,\dots,\nu_r$ and $\gb_1,\dots,\gb_r$,
let $\cE(\nu_1,\dots,\nu_r;\gb_1,\dots\gb_r)$ be 
the event that there are distinct cycles $\sfC_1,\dots,\sfC_r$ in $\bgs$ such
that
$|\sfC_i|=\nu_i$ and exactly $\gb_i$ of the points $\tau(1),\dots,\tau(k)$
belong to $\sfC_i$ for every $i\le r$.
We assume $\gb_i\ge2$ and $\sum_i\gb_i=k$, and 
we also assume $\nu_i\ge3$ for every $i$, 
since otherwise the event is impossible.  

As in the proof of \refL{LXC},
let $\ga_\nu:=|\set{i:\nu_i=\nu}|$. Then, again,
given $\bgs$, there are 
$\prod_\nu (\Nnunm)_{\ga_\nu}$ ways to choose the cycles
$\sfC_1,\dots,\sfC_r$.
Given these cycles, 
the conditional probability that $\gb_i$ of $\tau(1),\dots,\tau(k)$ belong
to $\sfC_i$, $i=1,\dots,r$, is
\begin{align}\label{lxd1}
 \le C \frac{1}{(n)_k}\prodir (\nu_i)_{\gb_i}. 
\end{align}
(In this proof, $C$ denotes constants that may depend on $k$ but not on
other variables. In \eqref{lxd1} we may take $C=k!$.)
Hence, using \refL{LXB},
\begin{align}\label{lxd2}
&\P\bigsqpar{\cE(\nu_1,\dots,\nu_r;\gb_1,\dots\gb_r) }  
\le \frac{C}{(n)_k}\prodir (\nu_i)_{\gb_i}
\E \prod_{\nu} (\Nnunm)_{\ga_\nu}
\notag\\&\qquad
\le  \frac{C}{(n)_k}\prodir (\nu_i)_{\gb_i}
 \cdot\prod_{\nu} \gl_\nu^{\ga_\nu}
= \frac{C}{(n)_k}\prodir (\nu_i)_{\gb_i}\gl_{\nu_i}
.\end{align}
We sum \eqref{lxd2} first over all $\nu_i,\dots,\nu_r$, and note that by
\eqref{lxb1} and the assumption $g\le n/7$, for every fixed $\gb\ge2$,
\begin{align}\label{lxd3}
  \sum_{\nu\ge3} (\nu)_\gb \gl_\nu \le C(\gb) g. 
\end{align}
Since we only consider $\gb_i\le k$, 
we thus obtain from \eqref{lxd2},
\begin{align}\label{lxd4}
&\sum_{\nu_1,\dots,\nu_r}\P\bigsqpar{\cE(\nu_1,\dots,\nu_r;\gb_1,\dots\gb_r) }  
\le \frac{C}{(n)_k}\prodir \sum_{\nu_i\ge3} (\nu_i)_{\gb_i}\gl_{\nu_i}
\le \frac{C}{(n)_k} g^r
.\end{align}
The result follows by summing over the $O(1)$ allowed $(\gb_1,\dots,\gb_r)$.
\end{proof}

\section{Counting paths in trees}\label{Spaths}

As in Section~\ref{Sperm}, many lemmas here are local; only
Lemmas~\ref{lem_uniendp_sum},~\ref{lem_cv_paths} and~\ref{LpM} are used
outside this section. 
\subsection{Generating-functionology}
We start by introducing some generating functions.
First, the generating function of
rooted plane trees enumerated by edges:
\begin{equation}
B(z):=\frac{1-\sqrt{1-4z}}{2z}
=\sumno \Cat{n}z^n,
\end{equation}
where $\Cat{n}:=\frac{1}{n+1}\binom{2n}{n}$ is the $n$th Catalan number.
We also introduce
\begin{equation}
T(z):=zB(z)=\frac{1-\sqrt{1-4z}}{2},
\end{equation}
which satisfies 
\begin{equation}\label{T=}
T(z)=\frac{z}{1-T(z)}.
\end{equation}
We will also use doubly rooted plane trees. These trees have two roots,
labelled first and second root. Both roots are corners of $T$; the roots may
be the same corner, but that case we also distinguish between two different 
orderings of the roots. 
A rooted plane tree with $n$ edges has $2n$ corners, and thus a second root
may be added in $2n+1$ different places (including 2 places in the corner of
the first root).
Therefore, the generating function of doubly rooted plane trees, enumerated
by edges, 
is
\begin{equation}\label{AB}
A(z):=\Bigpar{2z\frac{\partial}{\partial z}+1}B(z)
.\end{equation}

We recall the Lagrange--Bürmann formula that will be useful to us in this
section. 
\begin{theorem}[Lagrange--Bürmann formula (\cite{FlaSeg09}, Theorem A.2)]
Let $F$ and $\phi$ be power series satisfying 
\begin{equation}\label{LB}
F(z)=z\phi(F(z)),
\end{equation}
then, for any (analytic) function $f$, we have
\begin{equation}\label{Lagrange}
[z^n]f(F(z))=\frac 1 n [z^{n-1}]\bigpar{\phi(z)^nf'(z)},
\qquad n\ge1.
\end{equation}
\end{theorem}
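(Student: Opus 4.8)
The plan is to prove \eqref{Lagrange} by the classical residue computation, carried out formally on power/Laurent series; the hypothesis that makes everything work is that $F$ has order exactly $1$ as a formal power series, equivalently $\phi(0)\neq0$ (if $\phi(0)=0$ then $F\equiv0$ and there is nothing to prove — indeed, evaluating $F'(z)=\phi(F(z))+zF'(z)\phi'(F(z))$ at $z=0$ gives $F'(0)=\phi(0)$). Write $\Res G:=[z^{-1}]G$ for the residue of a formal Laurent series $G$. I would isolate two elementary facts. First, $\Res(G')=0$ for every Laurent series $G$, since no power $z^k$ contributes a $z^{-1}$-term to its derivative. Second, the change-of-variables rule: if $F$ is a formal power series with $F(0)=0$, $F'(0)\neq0$, then $\Res_z\bigpar{h(F(z))F'(z)}=\Res_w h(w)$ for every Laurent series $h$. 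The second fact I would prove by linearity on monomials $h(w)=w^k$: for $k\neq-1$ the integrand is $\tfrac{1}{k+1}\tfrac{d}{dz}F(z)^{k+1}$, a derivative, hence of residue $0=\Res_w w^k$; for $k=-1$, writing $F(z)=z\,u(z)$ with $u(0)=F'(0)\neq0$, one has $F'(z)/F(z)=\tfrac{1}{z}+\bigpar{\log u(z)}'$, and $\bigpar{\log u(z)}'$ is an honest power series, so $\Res_z\bigpar{F'/F}=1=\Res_w w^{-1}$. (Substituting $F$, which has positive order, into a Laurent series $h$ is legitimate precisely because $F$ has order $1$.)

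With these in place the computation is three lines. Since $[z^n]G(z)=\Res_z\bigpar{z^{-n-1}G(z)}$ and $z^{-n-1}=-\tfrac{1}{n}\tfrac{d}{dz}z^{-n}$, applying the first fact to $G=z^{-n}f(F(z))$ and expanding $G'$ gives
\begin{equation*}
[z^n]f(F(z))=\frac{1}{n}\Res_z\bigpar{z^{-n}f'(F(z))F'(z)}.
\end{equation*}
From $F(z)=z\phi(F(z))$ we get $z=F(z)/\phi(F(z))$, hence $z^{-n}=\phi(F(z))^nF(z)^{-n}$, so the integrand equals $h(F(z))F'(z)$ with $h(w):=w^{-n}\phi(w)^nf'(w)$. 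The change-of-variables rule then yields
\begin{equation*}
[z^n]f(F(z))=\frac{1}{n}\Res_w h(w)=\frac{1}{n}[w^{n-1}]\bigpar{\phi(w)^nf'(w)},
\end{equation*}
which is exactly \eqref{Lagrange} (after renaming the dummy variable).

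The only genuinely delicate point is the change-of-variables rule — specifically the $k=-1$ case and the verification that composition with $F$ is well defined on Laurent series; everything else is bookkeeping. If one prefers an analytic argument (this is the route of \cite{FlaSeg09}), the same three lines become a contour integral $\tfrac{1}{2\pi\ii}\oint$ over a small circle around $0$: the substitution $w=F(z)$ is then a genuine biholomorphism of a neighbourhood of $0$, integration by parts kills the boundary term, and \eqref{Lagrange} is read off directly. I would present the formal version, as it needs no convergence hypotheses beyond what is already assumed.
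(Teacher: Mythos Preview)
Your proof is correct. The paper does not prove this statement at all: it merely quotes the Lagrange--B\"urmann formula from \cite[Theorem~A.2]{FlaSeg09} and uses it as a black box in the subsequent lemmas. So there is no ``paper's own proof'' to compare against; what you have written is essentially the standard residue argument found in \cite{FlaSeg09} itself (there carried out analytically via contour integrals, as you note in your final paragraph). Your formal-power-series version is clean and self-contained, and the two lemmas you isolate --- $\Res(G')=0$ and the change-of-variables rule, with the $k=-1$ case handled via $F'/F=1/z+(\log u)'$ --- are exactly the right ingredients. Nothing to add.
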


We will use this formula to estimate the number of occurrences of a certain pattern in $\T_n$.

\begin{lemma}\label{LT1}
Let $\bt$ be a rooted plane tree of size $\ell$.
Then the number of rooted plane trees of size $n$ with a marked rooted subtree 
isomorphic to
$\bt$ 
is 
\begin{equation}\label{lt1a}
T_{n,\ell}=2n[z^{n-\ell}]B(z)^{2\ell}
=2\ell\binom{2n}{n-\ell}
,
\qquad n\ge\ell,
\end{equation}
and we have
\begin{equation}\label{eq_Tnl_equiv}
\E N_{\bt}(\T_n)=
\frac{T_{n,\ell}}{\Cat{n}}=(1+o(1))2\ell n
\end{equation}
where the $o(1)$ is uniform over $\ell\in[1,\Linf]$.
\end{lemma}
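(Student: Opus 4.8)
The plan is to use the Lagrange--Bürmann formula to get the generating function identity, then extract coefficients and apply Stirling-type estimates. First I would set up the combinatorial decomposition: a rooted plane tree of size $n$ with a marked rooted subtree isomorphic to $\bt$ (of size $\ell$) can be built by choosing a corner of a ``host structure'' to attach a copy of $\bt$. More precisely, contracting the copy of $\bt$ to a single vertex turns the configuration into a doubly-rooted-type object; the copy of $\bt$ occupies $\ell$ edges, and at each of its $2\ell$ corners one may graft an arbitrary rooted plane tree (generating function $B(z)$), while the original root of the big tree sits somewhere. This is exactly the structure whose generating function is $2n[z^{n-\ell}]B(z)^{2\ell}$: the factor $B(z)^{2\ell}$ accounts for the $2\ell$ pendant subtrees at the corners of $\bt$, the shift $z^{n-\ell}$ records that $\bt$ itself uses $\ell$ edges, and the factor $2n$ comes from the $2n$ choices of root corner in the resulting size-$n$ tree (equivalently, differentiating the edge-counting generating function and using that a size-$n$ tree has $2n$ corners).

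Next I would convert $2n[z^{n-\ell}]B(z)^{2\ell}$ into the closed form $2\ell\binom{2n}{n-\ell}$ via Lagrange--Bürmann. Using $T(z)=zB(z)$ with $T(z)=z/(1-T(z))$, i.e.\ $T(z)=z\phi(T(z))$ with $\phi(w)=1/(1-w)$, one applies \eqref{Lagrange} to $f(w)=w^{2\ell}$ (or to $f(w) = w^\ell$ with an appropriate bookkeeping) to evaluate $[z^{n}]T(z)^{2\ell}$, getting binomial coefficients; reindexing and simplifying yields $[z^{n-\ell}]B(z)^{2\ell} = \frac{\ell}{n}\binom{2n}{n-\ell}$ (the standard ``ballot-type'' identity $[z^m]B(z)^k = \frac{k}{2m+k}\binom{2m+k}{m}$ specialized to $k=2\ell$, $m=n-\ell$, which gives $\frac{2\ell}{2n}\binom{2n}{n-\ell}=\frac{\ell}{n}\binom{2n}{n-\ell}$). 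Multiplying by $2n$ gives $T_{n,\ell}=2\ell\binom{2n}{n-\ell}$, establishing \eqref{lt1a}. Since $N_\bt(\T_n)$ counts occurrences and $|\cT_n|=\Cat n$, we get $\E N_\bt(\T_n)=T_{n,\ell}/\Cat n$ immediately.

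For the asymptotics \eqref{eq_Tnl_equiv}, I would write
\begin{equation*}
\frac{T_{n,\ell}}{\Cat n}
= \frac{2\ell\binom{2n}{n-\ell}}{\frac{1}{n+1}\binom{2n}{n}}
= 2\ell(n+1)\,\frac{\binom{2n}{n-\ell}}{\binom{2n}{n}}
= 2\ell(n+1)\prod_{j=1}^{\ell}\frac{n-j+1}{n+j}.
\end{equation*}
Then $\prod_{j=1}^\ell \frac{n-j+1}{n+j} = \exp\bigl(-\sum_{j=1}^\ell (\log(1+\tfrac{j}{n}) - \log(1-\tfrac{j-1}{n}))\bigr)$, and for $\ell \le \Linf$, recalling $\Linf = (\log g)\Lmax$ with $\Lmax = o(n\qq)$, we have $\ell = o(n\qq)$, so $\sum_{j\le\ell} j/n = O(\ell^2/n) = o(1)$ uniformly; hence the product is $1+o(1)$ uniformly in $\ell\in[1,\Linf]$, and $2\ell(n+1)=(1+o(1))2\ell n$ as well. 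Combining gives $\E N_\bt(\T_n) = (1+o(1))\,2\ell n$ uniformly.

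The main obstacle I expect is pinning down the combinatorial decomposition underlying \eqref{lt1a} precisely --- in particular getting the factor $2\ell$ rather than $2\ell+1$ or $\ell$ right, i.e.\ correctly accounting for where the copy of $\bt$ sits relative to the root corner and how corners of $\bt$ versus corners of the grafted subtrees are counted. The cleanest route is probably to avoid an ad hoc bijection and instead directly justify $T_{n,\ell} = 2n[z^{n-\ell}]B(z)^{2\ell}$ as ``(number of corners of the big tree) $\times$ (generating function for an unrooted copy of $\bt$ with a pendant plane tree grafted at each of its $2\ell$ corners)'', where the root of the big tree is what the leading $2n$ selects; the Lagrange--Bürmann / ballot-identity computation then does the rest mechanically. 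The asymptotic step is routine once the uniformity range $\ell = o(n\qq)$ is invoked.
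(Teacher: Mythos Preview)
Your proposal is correct and follows essentially the same approach as the paper: the same combinatorial decomposition (graft a rooted plane tree at each of the $2\ell$ corners of $\bt$, then choose one of the $2n$ corners as root), the same Lagrange--B\"urmann computation yielding $[z^{n-\ell}]B(z)^{2\ell}=\frac{2\ell}{2n}\binom{2n}{n-\ell}$, and the same uniform asymptotic via $\ell=o(n^{1/2})$. Your asymptotic step is written out as an explicit product rather than invoking Stirling, but this is a presentational difference only.
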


\begin{proof}
We want to enumerate the number of trees of size $n$ with a marked subtree
isomorphic to $\bt$. One can build such a tree in a bijective way (see
Figure~\ref{fig_Tnl}). 
Starting from a copy of $\bt$, to each corner $c$ of $\bt$, pick a rooted
tree and glue its root corner to $c$. The only constraint is that the total
size of all the $2\ell$ trees that we graft must be $n-\ell$. One obtains an
unrooted 
tree $T$ of size $n$ with a marked copy of $\bt$. 
Finally, one just needs to pick one of its $2n$ corners as the root;
note that the resulting pairs $(T,\bt)$  will be distinct, since $T$ has no
automorphisms (preserving order and $\bt$).
Hence the number of rooted such trees is 
\begin{equation}\label{emT}
T_{n,\ell}=2n[z^{n-\ell}]B(z)^{2\ell}.
\end{equation}

\begin{figure}
\center
\includegraphics[scale=1]{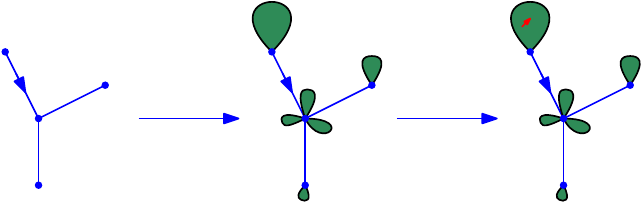}
\caption{Building a tree $T$ with a marked copy of $\bt$. Here $\ell=3$, $\bt$ is in blue, the $2\ell$ trees are in green, and the root of $T$ is in red.}\label{fig_Tnl}
\end{figure}

Using the Lagrange--Bürmann formula \eqref{Lagrange} and \eqref{T=},
we get 
\begin{align}
[z^{n-\ell}]B(z)^{2\ell}&=[z^{n+\ell}]T(z)^{2\ell}
\notag\\
&=\frac{1}{n+\ell}[z^{n+\ell-1}]\Bigpar{\frac{1}{(1-z)^{n+\ell}}2\ell z^{2\ell-1}}
\notag\\
&=\frac{2\ell}{n+\ell}[z^{n-\ell}]\frac{1}{(1-z)^{n+\ell}}
\notag\\
&=\frac{2\ell}{n+\ell}\binom{2n-1}{n-\ell}
=\frac{2\ell}{2n}\binom{2n}{n-\ell}
.\end{align}
This, together with \eqref{emT}, shows
\eqref{lt1a}.

The Stirling formula gives us 
\begin{equation}\label{stirling2}
\binom{2n}{n-\ell}\sim \binom{2n}{n}=(n+1)\Cat{n}
\end{equation}
uniformly for $|\ell|\le \Linf$ (because $\Linf=o(n\qq)$); hence
\eqref{lt1a} implies
\begin{equation}
\frac{T_{n,\ell}}{\Cat{n}}\sim 2\ell n
\end{equation}
uniformly in $\ell\in[1,\Linf]$, which shows \eqref{eq_Tnl_equiv}.
\end{proof}

With the same method, we can also get an estimate for the number of pairs of patterns.
\begin{lemma}\label{lem_non_intersecting_trees}
Let $\bt_1$ and $\bt_2$ be two rooted plane trees
of sizes $\ell_1$ and $\ell_2$, 
and let
$\ell=\ell_1+\ell_2$.
Then the number of rooted plane trees of size $n$ with a marked pair of non intersecting rooted
subtrees that are isomorphic to $\bt_1$ and $\bt_2$, respectively, 
is
\begin{equation}\label{l2ta}
T_{n,\ell_1,\ell_2}\leq 8n\ell_1\ell_2[z^{n-\ell}]\bigpar{A(z)B(z)^{2\ell-2}}
=4\ell_1\ell_2(n+\ell)\binom{2n}{n+\ell}.
\end{equation}
and we have
\begin{equation}\label{eq_Tnl1l2_equiv}
\frac{T_{n,\ell_1,\ell_2}}{\Cat{n}}\leq (1+o(1))(2\ell_1 n)(2\ell_2 n)
\end{equation}
where the $o(1)$ is uniform over $\ell_1,\ell_2\in[1,\Linf]$.
\end{lemma}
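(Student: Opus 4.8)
The plan is to argue exactly as in the proof of \refL{LT1}, via an (over)counting construction, but now inserting a \emph{doubly rooted} plane tree as a bridge between the two patterns. First I would build every tree we want to count as follows: start from disjoint copies of $\bt_1$ and $\bt_2$; single out one of the $2\ell_1$ corners of $\bt_1$, say $c_1$, and one of the $2\ell_2$ corners of $\bt_2$, say $c_2$; take a doubly rooted plane tree $D$, glue its first root corner into $c_1$, and glue the copy of $\bt_2$ (at its corner $c_2$) into the second root corner of $D$; finally graft an ordinary rooted plane tree into each of the remaining $(2\ell_1-1)+(2\ell_2-1)=2\ell-2$ corners of $\bt_1$ and $\bt_2$, the only constraint being that the total size of $D$ together with these $2\ell-2$ trees is $n-\ell$. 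Picking one of the $2n$ corners of the resulting tree as its root produces an element of $\cT_n$ carrying a marked pair of non-intersecting subtrees isomorphic to $\bt_1$ and $\bt_2$. Conversely every such marked tree $T$ arises this way: the unique shortest path of $T$ between $V(\bt_1)$ and $V(\bt_2)$ determines $c_1$ and $c_2$ (the corners through which it leaves the two patterns), $D$ is recovered as the connected component containing this path in the forest obtained from $T$ by deleting all edges of $\bt_1$ and of $\bt_2$, and the $2\ell-2$ ordinary trees are the subtrees of $T$ hanging off the other corners. The construction overcounts a little (for instance a one-vertex $D$ with coinciding root corners would force $\bt_1$ and $\bt_2$ to share a vertex), so, recalling from \eqref{AB} that $A$ is the generating function of doubly rooted plane trees, we get $T_{n,\ell_1,\ell_2}\le 2\ell_1\cdot 2\ell_2\cdot 2n\cdot[z^{n-\ell}]\bigpar{A(z)B(z)^{2\ell-2}}=8n\ell_1\ell_2[z^{n-\ell}]\bigpar{A(z)B(z)^{2\ell-2}}$.

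The next step evaluates this coefficient in closed form, in the spirit of \refL{LT1} but with a little calculus on $T(z)$. From $B(z)=T(z)/z$ and $A(z)=2zB'(z)+B(z)=2T'(z)-T(z)/z$ one gets
\begin{equation*}
A(z)B(z)^{2\ell-2}=z^{-(2\ell-2)}\Bigpar{\tfrac{2}{2\ell-1}\bigpar{T(z)^{2\ell-1}}'-\tfrac{T(z)^{2\ell-1}}{z}},
\end{equation*}
and extracting the coefficient of $z^{n-\ell}$ gives $[z^{n-\ell}]\bigpar{A(z)B(z)^{2\ell-2}}=\bigpar{\tfrac{2(n+\ell-1)}{2\ell-1}-1}[z^{n+\ell-1}]T(z)^{2\ell-1}=\tfrac{2n-1}{2\ell-1}\,[z^{n+\ell-1}]T(z)^{2\ell-1}$. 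The Lagrange--Bürmann formula \eqref{Lagrange}, applied to $F=T$ with $\phi(u)=(1-u)^{-1}$ (see \eqref{T=}), yields the standard identity $[z^{m}]T(z)^{j}=\tfrac{j}{m}\binom{2m-j-1}{m-j}$, so $[z^{n+\ell-1}]T(z)^{2\ell-1}=\tfrac{2\ell-1}{n+\ell-1}\binom{2n-2}{n-\ell}$; cancelling $2\ell-1$ and using $\binom{2n-2}{n-\ell}=\tfrac{(n+\ell)(n+\ell-1)}{2n(2n-1)}\binom{2n}{n-\ell}$ together with $\binom{2n}{n-\ell}=\binom{2n}{n+\ell}$ gives $[z^{n-\ell}]\bigpar{A(z)B(z)^{2\ell-2}}=\tfrac{n+\ell}{2n}\binom{2n}{n+\ell}$. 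Multiplying by $8n\ell_1\ell_2$ yields $T_{n,\ell_1,\ell_2}\le 4\ell_1\ell_2(n+\ell)\binom{2n}{n+\ell}$, which is \eqref{l2ta}.

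Finally, \eqref{eq_Tnl1l2_equiv} follows from Stirling's formula exactly as \eqref{stirling2}: since $\ell=\ell_1+\ell_2\le 2\Linf=o(n\qq)$, we have $\binom{2n}{n+\ell}=\binom{2n}{n-\ell}\sim\binom{2n}{n}=(n+1)\Cat{n}$ and $n+\ell\sim n$, both uniformly over $\ell_1,\ell_2\in[1,\Linf]$, whence $T_{n,\ell_1,\ell_2}/\Cat{n}\le(1+o(1))\,4\ell_1\ell_2\,n(n+1)=(1+o(1))(2\ell_1 n)(2\ell_2 n)$. I expect the only real difficulty to be making the bridging construction precise: correctly identifying which part of $T$ plays the role of the doubly rooted tree $D$, and checking that the correspondence is onto (so that it genuinely overcounts, whence the inequality); the coefficient extraction and the Stirling step are routine given \refL{LT1}.
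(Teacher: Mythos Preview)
Your argument is correct and follows essentially the same route as the paper: the same over\-counting construction with a doubly rooted bridging tree, the same Lagrange--B\"urmann coefficient extraction (you route it through $T(z)$ while the paper stays with $B(z)$, but the computations are equivalent and both reach $\tfrac{n+\ell}{2n}\binom{2n}{n-\ell}$), and the same Stirling step for \eqref{eq_Tnl1l2_equiv}. Your surjectivity justification is a touch more explicit than the paper's, which simply asserts that the construction produces every non-intersecting configuration plus some extras.
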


\begin{remark}
It can be shown that the inequality in~\eqref{eq_Tnl1l2_equiv} is actually
an equality, but we do not need this here.
\end{remark}

\begin{proof}
This is similar to the proof of \refL{LT1}.
The decomposition now is the following (see Figure~\ref{fig_Tnl1l2}): start from
a copy of $\bt_1$ and a copy of $\bt_2$, choose one corner on each and graft
a doubly rooted tree $\tilde T$, identifying its first root (second root) with
the root corner of $\bt_1$ ($\bt_2$). Then graft rooted trees to
each of the $2\ell-2$ remaining corners of $\bt_1$ and $\bt_2$ (as done in
the proof of \refL{LT1}), to obtain an unrooted tree
$T$ of size $n$, and pick one of its $2n$ corners as
the root. This way, we can build all rooted trees with non intersecting copies of $\bt_1$ and $\bt_2$, plus some cases where they intersect (namely, when $\bt_1$ and $\bt_2$ are grafted at a same vertex of $\tilde T$).

\begin{figure}
\center
\includegraphics[scale=0.8]{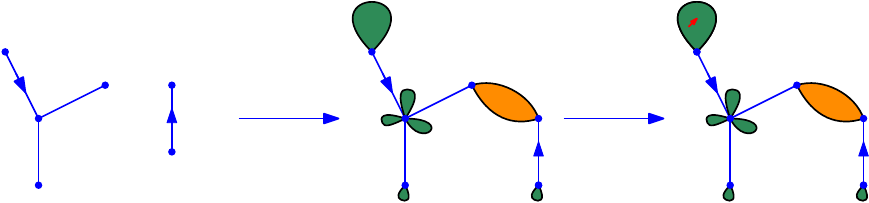}
\caption{Building a tree $T$ with a marked pair of trees. Here $\ell_1=3$ and $\ell_2=1$, $\bt_1$ and $\bt_2$ are in blue, the doubly rooted tree is in orange, the $2\ell-2$ trees are in green, and the root of $T$ is in red.}\label{fig_Tnl1l2}
\end{figure}

This yields
\begin{align}
T_{n,\ell_1,\ell_2}\leq (2\ell_1)(2\ell_2)[z^{n-\ell}]\bigpar{A(z)B(z)^{2\ell-2}}\cdot(2n),
\end{align}
showing the first part of \eqref{l2ta}.
We then use \eqref{AB} and, again, 
the Lagrange--Bürmann formula, and obtain
\begin{align}
[z^{n-\ell}]\bigpar{A(z)B(z)^{2\ell-2}}&
=2[z^{n-\ell-1}]\bigpar{B'(z)B(z)^{2\ell-2}}+[z^{n-\ell}]B(z)^{2\ell-1}
\notag\\
&=2[z^{n-\ell-1}]\frac{(B^{2\ell-1})'}{2\ell-1}+[z^{n-\ell}]B(z)^{2\ell-1}
\notag\\
&=\left(2\frac{n-\ell}{2\ell-1}+1\right)[z^{n-\ell}]B(z)^{2\ell-1}
\notag\\
&=\left(2\frac{n-\ell}{2\ell-1}+1\right)\frac{2\ell-1}{n+\ell-1}\binom{2n-2}{n-\ell}
\notag\\
&=\frac{2n-1}{n+\ell-1}\binom{2n-2}{n-\ell}
=\frac{n+\ell}{2n}\binom{2n}{n-\ell}
,\end{align}
and \eqref{l2ta} follows.
Using \eqref{stirling2} again, we obtain
\begin{equation} 
\frac{T_{n,\ell_1,\ell_2}}{\Cat{n}}\leq (1+o(1)) (2n\ell_1)(2n\ell_2)
,\end{equation}
uniformly for $\ell_1,\ell_2\le\Linf$.
\end{proof}

\subsection{Unions of paths}
\begin{definition}
The set $\Bicx(\ell_1,\ell_2)$ is the set of unrooted
trees $\bt$ with edges either blue, red, or bicolored such that 
$\bt$ is the union of a
blue path of length $\ell_1$ and a red path of length $\ell_2$. 
\end{definition}
\begin{lemma}\label{lem_bic_cardinal}
There are at most $16(\ell_1+1)(\ell_2+1)(\min(\ell_1,\ell_2)+1)$ trees in
$\Bicx(\ell_1,\ell_2)$.  
\end{lemma}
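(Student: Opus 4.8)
The plan is to exploit the basic fact that in a tree the intersection of two paths is again a path (possibly reduced to a single vertex). Let $\bt\in\Bicx(\ell_1,\ell_2)$ be the union of a blue path $B$ of length $\ell_1$ and a red path $R$ of length $\ell_2$, so that the bicolored edges of $\bt$ are exactly those of $Q:=B\cap R$. Since $\bt=B\cup R$ is connected, $Q$ is nonempty; write $j:=|Q|$ for its number of edges, so $0\le j\le\min(\ell_1,\ell_2)$, with $j=0$ meaning that $Q$ is a single vertex. The structural claim itself is immediate: if $u,v$ are two common vertices of $B$ and $R$, then the unique path of $\bt$ between $u$ and $v$ is a sub-path of both $B$ and $R$, so $B\cap R$ is connected, hence a sub-path of the path $B$.

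Next I would read off the finite amount of data that determines the coloured tree $\bt$. The path $Q$ splits $B$ into $Q$ together with two (possibly empty) pendant paths of lengths $a_1,a_2\ge0$ attached at the two endpoints of $Q$ — both attached at the unique vertex of $Q$ when $j=0$ — with $a_1+a_2+j=\ell_1$; similarly $R$ is $Q$ together with two pendant paths of lengths $c_1,c_2\ge0$ with $c_1+c_2+j=\ell_2$. Conversely, $\bt$ together with its colouring is recovered from: the integer $j$; the lengths $a_1,a_2$ of the blue tails at the two ends of $Q$; the lengths $c_1,c_2$ of the red tails; and the combinatorial information of how the red tails are matched to the ends of $Q$ relative to the blue tails, which is at most $2$ possibilities (and one could, if needed, also record the two orientations of $B$ and of $R$, a further bounded factor).

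Then the count is routine: for each of the at most $\min(\ell_1,\ell_2)+1$ values of $j$ there are at most $\ell_1-j+1\le\ell_1+1$ ordered choices of $(a_1,a_2)$ and at most $\ell_2-j+1\le\ell_2+1$ ordered choices of $(c_1,c_2)$, times a small constant for the placement. Multiplying and summing over $j$ gives a bound of the form $c\,(\ell_1+1)(\ell_2+1)(\min(\ell_1,\ell_2)+1)$ with a small explicit constant $c$, in particular at most $16(\ell_1+1)(\ell_2+1)(\min(\ell_1,\ell_2)+1)$ as claimed. There is no real obstacle here: the only point that needs care is the structural observation that $B$ and $R$ overlap in a single sub-path, and everything else is a deliberately generous enumeration — the factor $16$ in the statement leaves ample room, so there is no need to optimize the constant or to track the exact number of tail configurations.
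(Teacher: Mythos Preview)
Your proof is correct and takes essentially the same approach as the paper: decompose the bicoloured tree into the common sub-path $Q=B\cap R$ of length $\ell'$ together with four pendant tails (two blue, two red), and then enumerate the possible tuples $(\ell',a_1,a_2,c_1,c_2)$ together with a bounded number of placement/orientation choices. The paper phrases this constructively (build $\bt$ from pieces and assert surjectivity), whereas you phrase it as a decomposition and make the structural fact ``$B\cap R$ is a sub-path'' explicit; the counting is otherwise identical, and your observation that the constant $16$ leaves ample slack matches the paper's attitude.
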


\begin{proof}
In this proof, we make an exception, and allow paths to have length 0.
We describe a procedure to build a tree in $\Bicx(\ell_1,\ell_2)$ 
(see Figure~\ref{fig_path_union}).
\begin{enumerate}
\item Create a bicolored path $p'$ of length $0\leq \ell'\leq \min(\ell_1,\ell_2)$ ($\min(\ell_1,\ell_2)+1$ possibilities).
\item Create two red paths $p_1^a$ and $p_1^b$ of total length
  $\ell_1-\ell'$ ($\leq \ell_1+1$ possibilities).
\item Create two blue paths $p_2^a$ and $p_2^b$ of total length $\ell_2-\ell'$ ($\leq \ell_2+1$ possibilities).
\item Attach $p_1^a$ and $p_2^a$ to $\start(p')$ ($2$ possibilities).
\item Attach $p_1^b$ and $p_2^b$ to $\eend(p')$ ($2$ possibilities).
\item Orient the blue and red paths ($2\times 2=4$ possibilities).
\end{enumerate}
This procedure is surjective; hence this proves what we wanted.
\end{proof}

\begin{figure}
\center
\includegraphics[scale=0.5]{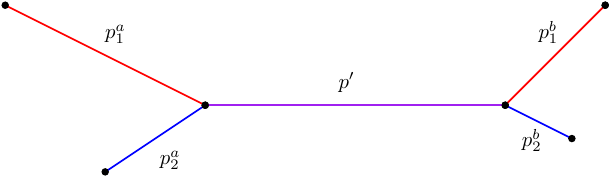}
\caption{The union of two paths}\label{fig_path_union}
\end{figure}

Now, for $i,j\leq \Mmax$, we define 
\begin{align}
  \label{Bic}
\Bic(i,j)=\bigsqcup \Bicx(\ell_1,\ell_2),
\end{align}
where the union is over all $\ell_1,\ell_2\in \Interv{i}\times\Interv{j}$.

\begin{lemma}
For every tree $T$ and $\bm\in\setM$, we have
\begin{equation}\label{eq_bound_disjoint_paths}
1\geq \frac{\pM(T)}{\prod_{i=1}^{s(\m)}P_{m_i}(T)}\geq 1-\sum_{i,j}\frac{\sum_{\bt\in\Bic(m_i,m_j)}N_\bt(T)}{P_{m_i}(T)P_{m_j}(T)}
\end{equation}
\end{lemma}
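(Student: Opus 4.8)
The plan is to prove both inequalities by an elementary counting argument, reading $\prod_{i=1}^{s(\m)}P_{m_i}(T)$ as the number of \emph{all} lists $(p_1,\dots,p_k)$ of paths in $T$ with $|p_i|\in\Interv{m_i}$ for each $i$ (choosing the $i$-th path freely among the $P_{m_i}(T)$ paths of that length, with no disjointness imposed), so that $\PPm(T)$ is exactly the sublist of those whose paths are pairwise vertex-disjoint. The left inequality is then immediate: $\PPm(T)$ is a subset of this collection, so $\pM(T)\le\prod_i P_{m_i}(T)$. (If some $P_{m_i}(T)=0$ then $\pM(T)=0$ too and the statement is read in the obvious way, so I assume all $P_{m_i}(T)>0$.)

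For the right inequality I would bound the number of lists that fail to lie in $\PPm(T)$, i.e. lists $(p_1,\dots,p_k)$ with $|p_i|\in\Interv{m_i}$ but $p_a\cap p_b\neq\emptyset$ for some $a<b$. By a union bound over the offending pair $\{a,b\}$, it suffices, for each fixed $a<b$, to count lists with $p_a\cap p_b\neq\emptyset$: the remaining paths $p_\ell$ ($\ell\neq a,b$) are unconstrained beyond their length, contributing a factor at most $\prod_{\ell\neq a,b}P_{m_\ell}(T)$, and one is left to bound the number of ordered pairs $(p,q)$ of paths with $|p|\in\Interv{m_a}$, $|q|\in\Interv{m_b}$ and $p\cap q\neq\emptyset$. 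Here is where $\Bic$ enters: if $p$ and $q$ share a vertex, then $p\cup q$ is a connected subgraph of the tree $T$, hence a subtree; colouring each edge blue if it lies in $p$ only, red if in $q$ only, bicoloured if in both, and recording the orientations of $p$ and $q$, exhibits the embedded subtree $p\cup q$ as an occurrence in $T$ of a decorated tree in $\Bicx(|p|,|q|)\subseteq\Bic(m_a,m_b)$. Since $(p,q)$ is recovered from this decorated embedded subtree (the oriented blue path is $p$, the oriented red path is $q$), the assignment is injective, so the number of such pairs is at most $\sum_{\bt\in\Bic(m_a,m_b)}N_\bt(T)$.

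Combining these bounds gives
\[
\prod_{\ell}P_{m_\ell}(T)-\pM(T)\ \le\ \sum_{a<b}\Bigpar{\prod_{\ell\neq a,b}P_{m_\ell}(T)}\sum_{\bt\in\Bic(m_a,m_b)}N_\bt(T),
\]
and dividing through by $\prod_{\ell}P_{m_\ell}(T)>0$ yields
\[
1-\frac{\pM(T)}{\prod_{\ell}P_{m_\ell}(T)}\ \le\ \sum_{a<b}\frac{\sum_{\bt\in\Bic(m_a,m_b)}N_\bt(T)}{P_{m_a}(T)P_{m_b}(T)}\ \le\ \sum_{i,j}\frac{\sum_{\bt\in\Bic(m_i,m_j)}N_\bt(T)}{P_{m_i}(T)P_{m_j}(T)},
\]
the last step because every term is non-negative and $\{(a,b):a<b\}\subseteq\{(i,j)\}$; rearranging gives the claim. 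The only step that needs care — and the one I would present most carefully — is the colouring correspondence of the previous paragraph, namely that intersecting pairs of oriented paths inject into occurrences of the trees in $\Bic$; but this is precisely the bookkeeping that the definition of $\Bicx(\ell_1,\ell_2)$ was designed to capture, so no real obstacle arises, and everything else is routine.
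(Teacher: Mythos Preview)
Your proposal is correct and follows essentially the same approach as the paper: the paper's proof is a terse one-line appeal to inclusion--exclusion, and you have simply spelled out the same argument in detail, including the union bound over the offending pair and the injection of intersecting ordered pairs of paths into occurrences of trees in $\Bic(m_a,m_b)$.
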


\begin{proof}
The proof is direct from the inclusion--exclusion principle: indeed,
$\prod_{i=1}^{s(\m)}P_{m_i}(T)$ counts lists of $s(\m)$ paths with the
right sizes, without any constraint of non-intersection between these
paths. Additionally, 
\begin{align} 
\sum_{i,j}\sum_{\bt\in\Bic(m_i,m_j)}N_\bt(T)\prod_{k=1\atop k\neq i,j}^{s(\m)}P_{m_k}(T)
\end{align}
(over)counts such lists where two of the paths intersect.
\end{proof}

\begin{definition}\label{DPU}
A \emph{path tree} is a tree $T$ together with a list of $q\ge1$ paths
$p_1,p_2,\dots ,p_q$ 
such that $T=\bigcup_{i=1}^q p_i$, and  for every  $i>1$, there
exists $j<i$ such that $\Ext(p_i)\cap\Ext(p_j)\neq \emptyset$.
(For convenience, we denote the path tree simply by $T$.) 
For a path tree $T$, we write $\Ext(T):=\bigcup_{i=1}^q\Ext(p_i)$.

Let $\PU_{q,w}(\ell)$ be the set of path trees $T$ 
with $q$ paths $p_i$ such that $\bigabs{\Ext(T)}=w$,
and  $|p_i|\le\ell$ for every path $p_i$.
\end{definition}

\begin{lemma}\label{lem_union_endpoints}
  \begin{romenumerate}
  
  \item\label{i2} 
If $T\in \PU_{q,w}(\ell)$, then $\max_{v\in V(T)} \deg(v)\leq q+1$.
  \item \label{i3}
For every $q$ and $w$, there exists a constant $C_{q,w}$ such that
$|\PU_{q,w}(\ell)|\leq C_{q,w}\ell^{2w-3}$.
\end{romenumerate}
\end{lemma}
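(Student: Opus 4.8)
The plan is to treat the two parts in order, using a deletion/encoding argument for the counting bound in~\ref{i3}.

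For~\ref{i2}: the claim is purely structural. Fix $T\in\PU_{q,w}(\ell)$ with its list of paths $p_1,\dots,p_q$, and fix a vertex $v$. Since each $p_i$ is a simple path, it contributes at most two edges incident to $v$ (and exactly two unless $v$ is an endpoint of $p_i$ or $v\notin p_i$). So a priori $\deg(v)\le 2q$; I need the sharper bound $q+1$. The point is that $T=\bigcup p_i$ and the paths are linked through endpoints: order them as in Definition~\ref{DPU}, so for $i>1$ there is $j<i$ with $\Ext(p_i)\cap\Ext(p_j)\neq\emptyset$. I would argue by induction on $q$ that the graph $T_r:=\bigcup_{i=1}^r p_i$ satisfies $\max_v\deg_{T_r}(v)\le r+1$. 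For $r=1$, $T_1$ is a simple path, so every degree is $\le 2=1+1$. For the inductive step, adding $p_{r+1}$ to $T_r$: the path $p_{r+1}$ shares an endpoint $x$ with some earlier path. Walk along $p_{r+1}$ starting from $x$; as long as we stay on vertices already in $T_r$ we are only potentially re-using existing edges (no new degree), and the first time we leave $T_r$ we create a pendant-type attachment adding degree $1$ at the branch vertex; similarly from the other endpoint. A careful bookkeeping of how many vertices of $p_{r+1}$ can be "new branch points" on $T_r$ shows each vertex gains at most $1$ in degree when passing from $T_r$ to $T_{r+1}$, except possibly... — this is the delicate spot; the cleaner route is to observe directly that at any vertex $v$, among $p_1,\dots,p_q$, the number of paths that ``turn'' at $v$ (use two edges at $v$) is limited by connectivity through endpoints, so that $\deg(v)\le (\#\text{paths having }v\text{ as an interior vertex})\cdot 2$, and then use that the path tree is built by successive endpoint-gluings to bound the relevant count. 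I will phrase~\ref{i2} via this induction on $q$ showing $\max\deg\le q+1$.

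For~\ref{i3}: I want $|\PU_{q,w}(\ell)|\le C_{q,w}\ell^{2w-3}$. The idea is that a path tree $T$ with $q$ paths and $w$ endpoints is essentially a fixed combinatorial ``shape'' (a tree on at most $w+O(1)$ branch vertices, with a bounded number of topologies once $q,w$ are fixed, using part~\ref{i2} to bound degrees), decorated by the lengths of its ``segments'' (maximal paths between consecutive branch/endpoint vertices). A tree with $w$ leaves and all internal degrees $\ge 3$ has at most $w-2$ internal vertices and at most $2w-3$ edges in its topological structure, hence at most $2w-3$ segments; each segment has a length in $\{1,\dots,\ell\}$ (or bounded by $\ell$), giving at most $\ell^{2w-3}$ choices. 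The number of underlying topological shapes, and the number of ways the $q$ paths $p_i$ can be routed through a given shape, is bounded by a constant depending only on $q$ and $w$ (here~\ref{i2} is used to keep vertex degrees bounded, so the shape lives in a finite set). Multiplying the constant count of shapes/routings by the $\le\ell^{2w-3}$ choices of segment lengths yields the bound with $C_{q,w}$ absorbing everything combinatorial.

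The main obstacle I expect is making the ``$\le 2w-3$ segments'' bound fully rigorous: one must be careful that the endpoint set $\Ext(T)$ (size $w$) really does dominate the set of branch vertices up to the usual $-2$ in the leaf count, and that degree-$2$ vertices of $T$ that are not endpoints get correctly absorbed into segment lengths rather than counted as structural vertices — in particular a path tree can have $\Ext(T)$-vertices of degree $2$, and a vertex can be simultaneously an endpoint of some $p_i$ and interior to another, so the bookkeeping between ``structural vertices'' and ``endpoints'' needs care. Part~\ref{i2} is what makes the set of structural shapes finite (degrees $\le q+1$), so the two parts are used together. Once the structure is pinned down, the counting is routine and the exponent $2w-3$ falls out of the edge count of a tree with $w$ leaves.
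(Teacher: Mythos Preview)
For \ref{i2}, your induction on $q$ is exactly the paper's argument, and the ``delicate spot'' dissolves once you observe that $T_r=\bigcup_{i\le r}p_i$ is a connected subgraph of the tree $T$ (by the endpoint-sharing hypothesis, inductively), hence a subtree; therefore $p_{r+1}\cap T_r$ is a subtree of the path $p_{r+1}$, i.e.\ a subpath containing the shared endpoint, hence an initial segment. So $p_{r+1}$ exits $T_r$ at most once, at a single vertex $v'$, and only $v'$ gains one new incident edge; every other vertex of $T_r$ is unchanged and the new vertices have degree $\le 2$. Your alternative route through ``paths that turn at $v$'' is unnecessary.

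For \ref{i3}, your approach is correct but genuinely different from the paper's. The paper proves \ref{i2} and \ref{i3} by a single induction on $q$: the reconstruction of $T$ from $T'\in\PU_{q-1,w'}(\ell)$ with $w'\in\{w-1,w\}$ (using \ref{i2} for $T'$ to bound the number of corners at the branch vertex) gives the recursion
\[
|\PU_{q,w}(\ell)|\le w^2\,|\PU_{q-1,w}(\ell)|+2wq\ell(1+q\ell)\,|\PU_{q-1,w-1}(\ell)|,
\]
from which the exponent $2w-3$ emerges by iteration. Your shape-plus-segment-lengths decomposition is more structural and identifies $2w-3$ directly as the maximal number of segments between consecutive vertices of $\Ext(T)\cup\{v:\deg v\ge3\}$ (at most $2w-2$ such vertices, since every leaf of $T$ lies in $\Ext(T)$ and a tree with $\le w$ leaves has $\le w-2$ branch vertices). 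One point to make explicit in the write-up: each segment $S$ indeed has length $\le\ell$, because any edge of $S$ lies on some $p_i$, and the degree-$2$ interior of $S$ (disjoint from $\Ext(T)$) forces $p_i$ to traverse all of $S$, whence $|S|\le|p_i|\le\ell$. With that, your argument goes through; incidentally, you do not actually need \ref{i2} to bound the number of shapes, since the structural-vertex bound $\le 2w-2$ already makes that set finite.
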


\begin{proof}
We will prove this by induction. 
Both parts are verified for $q=1$ because  $\PU_{1,w}$ is empty unless
$w=2$, and
$\PU_{1,2}(\ell)$ is just the set of paths of length $\leq \ell$,
so $|\PU_{1,2}(\ell)|=\ell$.

Now assume $q\ge2$, and
let $T=\bigcup_{i=1}^q p_i\in \PU_{q,w}(\ell)$.
Then $T':=\bigcup_{i=1}^{q-1} p_i$ is also a path tree,
with $T'\in\PU_{q-1,w'}(\ell)$ for some $w'\in\set{w-1,w}$.

Starting from $T'$, one can reconstruct $T$ by adding a path $p_q$.
If $w'=w$, then both endpoints of $p_q$ have to be in $\Ext(T')$,
which yields $\le w^2$ 
choices.

If $w'=w-1$, then $p_q$ must have one endpoint $v$ 
in $\Ext(T')$, but its other
endpoint may be either in $T'\setminus\Ext(T')$ or outside $T'$; in the
latter case, let $v'$ by last point in $p_q\cup T'$ (starting from $v$).
We may then reconstruct $T$ from $T'$ as follows (with some overcounting,
since not all choices below are allowed):
\begin{enumerate}
\item Choose a vertex $v\in \Ext(T')$ ($w'\le w$ possibilities).
\item Choose a vertex $v'\in V(T')$ ($\leq |V(T')|\leq
  q\ell$ possibilities).
\item Either stop at $v'$ and let $v^*:=v'$, 
or attach a path of length $\le \ell $ 
to one of the corners of $v'$, and let $v^*$ be the other endpoint of this
path ($\leq \ell q$ possibilities, by \ref{i2}).
\item Declare the path from $v$ to $v^*$ as $p_q$, and give it an
  orientation (2 choices).
\end{enumerate}

It is clear that with this procedure, the vertex degrees increase by at most
$1$ and thus \ref{i2} holds for $T$, since it holds for
$T'$ by  the induction hypothesis.

Furthermore, since the procedure is surjective, 
it follows that
\begin{align}
  |\PU_{q,w}(\ell)|\le w^2|\PU_{q-1,w}(\ell)|
+2wq\ell\bigpar{1+q\ell}|\PU_{q-1,w-1}(\ell)|,
\end{align}
and \ref{i3} follows by induction.
\end{proof}

\begin{figure}
\center
\includegraphics[scale=1]{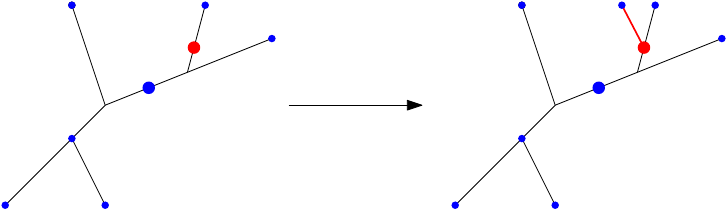}
\caption{Building a path tree recursively. Here, $w'=7$ and $w=8$. The vertices of
  $\Ext(T')$ are blue, $v$ is the large blue dot, and $v'$ is the large red dot. The path between $v'$ and $v^*$ is in red.}\label{fig_path_tree}
\end{figure}

\subsection{Patterns in uniform trees}

\begin{lemma}\label{lem_bic_sum}
Fix $i,j\ge0$. Then
\begin{equation}
\E\lrsqpar{\sum_{\bt\in\Bic(i,j)}N_\bt(\T)}
=O\bigpar{n\Lmaxx^6}
=o\bigpar{n^2\Lmaxx^4}.
\end{equation}
\end{lemma}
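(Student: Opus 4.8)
The plan is to bound the expected number of occurrences of trees in $\Bic(i,j)$ by summing the estimate from \refL{LT1} over all trees $\bt\in\Bic(i,j)$, and then controlling the number of such trees via \refL{lem_bic_cardinal}. Concretely, recall that $\Bic(i,j)=\bigsqcup_{\ell_1\in\Interv{i},\,\ell_2\in\Interv{j}}\Bicx(\ell_1,\ell_2)$, and that each tree $\bt\in\Bicx(\ell_1,\ell_2)$ is the union of a blue path of length $\ell_1$ and a red path of length $\ell_2$, so its size (number of edges) satisfies $|\bt|\le\ell_1+\ell_2\le (i+1+j+1)\Lmax/\Mmax = O(\Lmax)$ since $i,j\le\Mmax$ are fixed. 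Hence by \refL{LT1}, for each such $\bt$ of size $\ell:=|\bt|$ we have $\E N_\bt(\T) = (1+o(1))\,2\ell n = O(n\Lmax)$, uniformly over all $\bt$ appearing here (the uniformity in \refL{LT1} is over $\ell\in[1,\Linf]$, and $\Lmax\ll\Linf$, so this applies).

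Next I would count the trees. By \refL{lem_bic_cardinal}, $|\Bicx(\ell_1,\ell_2)|\le 16(\ell_1+1)(\ell_2+1)(\min(\ell_1,\ell_2)+1) = O(\Lmax^3)$ uniformly for $\ell_1\in\Interv{i}$, $\ell_2\in\Interv{j}$, since all three factors are $O(\Lmax)$. The index sets $\Interv{i}$ and $\Interv{j}$ each contain $O(\Lmax/\Mmax) = O(\Lmax)$ integers (with $\Mmax$ fixed), so the total number of pairs $(\ell_1,\ell_2)$ is $O(\Lmax^2)$, and therefore $|\Bic(i,j)| = O(\Lmax^5)$. Putting the two estimates together,
\begin{equation*}
\E\lrsqpar{\sum_{\bt\in\Bic(i,j)}N_\bt(\T)}
\le |\Bic(i,j)|\cdot \max_{\bt\in\Bic(i,j)}\E N_\bt(\T)
= O(\Lmax^5)\cdot O(n\Lmax)
= O(n\Lmax^6).
\end{equation*}
Finally, since by the assumptions on $g=g_n$ we have $\Lmax\to\infty$ and, more to the point, $\Lmax^2 = n/(12g)\to\infty$ (as $g=o(n)$), it follows that $n\Lmax^6 = (n^2\Lmax^4)\cdot(\Lmax^2/n) = o(n^2\Lmax^4)$, which gives the second equality in the statement.

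I do not expect any serious obstacle here; this is a counting argument assembled from lemmas already proved. The only points requiring a little care are: (i) checking that the sizes $\ell$ of the trees in $\Bic(i,j)$ really do lie in the range $[1,\Linf]$ so that the uniform estimate of \refL{LT1} applies — this is immediate since $\ell = O(\Lmax) = o(\Linf)$; (ii) making sure the $o(1)$ in \refL{LT1} does not spoil the bound — it does not, since we only need an $O$-bound and the $o(1)$ is uniform; and (iii) tracking the dependence on the fixed parameters $i,j,\Mmax$, which only affects the implied constants. If one wanted to be slightly more careful about the degenerate case where a tree in $\Bicx(\ell_1,\ell_2)$ has $\ell_1+\ell_2=0$, one simply notes the paper's convention (paths have positive length outside the proof of \refL{lem_bic_cardinal}) makes this a non-issue for the range of $i,j$ of interest, or one absorbs it into the constant.
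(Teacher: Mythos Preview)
Your proof is correct and follows essentially the same route as the paper: bound $|\bt|=O(\Lmax)$ for $\bt\in\Bic(i,j)$, apply \refL{LT1} to get $\E N_\bt(\T)=O(n\Lmax)$ uniformly, bound $|\Bic(i,j)|=O(\Lmax^5)$ via \refL{lem_bic_cardinal} and the $O(\Lmax^2)$ choices of $(\ell_1,\ell_2)$, and multiply. The only cosmetic point is that your final sentence invokes $\Lmax^2\to\infty$ when what is actually used is $\Lmax^2/n=1/(12g)\to0$ (equivalently $\Lmax=o(\sqrt n)$), which is the same observation the paper makes.
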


\begin{proof}
A tree $\bt$ in $\Bic(i,j)$ will have size at most $\frac{i+j+2}{\Mmax}\Lmax
= O(\Lmax)$.
Hence, by \eqref{Bic} and
Lemma~\ref{lem_bic_cardinal}, the cardinality of $\Bic(i,j)$ is bounded
by $\left(\frac{\Lmax}{\Mmax}+1\right)^2\times O\bigpar{\Lmaxx^3}
=O\bigpar{\Lmaxx^5}$. 
Since $\Lmax=o(\Linf)$, 
we may also use~\eqref{eq_Tnl_equiv}.
Consequently, 
\begin{align}
\E\lrsqpar{\sum_{\bt\in\Bic(i,j)}N_\bt(\T)}
&=\sum_{\bt\in\Bic(i,j)}\E N_\bt(\T) \notag\\
&=\sum_{\bt\in\Bic(i,j)} (1+o(1)) 2n|\bt|\notag\\
&= |\Bic(i,j)|\cdot O(n\Lmax)\notag\\
&=O(n\Lmaxx^6),
\end{align}
and we conclude by recalling that $\Lmax=o(\sqrt n)$.
\end{proof}
Here is a similar lemma that uses Lemma \ref{lem_union_endpoints}
and the notation 
there.
\begin{lemma}\label{lem_uniendp_sum}
For every fixed $q$, $w$, and $c$, we have 
\begin{equation}\label{alla}
\E\lrsqpar{\sum_{\bt\in\PU_{q,w}(c\Lmax)}N_\bt(\T)}
=o\bigpar{ n\Lmaxx^{2w-2} \log g}.
\end{equation}
\end{lemma}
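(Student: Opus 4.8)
The plan is to combine the cardinality bound from \refL{lem_union_endpoints}\ref{i3} with the mean-occurrence estimate from \refL{LT1}, exactly as in the proof of \refL{lem_bic_sum}. First I would note that every tree $\bt\in\PU_{q,w}(c\Lmax)$ has size $|\bt|\le qc\Lmax=O(\Lmax)$, since $\bt$ is a union of $q$ paths each of length at most $c\Lmax$. In particular $|\bt|=o(\Linf)$, so \eqref{eq_Tnl_equiv} applies uniformly and gives $\E N_\bt(\T)=(1+o(1))2n|\bt|=O(n\Lmax)$ for each such $\bt$. By \refL{lem_union_endpoints}\ref{i3}, $|\PU_{q,w}(c\Lmax)|\le C_{q,w}(c\Lmax)^{2w-3}=O(\Lmax^{2w-3})$.

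Then I would write, by linearity of expectation,
\begin{align}
\E\lrsqpar{\sum_{\bt\in\PU_{q,w}(c\Lmax)}N_\bt(\T)}
=\sum_{\bt\in\PU_{q,w}(c\Lmax)}\E N_\bt(\T)
\le |\PU_{q,w}(c\Lmax)|\cdot O(n\Lmax)
=O\bigpar{n\Lmax^{2w-2}}.
\end{align}
Finally, to get the stated bound with the extra factor $\log g$, it suffices to observe that $O\bigpar{n\Lmax^{2w-2}}=o\bigpar{n\Lmax^{2w-2}\log g}$ since $g\to\infty$ and hence $\log g\to\infty$. (Here one uses the hypothesis $g=g_n\to\infty$ from the running assumptions; the constant implied in the $O$ depends only on the fixed $q,w,c$, so dividing by $\log g\to\infty$ kills it.)

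I do not expect any real obstacle: the lemma is essentially a packaging of the two preceding lemmas, and the only thing to be careful about is checking that the size bound $|\bt|=O(\Lmax)$ is uniform over $\PU_{q,w}(c\Lmax)$ so that \refL{LT1} can be applied with a single $o(1)$, and that $O(\Lmax)=o(\Linf)$, which holds because $\Lmax=o(\Linf)$ by \eqref{Linf}. The appearance of $\log g$ in the statement (rather than a sharp $O$) is presumably for convenience in later applications, so I would simply absorb the constant into it as above.
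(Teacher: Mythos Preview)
Your proposal is correct and follows essentially the same approach as the paper's own proof: bound $|\bt|\le qc\Lmax=o(\Linf)$, apply \eqref{eq_Tnl_equiv} uniformly, multiply by the cardinality bound $|\PU_{q,w}(c\Lmax)|=O(\Lmax^{2w-3})$ from \refL{lem_union_endpoints}\ref{i3}, obtain $O(n\Lmax^{2w-2})$, and finally absorb the extra $\log g$ via $g\to\infty$. Your remark that the $\log g$ is merely for later convenience is exactly right.
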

\begin{proof}
Since $\bt\in\PU_{q,w}(c\Lmax)$ implies $|\bt|\le qc\Lmax=o(\Linf)$, 
we have, 
by \eqref{eq_Tnl_equiv} and Lemma~\ref{lem_union_endpoints},
\begin{align}
\E\lrsqpar{\sum_{\bt\in\PU_{q,w}(c\Lmax)}N_\bt(\T)}
&=\sum_{\bt\in\PU_{q,w}(c\Lmax)} \E N_\bt(\T)\notag\\
&=\sum_{\bt\in\PU_{q,w}(c\Lmax)}(1+o(1))2n|\bt|\notag\\
&\leq (1+o(1))|\PU_{q,w}(c\Lmax)|\, 2n qc\Lmax\notag\\
&=O(n\Lmaxx^{2w-2})
,\end{align}
which implies \eqref{alla}.
\end{proof}

\begin{lemma}\label{lem_cv_paths}
For every fixed  $i\ge0$,
\begin{equation}\label{allb}
\frac{P_{i}(\T)}{n\Lmaxx^2}\pto \frac{2i+1}{\Mmax^2}.
\end{equation}
\end{lemma}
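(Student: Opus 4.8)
The plan is to use the second-moment method. Recall that $P_i(\T) = \sum_\bt N_\bt(\T)$, where the sum is over all paths $\bt$ of size $\ell \in \Interv{i}$; there are roughly $\frac{1}{\Mmax}\Lmax$ such sizes $\ell$ available. First I would compute the mean: by \refL{LT1} (specifically \eqref{eq_Tnl_equiv}), $\E N_\bt(\T) = (1+o(1))\,2\ell n$ uniformly for $\ell \in [1,\Linf]$, and since every path of length $\ell$ in a tree is its own unique occurrence as a rooted subtree (a path of length $\ell$ has exactly $2(\ell+1)$... actually one should be careful: $N_\bt$ counts occurrences of the \emph{rooted} tree $\bt$, and the number of rooted path-trees of a given length is constant), summing over all $\ell \in \Interv{i}$ and over the $2\ell+1$ ways... more precisely, one counts that the number of (rooted) path-patterns of length $\ell$ is $2\ell+1$ (a path of length $\ell$ has $2\ell$ corners, plus... ), so that $\E P_i(\T) = \sum_{\ell \in \Interv{i}} (2\ell+1)(1+o(1))2\ell n$. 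Since $\Interv{i} = [\tfrac{i}{\Mmax}\Lmax, \tfrac{i+1}{\Mmax}\Lmax)$ has length $\tfrac{1}{\Mmax}\Lmax$ and $\ell \asymp \Lmax$ throughout, a Riemann-sum estimate gives $\E P_i(\T) = (1+o(1)) \cdot \frac{1}{\Mmax}\Lmax \cdot c \cdot \Lmax^2 n / \Mmax^2$ for the appropriate constant; pinning down the constant requires integrating, and I expect to get $\E P_i(\T) = (1+o(1)) \frac{2i+1}{\Mmax^2} n\Lmax^2$, matching the claimed limit. (One must check the combinatorial count of rooted path patterns of each length carefully so the constant comes out right.)

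Next I would bound the variance, or rather show $\E[P_i(\T)^2] = (1+o(1))(\E P_i(\T))^2$, which together with the mean computation gives convergence in probability via Chebyshev. Expanding the square, $\E[P_i(\T)^2] = \sum_{\bt_1,\bt_2} \E[N_{\bt_1}(\T)N_{\bt_2}(\T)]$, where both sums are over path patterns of sizes in $\Interv{i}$. I split this according to whether the two marked path-copies are vertex-disjoint or not. For the disjoint contribution, \refL{lem_non_intersecting_trees} (specifically \eqref{eq_Tnl1l2_equiv}) gives $\E[N_{\bt_1}(\T)N_{\bt_2}(\T)\mathbf{1}\{\text{disjoint}\}] \le (1+o(1))(2\ell_1 n)(2\ell_2 n)$ uniformly, so this part sums to at most $(1+o(1))(\E P_i(\T))^2$. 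For the non-disjoint contribution, two intersecting path-copies form a tree that is a union of two paths, i.e.\ an element of $\Bicx(\ell_1,\ell_2)$ for $\ell_1,\ell_2 \in \Interv{i}$, hence of $\Bic(i,i)$; so this contribution equals $\E[\sum_{\bt \in \Bic(i,i)} (\text{multiplicity}) N_\bt(\T)]$, and by \refL{lem_bic_sum} it is $O(n\Lmax^6) = o(n^2\Lmax^4) = o((\E P_i(\T))^2)$. (A small subtlety: when counting ordered pairs $(\bt_1,\bt_2)$ of path patterns whose union is a fixed bicolored tree, there are only $O(1)$ such pairs per bicolored tree, so \refL{lem_bic_sum} applies up to a constant factor.) Combining, $\Var(P_i(\T)) = o((\E P_i(\T))^2)$.

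Finally, Chebyshev's inequality gives $P_i(\T)/\E P_i(\T) \pto 1$, and dividing by $n\Lmax^2$ and inserting the value of $\E P_i(\T)$ computed above yields \eqref{allb}. The main obstacle I anticipate is not the probabilistic machinery — which is a routine first/second moment argument — but rather getting the \emph{exact constant} $\frac{2i+1}{\Mmax^2}$ right: this requires (a) correctly enumerating the rooted path patterns of each length $\ell$ (there should be $2\ell+1$ of them, since a path with $\ell$ edges has $2\ell$ corners and a doubling at one of them... this needs care), and (b) evaluating the resulting sum $\sum_{\ell \in \Interv{i}} (2\ell+1)\cdot 2\ell$ as a Riemann sum, where the leading term is $\int$-type and gives, after dividing by $n\Lmax^2$, the value $(2i+1)/\Mmax^2$ in the limit $\Lmax \to \infty$. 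I would double-check this constant by a dimensional/consistency check against later uses of the lemma (e.g.\ that summing $(2i+1)/\Mmax^2$ over $i$ from $0$ to $\Mmax-1$ behaves sensibly as $\Mmax \to \infty$).
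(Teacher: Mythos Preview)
Your strategy---first moment via \refL{LT1}, second moment split into disjoint pairs (\refL{lem_non_intersecting_trees}) versus intersecting pairs (\refL{lem_bic_sum}), then Chebyshev---is exactly the paper's. The one point to correct is the spurious factor $(2\ell+1)$ you tentatively insert in the mean: in the definition $P_i(T)=\sum_\bt N_\bt(T)$ the sum has \emph{one} term per length $\ell\in\Interv{i}$, and for that $\bt$ the quantity $N_\bt(T)$ already counts all oriented length-$\ell$ paths in $T$ (equivalently, $T_{n,\ell}$ in \refL{LT1} depends only on $\ell=|\bt|$, not on any choice of root corner for the pattern). Hence
\[
\E P_i(\T)=\sum_{\ell\in J_i}(1+o(1))\,2\ell n,
\]
and summing the arithmetic progression over $\ell\in\bigl[\tfrac{i}{\Mmax}\Lmax,\tfrac{i+1}{\Mmax}\Lmax\bigr)$ gives $(1+o(1))\,n\Lmax^2(2i+1)/\Mmax^2$ directly---no Riemann sum and no enumeration of rooted path-patterns is needed. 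With this clarification your argument is the paper's proof.
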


\begin{proof}
We will prove a slightly stronger result, namely $L^2$ convergence.

Let $J_i:=\Interv{i}$.
Using~\eqref{Pit} and \eqref{eq_Tnl_equiv}, we have 
\begin{equation}\label{allc}
\E\bigsqpar{P_{i}(\T)}
=\sum_{\ell\in J_i}\frac{T_{n,\ell}}{\Cat{n}}
=\bigpar{1+o(1)}\sum_{\ell\in J_i}2\ell n
\sim n\Lmaxx^2\frac{2i+1}{\Mmax^2}.
\end{equation}

Now, let us count pairs of paths in $\T$, in order to estimate 
$\E\bigsqpar{P_i(\T)^2}$. There are two cases:
\begin{enumerate}
\item 
Two disjoint paths. Such pairs are enumerated by
Lemma~\ref{lem_non_intersecting_trees}, 
which implies that
the total expectation of the number of such pairs is
\begin{align}
 \sum_{\ell_1,\ell_2\in J_i} \frac{T_{n,\ell_1,\ell_2}}{\Cat{n}}
\le \bigpar{1+o(1)}\sum_{\ell_1,\ell_2\in J_i}(2\ell_1 n)(2\ell_2 n),
\end{align}
which by comparison with \eqref{allc} is $\;\sim \bigpar{\E P_i(\T)}^2$.

\item Two paths that intersect in at least one vertex. 
Their union is then a bicolored tree, and by Lemma~\ref{lem_bic_sum}, the
expectation of the number of such pairs is $o(n^2\Lmaxx^4)$,
which by \eqref{allc} is $o \bigpar{\E P_i(\T)}^2$.
\end{enumerate}
In summary, this establishes that
\begin{equation}
\E\bigsqpar{P_i(\T)^2}\sim\bigpar{ \E P_i(\T)}^2,
\end{equation}
which implies $P_i(\T)/\E P_i(\T)\to1$ in $L^2$.
By \eqref{allc}, this yields \eqref{allb} in  $L^2$, and thus in probability.
\end{proof}

\begin{lemma}\label{LpM}
For every fixed  $\m \in\setM$,
\begin{equation}
\frac{\pM(\T)}{ \prod_{i=1}^{s(\m)}P_{m_i}(\T)}
\pto1.
\end{equation}
\end{lemma}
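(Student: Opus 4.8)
The plan is to combine the deterministic two-sided bound from~\eqref{eq_bound_disjoint_paths} with the asymptotic estimates already established for $P_i(\T)$ and for the bicolored sums $\sum_{\bt\in\Bic(i,j)}N_\bt(\T)$. First I would recall that~\eqref{eq_bound_disjoint_paths} gives, pointwise for every tree $T$,
\begin{equation*}
1\geq \frac{\pM(T)}{\prod_{i=1}^{s(\m)}P_{m_i}(T)}\geq 1-\sum_{i,j}\frac{\sum_{\bt\in\Bic(m_i,m_j)}N_\bt(T)}{P_{m_i}(T)P_{m_j}(T)},
\end{equation*}
where the sum over $i,j$ runs over the finitely many indices (with multiplicity) appearing in $\m$. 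So the upper bound $\pM(\T)/\prod_i P_{m_i}(\T)\le 1$ is trivial, and the whole task is to show that the subtracted error term converges to $0$ in probability.

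For the error term, fix a pair $i,j$. By \refL{lem_cv_paths}, $P_{m_i}(\T)/(n\Lmaxx^2)\pto (2m_i+1)/\Mmax^2$ and similarly for $m_j$; in particular $P_{m_i}(\T)P_{m_j}(\T)$ is, whp, of order $n^2\Lmaxx^4$, bounded below by $c\, n^2\Lmaxx^4$ for a suitable constant $c=c(i,j,\Mmax)>0$. Meanwhile, by \refL{lem_bic_sum}, $\E\bigsqpar{\sum_{\bt\in\Bic(m_i,m_j)}N_\bt(\T)}=o(n^2\Lmaxx^4)$, so by Markov's inequality $\sum_{\bt\in\Bic(m_i,m_j)}N_\bt(\T)=\op\bigpar{n^2\Lmaxx^4}$. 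Dividing the (small) numerator by the (large) denominator — more carefully, on the whp event that $P_{m_i}(\T)P_{m_j}(\T)\ge c\,n^2\Lmaxx^4$ — gives that the $(i,j)$ summand is $\op(1)$. Summing over the finitely many pairs $i,j$ shows the entire error term in~\eqref{eq_bound_disjoint_paths} is $\op(1)$, and hence $\pM(\T)/\prod_i P_{m_i}(\T)\pto 1$.

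The one point requiring a little care — and the only place where a naive argument could go wrong — is that the numerator and denominator of each summand $\sum_{\bt\in\Bic(m_i,m_j)}N_\bt(\T)/\bigpar{P_{m_i}(\T)P_{m_j}(\T)}$ are not independent, so one cannot simply multiply limits of a ratio. The clean way is to work on the high-probability event $A_n:=\bigcap_{i}\set{P_{m_i}(\T)\ge \tfrac{c_i}{2} n\Lmaxx^2}$ with $c_i:=(2m_i+1)/\Mmax^2$, which has probability $1-o(1)$ by \refL{lem_cv_paths}; on $A_n$ the ratio is at most a constant times $n\qww\Lmaxx\qqw\qqw$... rather, at most $C(\m,\Mmax)\, n\qww \Lmaxx\qqw$ times $\sum_{\bt\in\Bic(m_i,m_j)}N_\bt(\T)/(n\Lmaxx^2)$, wait — more simply, on $A_n$ we have $\sum_{i,j}\sum_{\bt\in\Bic(m_i,m_j)}N_\bt(\T)/\bigpar{P_{m_i}(\T)P_{m_j}(\T)}\le C\,n\qww\Lmaxx\qqww \sum_{i,j}\sum_{\bt\in\Bic(m_i,m_j)}N_\bt(\T)$ with $C=C(\m,\Mmax)$, and the right side is $\op(1)$ by \refL{lem_bic_sum} and Markov. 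I expect no serious obstacle; the proof is essentially a one-paragraph consequence of the preceding lemmas.
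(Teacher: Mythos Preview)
Your proposal is correct and follows essentially the same approach as the paper: use the deterministic sandwich \eqref{eq_bound_disjoint_paths}, then show each error summand is $o_p(1)$ via \refL{lem_bic_sum} (Markov on the numerator) and \refL{lem_cv_paths} (denominator of exact order $n^2\Lmaxx^4$). The paper's version is just more compact --- it notes directly that $X_n\pto c>0$ and $Y_n\pto 0$ imply $Y_n/X_n\pto 0$, so the explicit high-probability event $A_n$ you introduce is unnecessary (though not wrong).
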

\begin{proof}
For fixed $i$ and $j$, we have by Lemma~\ref{lem_bic_sum} and the Markov
inequality 
\begin{equation}
\frac{1}{n^2\Lmaxx^4}\sum_{\bt\in\Bic(i,j)}N_\bt(\T)\pto0.
\end{equation}
Hence by Lemma~\ref{lem_cv_paths}, for every fixed $i$ and $j$,
\begin{equation}
\frac{\sum_{\bt\in\Bic(i,j)}N_\bt(\T)}{P_i(\T)P_j(\T)}\pto0,
\end{equation}
and we conclude by using the inequalities
\eqref{eq_bound_disjoint_paths}. 
\end{proof}

\section{Cycles in \ctree{s}}\label{Sctrees}

\subsection{Definitions}
If $(T,\sigma)$ is a \ctree, then any simple cycle of length $\ell$ in its
underlying graph can be decomposed into a list $\bP=(p_1,p_2,\ldots,p_k)$ 
of non-intersecting simple paths in $T$ 
such that 
\begin{PXenumerate}{C}
\item\label{C1} $\sum_{i=1}^k |p_i|=\ell$;
\item\label{C2} $\eend(p_i)\sim \start(p_{(i+1 \mod k)})$ for all $i$;
\item \label{C3} 
for every other pair of vertices $v,v'\in (p_1,p_2,\ldots,p_k)$, we
  have $v\not\sim v'$. 
\end{PXenumerate}
This decomposition is unique up to cyclically reordering the $p_i$, or
reversing them all and their order, or a combination of both.
Conversely, every list satisfying \ref{C1}--\ref{C3} yields a simple cycle
in the underlying graph.

For two lists $\bP,\bP'\in\Poo(T)$, 
we write $\bP\simeqx \bP'$ if and only if $\bP'$ can be obtained from 
$\bP$ by cyclically reordering its paths, or
reversing them all and their order, or a combination of both.
Note that $\bP\simeqx \bP'$ entails $s(\bP)=s(\bP')$ and
$\ell(\bP)=\ell(\bP')$, and that each list $\bP$ is in an equivalence class
$[\bP]$ with exactly $2s(\bP)$ elements.
Let $\hPoo(T)$ be a subset of $\Poo(T)$ obtained by selecting exactly one
element from each equivalence class in $\Poo(T)$.

Given also a \cperm{} $\gs$ of the vertex set of $T$, 
so that $(T,\gs)$ is a \ctree, 
let $\CC(T,\gs)$ be the set of lists $\bP\in\hPoo(T)$
that satisfy \ref{C2}--\ref{C3} above. 
There is thus a 1--1 correspondence between $\CC(T,\gs)$ and the set of
simple cycles in the underlying graph.
Let also, recalling \eqref{PPkm}, 
\begin{align}
\CCkm(T,\gs)&:=\CC(T,\gs)\cap\PPkm(T),\label{CCkm}
\\\label{CCkab}
\CCkab(T,\gs)&:=
\bigcup_{a\leq m< b} \CCkm(T,\gs),
\end{align}
and denote the cardinalities of these sets by
$\cckm(T,\gs):=|\CCkm(T,\gs)|$
and
$\cckab(T,\gs):=|\CCkab(T,\gs)|$.

Furthermore,
let $\tCCkm(T,\gs)$ be the set of lists $\bP\in\PPkm(T)\cap\hPoo(T)$
that satisfy \ref{C2}.
Thus $\tCCkm(T,\gs)\supseteq\CCkm(T,\gs)$.
Let further 
$\tcckm(T,\gs):=|\tCCkm(T,\gs)|$,
and note that $\tcckm(T,\gs)\ge\cckm(T,\gs)$.

We ultimately want to work with random trees, but it will be easier to work
with deterministic sequences of trees at first. We say that a sequence
$(T_n)$ is a \emph{good sequence of trees} if for all $n$, $T_n$ is a tree
of size $n$ and that the  following properties hold:
for every fixed $M\ge1$
and every fixed $\m\in \setM$,
\begin{equation}\label{good_paths}
\pM(T_n)\sim \left(\frac{n\Lmaxx^2}{\Mmax^2}\right)^{s(\m)}\Pm
=\left(\frac{n^2}{12\Mmax^2 g}\right)^{s(\m)}\Pm
,\end{equation}
and for each fixed $q$, $w$, and $c$,
with $\PU_{q,w}(\ell)$ defined in \refD{DPU},
\begin{equation}\label{good_unionpaths}
\sum_{\bt\in\PU_{q,w}(c\Lmax)}N_\bt(T_n) = O\bigpar{ n \Lmaxx^{2w-2}\log g}.
\end{equation}

We will see in \refL{Lgood} that we may assume that the sequence $\T_n$ of
random trees is good. 

\subsection{Expectation}

\begin{lemma}\label{Lexpectation}
Let $(T_n)$ be a good sequence of trees. Then, for every $M\ge1$, $m\ge0$
and $k\ge1$, 
as \ntoo,
\begin{equation}\label{lc2a}
\E\bigsqpar{\cckm(T_n,\sig)}\to 
\sum_{|\bm|=m,s(\bm)=k}
\frac{\Pm}{2k}\left(\frac{1}{2\Mmax^2}\right)^{k}
=:\gL_k(m).
\end{equation}
Furthermore,
\begin{equation}\label{lc2b}
\E\bigsqpar{\tcckm(T_n,\sig)-\cckm(T_n,\sig)}\to 0.
\end{equation}
\end{lemma}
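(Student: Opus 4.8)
The plan is to compute $\E[\cckm(T_n,\sig)]$ by summing over all lists $\bP$ of pairwise disjoint paths of the right type and evaluating the probability that $\sig$ glues them into a cycle. First I would write
\[
\E\bigsqpar{\cckm(T_n,\sig)}
=\sum_{\bP\in\PPkm(T_n)\cap\hPoo(T_n)}\P\bigpar{\bP\text{ satisfies \ref{C2}--\ref{C3}}},
\]
so there are $\frac1{2k}$ times as many terms as $\pM(T_n)$ summed over the relevant $\bm$, by the remark that each equivalence class has $2s(\bP)=2k$ elements. For a \emph{fixed} list $\bP$ with $2k$ distinct endpoints, condition \ref{C2} asks that $k$ specified pairs of vertices of $T_n$ each lie in a common cycle of $\gs$; by \refL{LXC} (with the symmetry remark, applied to the $2k$ endpoints of $\bP$, which are $2k$ fixed vertices of the vertex set of size $n+1$), the probability that these $k$ pairs lie in $k$ \emph{distinct} cycles is $\bigpar{1+o(1)}(6g/(n+1)^2)^k=\bigpar{1+o(1)}(6g/n^2)^k$, uniformly over $\bP$. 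Since $\Lmaxx^2=n/(12g)$, this equals $\bigpar{1+o(1)}\bigpar{2\Mmax^{-2}}^k\cdot\bigpar{n\Lmaxx^2/\Mmax^2}^{-k}\cdot\Mmax^{-2k}\cdot(\text{bookkeeping})$; tracking the constants, $(6g/n^2)^k=\bigpar{1+o(1)}\bigpar{\tfrac1{2\Mmax^2}}^k\bigpar{\tfrac{n\Lmaxx^2}{\Mmax^2}}^{-k}\cdot\Mmax^{2k}/\Mmax^{2k}$, i.e.\ it is exactly the reciprocal of the leading factor in \eqref{good_paths} times $(2\Mmax^2)^{-k}$. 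Multiplying by $\pM(T_n)\sim (n\Lmaxx^2/\Mmax^2)^k\,\Pm$ from the good-sequence hypothesis \eqref{good_paths}, dividing by $2k$ for the equivalence classes, and summing over $\abs\bm=m$, $s(\bm)=k$ (a finite sum), gives precisely $\sum\frac{\Pm}{2k}\bigpar{\tfrac1{2\Mmax^2}}^k=\gL_k(m)$, which is \eqref{lc2a}.

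For \eqref{lc2b}, note $\tcckm-\cckm$ counts lists $\bP\in\PPkm(T_n)\cap\hPoo(T_n)$ satisfying \ref{C2} but \emph{not} \ref{C3}, i.e.\ where either two endpoints beyond the $k$ prescribed pairs lie in a common cycle, or two of the prescribed pairs merge into one cycle, or some non-endpoint vertex on a path is $\sim$ another vertex of $\bP$. In every such case, among the $2k$ endpoints of $\bP$, either there are strictly fewer than $k$ cycles carrying them, or some cycle of $\gs$ contains more than two of the $\le 2k$ relevant fixed vertices. Wait — more carefully, the right tool is \refL{LXD}: for $\bP$ fixed, the vertices involved in violations of \ref{C3} are $O(1)$ in number (the $2k$ endpoints plus at most $O(\ell(\bP))$ interior vertices), and the probability that a set of fixed vertices is arranged in $\le k-1$ cycles, or with a cycle meeting $\ge 3$ of them, is $O(g^{r}/n^{(\text{number of fixed pts})})$ with $r\le k-1$; since each merge or extra coincidence reduces the power of $n$ in the numerator by at least $2$ while reducing the $g$-power by only $1$, and $g=o(n)$, the resulting bound is $o$ of $(6g/n^2)^k$ per list. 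The subtlety is the interior-vertex coincidences: here I would instead bound $\E[\tcckm-\cckm]$ by first summing over the \emph{path tree} structure — the union of the $k$ paths together with the coincidence is a tree $\bt$ with at most $2k-1$ or fewer "free" endpoints, counted by $\PU_{q,w}(c\Lmax)$ in \refD{DPU} — use the good-sequence bound \eqref{good_unionpaths} to control $\sum_\bt N_\bt(T_n)=O(n\Lmaxx^{2w-2}\log g)$, and multiply by the $\sig$-probability from \refL{LXD}, which is $O(g^{w'}/n^{w''})$ with one more factor of $g/n^2$ saved than in the main term; one then checks the powers of $n$, $g$, $\Lmax=\sqrt{n/12g}$, and $\log g$ combine to $o(1)$, using $g\to\infty$ only to absorb the $\log g$.

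The main obstacle I expect is the second part: carefully organizing the case analysis of how \ref{C3} can fail and matching each case to the right combination of a combinatorial count of path-trees (via \refL{lem_union_endpoints}/\eqref{good_unionpaths}) and a cycle-structure probability (via \refL{LXD}), so that every term genuinely carries an extra $g/n^2$ (or better) relative to the main term and the $\log g$ from \eqref{good_unionpaths} is harmless. The first part \eqref{lc2a} is essentially bookkeeping once the uniformity in \refL{LXC} and the good-sequence asymptotic \eqref{good_paths} are in hand; the only mild care needed there is that the vertex set of $T_n$ has $n+1$ elements, not $n$ — but $(6g/(n+1)^2)^k\sim(6g/n^2)^k$, so this is absorbed in the $1+o(1)$.
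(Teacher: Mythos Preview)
Your approach to \eqref{lc2a} is essentially the paper's: write the expectation as $\frac{1}{2k}\sum_{\bP}\pi(\bP)$, use \refL{LXC} for the main term $\sim(6g/n^2)^k$, and multiply by $\pM(T_n)$ from \eqref{good_paths}. One small organizational point: the paper actually computes $\E[\tcckm]$ first (using only \ref{C2}), splitting into ``$k$ distinct cycles'' via \refL{LXC} and ``fewer cycles'' via \refL{LXD}, and then deduces \eqref{lc2a} from \eqref{lc2b}. Your version jumps straight to $\E[\cckm]$, which is fine once \eqref{lc2b} is in hand, but the logic is cleaner the paper's way.

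The genuine gap is in your plan for \eqref{lc2b}. You propose to control interior-vertex coincidences by invoking path trees $\PU_{q,w}$ and the good-sequence bound \eqref{good_unionpaths}. This is the wrong tool: the paths $p_1,\dots,p_k$ in a list $\bP\in\PPkm(T_n)$ are, by the very definition of $\Poo(T_n)$, pairwise \emph{vertex-disjoint}. Their union in $T_n$ is a forest with $k$ components, not a connected path tree, so \refD{DPU} and \eqref{good_unionpaths} simply do not apply. The ``coincidence'' $v\sim v'$ you want to exploit is a relation in the permutation $\sig$, not an incidence in $T_n$; merging $v$ and $v'$ does not produce a subtree of $T_n$. (The path-tree machinery \emph{is} used in the paper, but only later in \refL{Lmom}, where several distinct lists $\bP(i,j)$ may have paths sharing endpoints across lists.)

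The paper's argument for \eqref{lc2b} is much more direct. Fix $\bP$ with $s(\bP)=k$; it has at most $\ell(\bP)+k=O(\Lmax)$ vertices in total. If \ref{C2} holds but \ref{C3} fails, then either (i) the $2k$ endpoints lie in $\le k-1$ cycles of $\sig$, which has probability $o((g/n^2)^k)$ by \refL{LXD}; or (ii) some interior vertex $v$ lies in the same cycle as one of the endpoint pairs, so $2k+1$ specified vertices lie in $\le k$ cycles, which by \refL{LXD} has probability $O(g^k/n^{2k+1})$, and a union bound over the $O(\Lmax)=O(\sqrt{n/g})$ choices of $v$ gives $o((g/n^2)^k)$; or (iii) two interior vertices $v,v'$ lie in a common cycle, so $2k+2$ specified vertices lie in $\le k+1$ cycles, probability $O(g^{k+1}/n^{2k+2})$ by \refL{LXD}, and a union bound over $O(\Lmax^2)=O(n/g)$ pairs again gives $o((g/n^2)^k)$. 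Thus $\tpi(\bP)-\pi(\bP)=o((g/n^2)^k)$ uniformly in $\bP$, and summing via \eqref{good_paths} yields \eqref{lc2b}. No path-tree counting, no $\log g$ factor; the bound \eqref{good_unionpaths} is not needed for this lemma at all.
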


\begin{proof}
We start by estimating $\E\tcckm(T_n,\sig)$. 
For each given list $\bP=(p_1,\dots,p_k)\in\hPoo(T_n)$,
let $\tpi(\bP)$ be the probability  that \ref{C2} holds. 
Then, by definitions and symmetry,
\begin{align}\label{lc2c}
\E\bigsqpar{\tcckm(T_n,\sig)}
= \sum_{\bP\in\PPkm(T_n)\cap\hPoo(T_n)}\tpi(\bP)
=\frac{1}{2k}\sum_{\bP\in\PPkm(T_n)}{\tpi(\bP)}.
\end{align}

To find $\tpi(\bP)$, note that
we may relabel the $2k$ endpoints in $\Ext(\bP)$ as $1,\dots,2k$ 
in an order such  that \ref{C2} becomes $2i-1\sim 2i$ for $i=1,\dots,k$,
\ie, that $2i-1$ and $2i$ belong to the same cycle in $\bgs$.
There are two cases: either these $k$ cycles are distinct, or at least two
of them coincide.
The first event is 
$\cE_{n,g}^{(k)}$ in \refL{LXC}, and that lemma shows that
its probability is
\begin{equation}\label{ERII}
\P_{n,g}^{(k)}=
\P\bigpar{\cE_{n,g}^{(k)}}
\sim \left(\frac{6g}{n^2}\right)^k.
\end{equation}
The second event means that $1,\dots,2k$ belong to at most $k-1$ different
cycles of $\bgs$. Hence, \refL{LXD} shows that the
probability of this event is
\begin{align}\label{lc2d}
  \sum_{r=1}^{k-1}O\left(\frac{g^r}{n^{2k}}\right)
=o\Bigpar{\left(\frac{g}{n^2}\right)^{k}}.
\end{align}
Summing \eqref{ERII} and \eqref{lc2d}, we see that 
\begin{align}\label{pi1}
  \tpi(\bP)\sim \left(\frac{6g}{n^2}\right)^k
,\end{align}
uniformly for all $\bP$ with  $s(\bP)=k$.

We develop the sum in \eqref{lc2c} 
using \eqref{pi1} and \eqref{good_paths}, 
noting that if $\bP\in\PPm$, then $s(\bP)=s(\bm)$;
we thus obtain
\begin{align}\label{pia}
  \E\bigsqpar{\tcckm(T_n,\bgs)} &
= \frac{1}{2k}\summmk \pM(T_n)
(1+o(1))\left(\frac{6g}{n^2}\right)^{k}
\notag\\&
= \frac{1}{2k}\left(\frac{1}{2\Mmax^2}\right)^{k}\summmk\Pm+o(1).
\end{align}

Next, we consider the difference $\tcckm(T_n,\bgs)-\cckm(T_n,\bgs) $.
A given list $\bP=(p_1,\dots,p_k)\in\hPoo(T_n)$ belongs to 
$\tCCkm(T_n,\bgs)\setminus \CCkm(T_n,\bgs)$ if
it satisfies \ref{C2} but not \ref{C3}. 

This means that  one of the following holds:
\begin{itemize}
\item The $2k$ endpoints belong to at most $k-1$ cycles, which
  happens with probability $o\bigpar{\left(\xfrac {g }{n^2}\right)^k}$
  by~\eqref{lc2d}.
\item There exists a vertex $v$ in $\bP\setminus\Ext(\bP)$ such that $v$
  belongs to the same cycle as $\eend(p_i)$ and $\start(p_{i+1})$ for some
  $i$. Hence, we have $2k+1$ points belonging to $k$ cycles, which by
  Lemma~\ref{LXD}, happens with probability $O\left(\xfrac
    {g^k}{n^{2k+1}}\right)$ for a given $v$. There are 
$O(\Lmax)=O\left(\sqrt{\xfrac n g }\right)$ vertices in $\bP$; 
hence by a union bound, the probability
  of this event is  
\begin{equation}
O\left(\frac {g^k}{n^{2k+1}}\right)\times O\left(\sqrt{\frac n g }\right)=o\left(\left(\frac g {n^2}\right)^k\right).
\end{equation}
\item There exist two vertices $v,v'$ in $\bP\setminus\Ext(\bP)$ such that
  $v$ and $v'$ belongs to the same cycle. Hence, we have $2k+2$ points
  belonging to $k+1$ cycles, which by Lemma~\ref{LXD}, happens with
  probability $O\left(\xfrac {g^{k+1}}{n^{2k+2}}\right)$ for a given pair
  $v,v'$. There are $O\bigpar{\Lmaxx^2}=O\left(\xfrac n g \right)$ pairs of
  vertices in $\bP$; 
  hence by a union bound, the probability of this event is  
\begin{equation}
O\left(\frac {g^{k+1}}{n^{2k+2}}\right)\times O\left(\frac n g \right)
=o\left(\left(\frac g {n^2}\right)^k\right).
\end{equation}
\end{itemize}

Hence, letting $\pi(\bP)$ be the probability that both \ref{C2} and
\ref{C3} hold, we have
\begin{align}
  0\le \tpi(\bP)-\pi(\bP) 
=o\left(\left(\frac g {n^2}\right)^k\right),
\end{align}
uniformly for all $\bP$ with  $s(\bP)=k$.
Consequently,
arguing as in \eqref{pia}, but more crudely, using
again \eqref{good_paths},
\begin{align}\label{pib}
  \E\bigsqpar{\tcckm(T_n,\bgs) -\cckm(T_n,\bgs)} &
=\summmk\frac{ \pM(T_n)}{2k}
o\Bigpar{\Bigpar{\frac{g}{n^2}}^{k}}
=o(1).
\end{align}
The proof is completed by \eqref{pia} and \eqref{pib}.
\end{proof}

\subsection{Higher moments}
\begin{lemma}\label{Lmom}
Let $(T_n)$ be a good sequence of trees, 
let $(m_1,k_1),\dots,(m_q,k_q)$ be distinct pairs of integers in $\bbZgeo\times\bbZ_+$,
and let $r_1,r_2,\dots ,r_q$ be fixed positive integers, for some $q\ge1$.
Then
\begin{equation}\label{lc3}
\E\Bigsqpar{\prod_{i=1}^{q}\left(\tccx{m_i}_{k_i}(T_n,\bgs)\right)_{r_i}}
= \prod_{i=1}^{q}\left(\E\bigsqpar{\tccx{m_i}_{k_i}(T_n,\bgs)}\right)^{r_i}+o(1)
= \prod_{i=1}^{q}\gLxx{k_i}{m_i}^{r_i}+o(1)
.\end{equation}
\end{lemma}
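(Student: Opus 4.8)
The plan is to compute the mixed factorial moments by expanding each $\tcckm$ as a sum over lists of disjoint paths, as was done for the first moment in \refL{Lexpectation}, and then to show that the dominant contribution comes from configurations in which all the chosen paths (across all factors and all the $r_i$ copies) are pairwise vertex-disjoint and the $\sim$-relations they must satisfy involve $\sum_i r_i k_i$ mutually distinct cycles of $\bgs$. For such ``generic'' configurations the conditional probability (given $\bgs$) factorizes through \refL{LXC}/\refL{LXD} exactly as in \eqref{pi1}, giving a factor $\bigpar{6g/n^2}^{k_i}$ per path-list, and the count of configurations factorizes through \eqref{good_paths}; this produces precisely $\prod_i\bigpar{\E\tcckm[{m_i}]_{k_i}(T_n,\bgs)}^{r_i}$. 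So the real content is the error analysis: showing all non-generic configurations are $o(1)$.

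First I would set up notation: expand $\E\bigsqpar{\prod_i(\tcckm[{m_i}]_{k_i})_{r_i}}$ as a sum over tuples $(\bP^{(i,s)})_{1\le i\le q,\,1\le s\le r_i}$ of \emph{distinct} elements of $\hPoo(T_n)$, where $\bP^{(i,s)}\in\PPkm[{m_i}]_{k_i}(T_n)$, weighted by the probability that every $\bP^{(i,s)}$ satisfies \ref{C2}. Write $R:=\sum_i r_i$ for the number of path-lists and $K:=\sum_i r_i k_i$ for the total number of paths. I would split the tuples according to whether (a) the union $\bigcup_{i,s}\bP^{(i,s)}$ has all $2K$ endpoints distinct and the $K$ pairs $\set{\eend(p),\start(p')}$ forced by \ref{C2} land in $K$ distinct cycles of $\bgs$ — the generic case — or (b) some coincidence occurs: two endpoints equal, or an interior vertex of one path coincides with an endpoint or interior vertex of another, or two of the $\sim$-constraints are realized by the same cycle. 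In case (a) the conditional probability is $\prod_{i,s}\tpi(\bP^{(i,s)})\sim\bigpar{6g/n^2}^K$ by independence across disjoint vertex-sets (\refL{LXC} with $r=K$ gives exactly this asymptotic, and \refL{LXD} handles the ``distinct cycles'' refinement), and the number of generic tuples is asymptotically $\prod_i \ppkm[{m_i}]_{k_i}(T_n)^{r_i}$ by \eqref{good_paths} together with the fact that imposing disjointness between lists costs only a lower-order correction (this is the multi-list analogue of \refL{LpM}, controlled by \refL{lem_uniendp_sum} / \eqref{good_unionpaths}).

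The estimates for the bad configurations in case (b) are where the argument lives, and this is the step I expect to be the main obstacle. The idea is: any coincidence among the vertex sets of the $\bP^{(i,s)}$ forces their union to be a \emph{path tree} in the sense of \refD{DPU} with $q'\le K$ paths and with $w<2K$ distinct endpoints (or else produces a ``collapsed'' cycle structure with fewer than $K$ distinct cycles); by \refL{lem_union_endpoints}\ref{i3} and \eqref{good_unionpaths} the number of such configurations is $O\bigpar{n\Lmaxx^{2w-2}\log g}=O\bigpar{n\Lmaxx^{2K-4}\log g}$ for each ``shape'', which is a factor $\Lmaxx^{-2}/n=O(g/n^2)$ smaller than the $\Theta\bigpar{n^K\Lmaxx^{2K}}\cdot$... no — smaller by a factor of order $n\Lmaxx^2=\Theta(n^2/g)$ relative to the generic count $\Theta\bigpar{(n\Lmaxx^2)^K}$; meanwhile the probability weight is at most $\bigpar{Cg/n^2}^{K}$ by \refL{LXC}, or $O\bigpar{g^{K-1}/n^{2K}}$ if cycles collapse, by \refL{LXD}. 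Multiplying, every bad family contributes $o\bigpar{(g/n^2\cdot n\Lmaxx^2)^{?}}$... the bookkeeping I would do carefully is exactly the one already carried out in the three bulleted cases of the proof of \refL{Lexpectation} (interior-vertex-in-a-cycle, two-interior-vertices-in-a-cycle, cycles-collapsing), now applied to the union of $R$ lists rather than one; each such event is $o\bigpar{(g/n^2)^K}$ per choice of the offending vertices, and the number of choices of offending vertices is $O(\Lmaxx^2)$ or $O(\Lmaxx^4)$, which is absorbed since $\Lmaxx^2=n/(12g)=o(n^2/g)$. I would finish by noting that since all factorial moments of the vector $\bigpar{\tcckm[{m_i}]_{k_i}(T_n,\bgs)}_i$ converge to those of independent Poisson variables with the stated means, this also transfers to the $\cckm$ (via \eqref{lc2b} and a standard argument bounding $(\tcckm)_{r}-(\cckm)_{r}$), which is what the subsequent sections will use.
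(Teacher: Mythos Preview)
Your overall strategy matches the paper's: expand the factorial moment as a sum over tuples of inequivalent path-lists, isolate the ``generic'' tuples (all $2K$ endpoints distinct, the $K$ required $\sim$-relations realized in $K$ distinct cycles of $\bgs$), and show everything else is $o(1)$. But there is a real gap in your non-generic analysis. You write that ``any coincidence among the vertex sets of the $\bP^{(i,s)}$ forces their union to be a path tree \dots\ with $w<2K$ distinct endpoints'', and then invoke the bound $O(n\Lmax^{2w-2}\log g)$ from \eqref{good_unionpaths}. This is false: if two of the lists share an endpoint while the remaining lists stay disjoint from them and from each other, the union consists of \emph{several} path trees, and the correct count carries one factor of $n$ for each connected clump of endpoint-sharing paths, not a single factor of $n$. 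Your bound systematically undercounts such configurations, so the argument as written does not establish the claimed $o(1)$. (The appeal to the three bulleted cases in the proof of \refL{Lexpectation} does not help: those cases concern a \emph{single} list and treat interior-vertex coincidences and cycle collapses, not endpoint coincidences across several lists. Incidentally, interior-vertex coincidences between different lists are irrelevant here, since $\tcckm$ only imposes \ref{C2}.)

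The paper repairs this with a clean device. For each tuple $(\bP(i,j))_{ij}$ it builds an auxiliary graph $H$ on the set of all endpoints, with a \emph{green} edge joining $\start(p)$ to $\eend(p)$ for each path $p$ and a \emph{red} edge joining $\eend(p_\nu)$ to $\start(p_{\nu+1})$ for each $\sim$-constraint. The green components are exactly the clumps of endpoint-sharing paths; each is a path tree, so \eqref{good_unionpaths} gives
\[
M_H = O\bigpar{(\log g)^{\gq_G(H)}\Lmax^{2v(H)-2\gq_G(H)}n^{\gq_G(H)}},
\]
where $\gq_G(H)$ is the number of green components. On the probability side, \refL{LXD} gives $\tpi = O\bigpar{g^{\gq_R(H)}/n^{v(H)}}$, with $\gq_R(H)$ the number of red components. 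The product is $O\bigpar{(\log g)^{\gq_G(H)}g^{\gq_G(H)+\gq_R(H)-v(H)}}$. Since every green and every red component has at least two vertices, one has $\gq_G(H)+\gq_R(H)\le v(H)$, with equality only when all monochromatic components are isolated edges; a short argument then shows equality forces the lists to have pairwise disjoint endpoint sets --- the generic case. This single combinatorial inequality is the missing ingredient in your sketch; it replaces the ad hoc case analysis and is what actually makes every non-generic $H$ contribute $o(1)$.
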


The proof is somewhat lengthy, but the main idea is to show that, given a fixed
number of distinct cycles in $(T_n,\sig)$ of lengths $O(\Lmax)$, they are
pairwise disjoint whp.  

\begin{proof}

We argue similarly as in the special case $q=1$ and $r_1=1$  in
\refL{Lexpectation},
and write this expectation as
\begin{align}\label{ab}
  \E
{\prod_{i=1}^{q}\left(\tcckmi(T_n,\bgs)\right)_{r_i}}
=\hsumx_{(\bP(i,j))_{ij}} \tpi((\bP(i,j))_{ij}),
\end{align}
where we sum over all sequences of distinct lists 
$(\bP(i,j))_{1\le i\le  q,\ 1\le j\le r_i}$ 
such that 
$\bP(i,j)\in \PPkmi(T_n)\cap \hPoo(T_n)$, 
and $\tpi((\bP(i,j))_{ij})$ is the probability that every
$\bP(i,j)\in\tCCkmi(T_n,\bgs)$. 
Recalling the definition of $\hPoo$, 
we can rewrite \eqref{ab} as
\begin{align}\label{ac}
  \E \prod_{i=1}^q \bigpar{\tcckmi(T_n,\bgs)}_{r_i} 
= \sumx_{(\bP(i,j))_{ij}}\frac{ \tpi((\bP(i,j))_{ij})}{\prod_{i,j}{(2k_i)}},
\end{align}
where we now sum over all sequences of  lists 
$(\bP(i,j))_{ij}$ such that 
$\bP(i,j)\in \PPkmi(T_n)$ and no two $\bP(i,j)$ are equivalent (for
$\simeqx$). 


For each such sequence $(\bP(i,j))_{ij}$, define a graph $H$ with vertex
set $V(H):=\bigcup_{i,j}\Ext(\bP(i,j))$, the set of endpoints of all
participating paths, and edges of two colours as follows:
For each list $\bP(i,j)=(p_\mnu)_1^k$, add for each $\mnu$ a \emph{green} edge
between $\start(p_\mnu)$ and $\eend(p_\mnu)$, 
and a \emph{red} edge between $\eend(p_\mnu)$ and $\start(p_{\mnu+1})$ (see Figure~\ref{fig_path_graph} for an example). 
(We use here and below the convention $p_{k+1}:=p_1$.) 
Hence, by the definitions above,
every $\bP(i,j)\in\tCCkmi(T_n,\bgs)$
if and only if 
each red edge in $H$ joins two vertices in the same cycle of $\bgs$.
Thus, $\tpi((\bP(i,j))_{ij})$ in \eqref{ac} is the probability of this event.

\begin{figure}
\center
\includegraphics[scale=0.8]{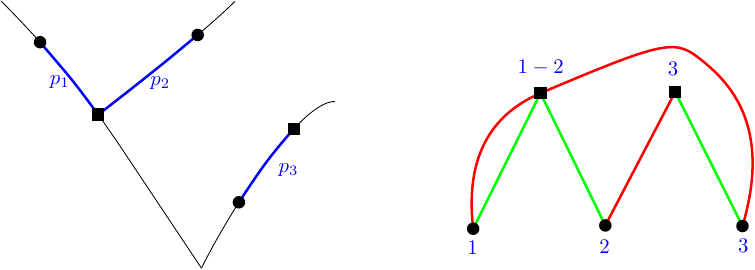}
\caption{Three paths in a tree (left) and their associated graph $H$
  (right). The paths are in blue, the start of a path is represented as a
  square, and its end as a dot.}\label{fig_path_graph} 
\end{figure}

For each  graph $H$ constructed in this way, let $H_G$ 
be the subgraph consisting of all green 
edges, and say that a connected component of $H_G$ is a \emph{green
  component} of $H$. 
Define a \emph{red component} in the same way, and let $\gq_G(H)$ and $\gq_R(H)$ be
the numbers of green and red components, respectively.

Let $M_H=M_H(n)$ be the number of terms in \eqref{ac} with a given graph $H$.
(For some fixed $q$, $m_1,\dots,m_q$, $k_1,\dots,k_q$, and $r_1,\dots,r_q$.)
We estimate $M_H$ as follows.
Each green component of $H$ with $v$ vertices corresponds to some 
set of paths $p_{i,j,\mnu}$ such that
their union is a connected subtree $\bt$ of $T_n$. All these paths have
lengths  $O(\Lmax)$.
Furthermore, we can arrange these paths in some order such that for each
path after the first, the corresponding green edge in $H$ has an endpoint
in common with a previous path, and thus $\bt$ is 
a path tree in $\PU_{u,v}(\Lmax)$ for some $u\le\sum_i r_ik_i$.
Hence, the assumption \eqref{good_unionpaths} implies that there 
are $O\bigpar{n\Lmaxx^{2v-2}\log g}$ possible 
choices for the paths $p_{i,j,\nu}$ corresponding to this green component.
Consequently, taking the product over all green components of $H$,
we have
\begin{align}\label{mh}
  M_H&= O\bigpar{(\log g)^{\gq_G(H)}\Lmaxx^{2v(H)-2\gq_G(H)}n^{\gq_G(H)}}
\\\notag&
=O\bigpar{(\log g)^{\gq_G(H)}g^{-v(H)+\gq_G(H)}n^{v(H)}}
.\end{align}

Moreover, we have seen that $\tpi((\bP(i,j))_{ij})$ in \eqref{ac} is the
probability that each red component lies in a single cycle of $\sig$.
This entails that the $v(H)$ vertices in $H$
lie in at most $\gq_R(H)$ different cycles of $\bgs$,
with at least two of the vertices in each cycle,
and thus \refL{LXD} shows that
\begin{align}\label{ad}
  \tpi((\bP(i,j))_{ij}) = O\bigpar{g^{\gq_R(H)}n^{-v(H)}}.
\end{align}
Consequently, the total contribution to \eqref{ac} for all sequences of
lists yielding a given $H$ is, by \eqref{mh} and \eqref{ad},
\begin{align}
  \label{ae}
&O\bigpar{(\log g)^{\gq_G(H)}g^{\gq_G(H)+\gq_R(H)-v(H)}} 
.\end{align}
Since each green or red component has size at least 2, it follows that
$v(H)\ge 2\gq_G(H)$ and $v(H)\ge 2\gq_R(H)$, and thus $\gq_G(H)+\gq_R(H) \le v(H)$.
If we here have strict inequality, then, since 
$g\to\infty$, \eqref{ae} shows that the
contribution is $o(1)$ and may be ignored. (There is only a finite number of
possible $H$ to consider.)

Hence, it suffices to consider the case $\gq_G(H)=\gq_R(H)=v(H)/2$. 
This implies that all green or red components have size 2, and thus are
isolated edges. It follows that if two 
different lists $\bP(i_1,j_1)$ and $\bP(i_2,j_2)$ contain two paths
$p_{i_1,j_1,\mnu_1}$ and $p_{i_2,j_2,\mnu_2}$ 
that have a common endpoint, then these paths have to coincide
(up to orientation).
Furthermore, if they coincide, and have, say, the same orientation so
$\eend(p_{i_1,j_1,\mnu_1})=\eend(p_{i_2,j_2,\mnu_2})$, then the red edges
from that vertex have to coincide, so 
$\start(p_{i_1,j_1,\mnu_1+1})=\start(p_{i_2,j_2,\mnu_2+1})$.
It follows easily 
that the two lists $\bP(i_1,j_1)$ and $\bP(i_2,j_2)$ are equivalent
in the sense $\bP(i_1,j_1)\equiv\bP(i_2,j_2)$ defined above.
However, we have excluded this possibility, and this contradiction shows
that  all paths $p_{i,j,\mnu}$ in the lists have disjoint sets of endpoints 
$\Ext(p_{i,j,\mnu})$. 

Let  $\sw:=\sum_i r_i k_i$ be the total number of paths in the lists
in $(\bP(i,j))_{ij}$. 
We have proved that in~\eqref{ac}, the contribution of the terms where
the paths in $(\bP(i,j))_{ij}$ 
do not have $2\sw$
distinct endpoints is $o(1)$. 
Hence we may now consider the case where these endpoints are distinct. 
The calculation is very similar to the one
performed in \cite{SJ358}, therefore we will omit some details.
First, the total number of sequences of lists 
$(\bP(i,j))_{ij}$ such that 
$\bP(i,j)\in \PPkmi(T_n)$ and no two $\bP(i,j)$ are equivalent 
is
\begin{align}\label{mg}
  \prod_{i=1}^q\prod_{j=1}^{r_i}\bigpar{\ppkmi(T_n)+O(1)}
\sim
  \prod_{i=1}^q{\ppkmi(T_n)}^{r_i},
\end{align}
which by \eqref{good_paths} is
\begin{align}\label{mig}
\prod_{i=1}^q  \Theta\Bigpar{\Bigpar{\frac{n^2}{g}}^{r_ik_i}}
=\Theta\Bigpar{\Bigpar{\frac{n^2}{g}}^{w}}.
\end{align}
If the endpoints of the lists are not distinct, then the construction above
yields a graph $H$ with $v(H)\le2\sw-1$.
For each such graph $H$, the number of such sequences of lists is by
\eqref{mh}, recalling $\gq_G(H)\le v(H)/2$,
\begin{align}\label{mh2}
  M_H&
=
O\bigpar{(\log g)^{v(H)/2}g^{-v(H)/2}n^{v(H)}}
=
O\Bigpar{(\log g)^{v(H)/2}\Bigpar{\frac{n}{g\qq}}^{2\sw-1}}
\notag\\&
=o\Bigpar{\Bigpar{\frac{n}{g\qq}}^{2\sw}}
=o\Bigpar{\Bigpar{\frac{n^2}{g}}^\sw}
.\end{align}
Hence, comparing with \eqref{mig}, 
we see that the number of  sequences of lists where the endpoints are not
distinct is a fraction $o(1)$ of the total number.
In other words, the number of sequences of lists that have $2w$ distinct
endpoints is $1-o(1)$ times the total number in \eqref{mg}.
For each such sequence of lists $(\bP(i,j))_{ij}$, we have 
\begin{align}\label{mgg}
\tpi((\bP(i,j)_{ij})\sim\Bigpar{\frac{6g}{n^2}}^w  
=\prod_{i=1}^q \Bigpar{\frac{6g}{n^2}}^{r_ik_i}  
\end{align}
by \refLs{LXC} and \ref{LXD} (for the case that some cycle in $\bgs$ covers
more than one pair of endpoints).
Consequently, \eqref{ac}, \eqref{mg} and \eqref{mgg} yield
\begin{align}\label{acc}
  \E \prod_{i=1}^q \bigpar{\tcckmi(T_n,\bgs)}_{r_i} 
= \bigpar{1+o(1)}   \prod_{i=1}^q\biggpar{\frac{\ppkmi(T_n)}{2k_i}}^{r_i}
 \Bigpar{\frac{6g}{n^2}}^{r_ik_i}  +o(1),
\end{align}
and \eqref{lc3} follows, recalling \eqref{pia} and \eqref{lc2a}--\eqref{lc2b},
\end{proof}

\begin{lemma}\label{LPoi}
Let $(T_n)$ be a good sequence of trees.
Then, for every $m\ge0$ and $k\ge1$,
\begin{align}
  \cckm(T_n,\bgs)\dto \Poi\bigpar{\gL_k(m)}
\end{align}
as \ntoo. Moreover, this holds jointly for any (finite) number of pairs $(m,k)$,
with the limit Poisson variables  being  independent.
\end{lemma}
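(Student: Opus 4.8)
The plan is to deduce the Poisson convergence in \refL{LPoi} from the factorial moment asymptotics already established in \refL{Lmom}, using the classical method of moments for Poisson approximation. Recall that a sequence of $\bbZgeo$-valued random variables $(X_n)$ converges in distribution to $\Poi(\lambda)$ if and only if $\E\bigsqpar{(X_n)_r}\to\lambda^r$ for every fixed $r\ge1$, and that the analogous statement holds jointly: if $(X_n^{(1)},\dots,X_n^{(p)})$ satisfies $\E\bigsqpar{\prod_{i=1}^p (X_n^{(i)})_{r_i}}\to\prod_{i=1}^p\lambda_i^{r_i}$ for all fixed $(r_1,\dots,r_p)$, then the vector converges in distribution to a vector of independent Poisson variables with parameters $\lambda_i$. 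This is a standard fact (it follows, e.g., from the fact that mixed factorial moments of independent Poisson variables are exactly $\prod\lambda_i^{r_i}$, together with a moment-determinacy argument for the Poisson distribution, whose moment generating function is entire); see for instance the treatment in \cite{SJ358} or standard references on Poisson approximation.

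First I would record the single-variable case: fix $m\ge0$, $k\ge1$, and apply \refL{Lmom} with $q=1$, $(m_1,k_1)=(m,k)$, and an arbitrary fixed $r_1=r\ge1$. This gives $\E\bigsqpar{(\tcckm(T_n,\bgs))_r}\to\gL_k(m)^r$ for every $r$, so by the method of moments $\tcckm(T_n,\bgs)\dto\Poi(\gL_k(m))$. Then I would observe that $\cckm(T_n,\bgs)\le\tcckm(T_n,\bgs)$ deterministically, while by \refL{Lexpectation}, equation \eqref{lc2b}, the nonnegative integer-valued difference $\tcckm(T_n,\bgs)-\cckm(T_n,\bgs)$ has expectation tending to $0$, hence tends to $0$ in probability (Markov). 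Therefore $\cckm(T_n,\bgs)$ and $\tcckm(T_n,\bgs)$ have the same distributional limit, namely $\Poi(\gL_k(m))$.

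For the joint statement, I would take finitely many distinct pairs $(m_1,k_1),\dots,(m_q,k_q)$ and arbitrary fixed exponents $r_1,\dots,r_q$, and apply \refL{Lmom} directly to get $\E\bigsqpar{\prod_{i=1}^q (\tcckmi(T_n,\bgs))_{r_i}}\to\prod_{i=1}^q\gL_{k_i}(m_i)^{r_i}$. By the joint method of moments for Poisson variables, the vector $(\tcckmi(T_n,\bgs))_{i=1}^q$ converges in distribution to a vector of independent $\Poi(\gL_{k_i}(m_i))$ variables. Finally, applying \eqref{lc2b} coordinatewise, each difference $\tcckmi(T_n,\bgs)-\cckmi(T_n,\bgs)\pto 0$, so the vector $(\cckmi(T_n,\bgs))_i$ has the same joint limit; this gives independence of the limiting Poisson variables and completes the proof.

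The only real subtlety — and it is minor here — is justifying that the method of moments applies, i.e.\ that the factorial-moment convergence in \refL{Lmom} genuinely forces distributional convergence. This is where one invokes that the Poisson law is determined by its moments (equivalently, its factorial moments), which is immediate since its moment generating function $\exp(\lambda(e^t-1))$ is entire; and for the joint version one uses that a product of independent Poissons is likewise moment-determined. I do not expect any obstacle beyond citing this standard fact and the elementary $L^1\Rightarrow$ in-probability passage from \eqref{lc2b}; there is no new estimate to make in this lemma, all the work having been done in \refLs{Lexpectation} and \ref{Lmom}.
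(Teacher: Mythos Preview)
Your proposal is correct and follows essentially the same approach as the paper's proof: apply the method of moments to \refL{Lmom} to get joint Poisson convergence for $\tcckmi(T_n,\bgs)$, then use \eqref{lc2b} (via Markov, or equivalently noting that the integer-valued nonnegative difference is whp zero) to transfer the conclusion to $\cckmi(T_n,\bgs)$. Your write-up is more explicit about the moment-determinacy justification, but there is no substantive difference.
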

\begin{proof}
  \refL{Lmom} implies by the method of moments that
  $\tcckmi(T_n,\bgs)\dto\Poi\bigpar{\gL_{k_i}(m_i)}$ jointly, 
with independent limits,
for any set of pairs $(m_i,k_i)$.
Furthermore, \eqref{lc2b} implies that \whp{}
$\cckmi(T_n,\bgs)=\tcckmi(T_n,\bgs)$ 
for each $(m_i,k_i)$, and thus $\cckmi(T_n,\bgs)$ converge to the same limits.
\end{proof}

\subsection{Letting $M\to\infty$}\label{SSMoo}
We have so far kept $M$ fixed.
Now it is time to let $M\to\infty$.
We therefore add $M$ to the notations when necessary.

Recall $\gL_k(m)=\gL_k(m;M)$ defined in \eqref{lc2a}. We define also, for
integers $a$ and $b$ with $0\le a\le b<\infty$,
\begin{align}\label{gLLk}
  \gL_k[a,b; M]:=\sum_{m=a}^{b-1}\gL_k(m;M).
\end{align}
We begin by finding the asymptotics of this as $M\to\infty$.

\begin{lemma}\label{LgL}
Let $a(M)$ and $b(M)$ be integers depending on $M$ such that $a(M)\le b(M)$ 
and, as \Mtoo,
\begin{equation}\label{sax}
\frac{a(M)}{\Mmax}\to x
\end{equation}
and
\begin{equation}\label{sby}
\frac{b(M)}{\Mmax}\to y.
\end{equation}
Then, as \Mtoo,
for every fixed $k$,
\begin{equation}\label{llambda}
\Lambda_k[a(\Mmax),b(\Mmax);M]\to \lambda_k^{x,y}
:=\frac{y^{2k}-x^{2k}}{(2k)(2k)!}
=\int_x^y\frac{t^{2k-1}}{(2k)!}\dd t
.\end{equation}
\end{lemma}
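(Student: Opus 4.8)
The plan is to evaluate $\gL_k[a(M),b(M);M]$ almost exactly, by encoding the inner sum over $\bm$ with a generating function and then summing the resulting coefficients; the limiting constant falls out because the prefactor $(2M^2)^{-k}$ is exactly tuned to cancel the $2^kM^{2k}$ that comes from the summation.

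First I would use \eqref{eq_Pm}, which gives $\Pm=\prod_{i=1}^k(2m_i+1)$ for $\bm=(m_1,\dots,m_k)$, to write
\[
\sum_{|\bm|=m,\,s(\bm)=k}\Pm
=[z^m]\Bigpar{\sum_{j\ge0}(2j+1)z^j}^k
=[z^m]\Bigpar{\frac{1+z}{(1-z)^2}}^k
=[z^m](1+z)^k(1-z)^{-2k}.
\]
Plugging this into \eqref{lc2a} and \eqref{gLLk}, and using $\sum_{m=0}^{N-1}[z^m]f(z)=[z^{N-1}]\bigpar{f(z)/(1-z)}$, one gets
\[
\gL_k[a,b;M]=\frac{1}{2k}\Bigpar{\frac{1}{2M^2}}^k\bigpar{S(b)-S(a)},
\qquad
S(N):=[z^{N-1}](1+z)^k(1-z)^{-2k-1},
\]
with $S(0):=0$. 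Expanding $(1+z)^k=\sum_{j=0}^k\binom kj z^j$ and using $[z^r](1-z)^{-2k-1}=\binom{r+2k}{2k}$ shows that, for $N\ge1$, $S(N)=\sum_{j=0}^k\binom kj\binom{N-1-j+2k}{2k}$ is a polynomial in $N$ of degree $2k$ with leading coefficient $\frac{1}{(2k)!}\sum_{j=0}^k\binom kj=\frac{2^k}{(2k)!}$; hence $S(N)=\frac{2^k}{(2k)!}N^{2k}+O(N^{2k-1})$, uniformly in $N\ge0$ with the implied constant depending only on $k$.

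It then remains to insert the hypotheses $a(M)/M\to x$ and $b(M)/M\to y$. For $M$ large we have $0\le a\le b\le(y+1)M$, so the error terms obey $O(a^{2k-1})+O(b^{2k-1})=O(M^{2k-1})=o(M^{2k})$, while $b^{2k}-a^{2k}=\bigpar{(b/M)^{2k}-(a/M)^{2k}}M^{2k}=\bigpar{y^{2k}-x^{2k}+o(1)}M^{2k}$. Consequently $S(b)-S(a)=\frac{2^k}{(2k)!}(y^{2k}-x^{2k})M^{2k}+o(M^{2k})$, and since $(2M^2)^{-k}\cdot 2^kM^{2k}=1$,
\[
\gL_k[a(M),b(M);M]\xrightarrow[M\to\infty]{}\frac{1}{2k}\cdot\frac{y^{2k}-x^{2k}}{(2k)!}=\frac{y^{2k}-x^{2k}}{(2k)(2k)!}=\gl_k^{x,y},
\]
and the integral form is simply the evaluation $\int_x^y t^{2k-1}\dd t=(y^{2k}-x^{2k})/(2k)$.

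The argument is mostly bookkeeping; the only point requiring a little attention is the uniformity of the $O(N^{2k-1})$ error in $S(N)$ all the way down to small $N$ — relevant when $x=0$, since then $a(M)$ may remain bounded — but this is immediate because $S(N)$ genuinely is a polynomial in $N$ for $N\ge1$ (so the ``error'' is just the lower-order part of that polynomial). As a sanity check and alternative route, one may instead read $\gL_k[a,b;M]$ directly as a Riemann sum of mesh $1/M$ for $\frac{1}{2k}\int_{\{t_i\ge0,\ x\le\sum_i t_i<y\}}\prod_{i=1}^k t_i\,\dd t$, whose value $\frac{1}{2k}\cdot\frac{y^{2k}-x^{2k}}{(2k)!}$ follows from the Dirichlet integral $\int_{\{t_i\ge0,\ \sum_i t_i\le s\}}\prod_i t_i\,\dd t=s^{2k}/(2k)!$.
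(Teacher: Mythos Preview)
Your proof is correct and follows essentially the same strategy as the paper's: both compute $\sum_{|\bm|=m,\,s(\bm)=k}\Pm$ via generating functions, extract the leading term $\frac{2^k}{(2k-1)!}m^{2k-1}$ (equivalently, your cumulative $S(N)\sim\frac{2^k}{(2k)!}N^{2k}$), sum over $m$, and cancel against $(2M^2)^{-k}$. The only tactical difference is that the paper sandwiches $\Pm$ between $\prod 2m_i$ and $\prod(2m_i+2)$ to get simpler generating functions, whereas you work directly with the exact $\bigl(\frac{1+z}{(1-z)^2}\bigr)^k$; your route is slightly cleaner and avoids the two-sided estimate.
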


\begin{proof}

We first note that
on the one hand, we have, if $\ell\ge k$,
\begin{align}\label{aw1}
\sum_{|\bm|= \ell\atop s(\m)=k} \Pm&\geq \sum_{|\bm|= \ell\atop s(\m)=k}\prod_{i=1}^k 2m_i
\notag\\
&=2^k[z^\ell]\left(\frac{z}{(1-z)^2}\right)^k\notag\\
&=2^k\binom{\ell+k-1}{\ell-k}\notag\\
&\geq 2^k \frac{(\ell-k)^{2k-1}}{(2k-1)!}
\end{align}
and, similarly,
\begin{align}\label{aw2}
\sum_{|\bm|= \ell\atop s(\m)=k} \Pm&
\leq \sum_{|\bm|= \ell\atop s(\m)=k}\prod_{i=1}^k (2m_i+2)
\notag\\
&\leq2^k\sum_{|\bm|= \ell+k\atop s(\m)=k}\prod_{i=1}^k m_i\notag\\
&=2^k\binom{\ell+2k-1}{\ell}\notag\\
&\leq 2^k \frac{(\ell+2k)^{2k-1}}{(2k-1)!}.
\end{align}
Combining \eqref{aw1} and \eqref{aw2}, we obtain
\begin{align}\label{aw3}
  \sum_{|\bm|= \ell\atop s(\m)=k}\Pm
=  \frac{2^{k}}{(2k-1)!} \ell^{2k-1} + O\bigpar{1+\ell^{2k-2}},
\end{align}
uniformly in $\ell\ge0$.
Hence, since $b(M)=O(M)$ by \eqref{sby},
\begin{align}\label{aw4}
  \sum_{a(M)\le |\bm|< b(M)\atop s(\m)=k}\Pm
&= \sum_{\ell=a(M)}^{b(M)} \frac{2^{k}}{(2k-1)!} \ell^{2k-1} 
+ O\bigpar{M^{2k-1}}
\notag\\&
=  \frac{2^{k}}{(2k-1)!} \frac{b(M)^{2k} -a(M)^{2k} }{2k}
+ O\bigpar{M^{2k-1}}
\end{align}
Consequently, \eqref{lc2a} and the assumptions \eqref{sax} and \eqref{sby} yield
\begin{align}\label{aw5}
&\Lambda_k[a(\Mmax),b(\Mmax);M]
=
\sum_{m=a(M)}^{b(M)-1}   \gL_k(m;M)
\notag\\&\qquad
=\frac{1}{2k}\left(\frac{1}{2\Mmax^2}\right)^{k}
\Bigpar{ \frac{2^{k}}{(2k-1)!} \frac{b(M)^{2k} -a(M)^{2k} }{2k}
+O\bigpar{M^{2k-1}}}
\notag\\&\qquad
\to\frac{y^{2k}-x^{2k}}{(2k)(2k)!}.
\end{align}
which completes the proof.
\end{proof}

As we have seen above, a cycle $\sC$ in 
the underlying graph of $(T_n,\bgs)$ 
is
given by a list $\bP$ of paths satisfying \ref{C1}--\ref{C3}.
Define $s(\sC):=s(\bP)$, the number  of paths in the list.

\begin{lemma}\label{LPPk}
Let $(T_n)$ be a good sequence of trees, and
let $\fC_n$
be the set of simple cycles in 
the underlying graph of
$(T_n,\bgs)$.
Further, for $k\ge1$, let $\fC\kkk_n:=\set{\sC\in\fC_n:s(\sC)=k}$,
and consider the
(multi)set of their lengths 
$\Xi\kkk_n:=\bigset{|\sC|/\Lmax:\sC\in\fC\kkk_n}$, with $\Lmax$ given by
\eqref{ellen}. 
Then the random set\/ $\Xi\kkk_n$,
regarded as a point process on $\ooo$, converges in distribution
to a Poisson process on $\ooo$ with intensity
$\xfrac{t^{2k-1}}{(2k)!}$. Moreover, this holds jointly for any finite
number of $k$.
\end{lemma}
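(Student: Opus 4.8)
The plan is to promote the Poisson limits for the discretized counts $\cckm$ obtained in \refL{LPoi} to a point-process statement, letting $M\to\infty$ and using the asymptotics of \refL{LgL}. I will use the standard criterion for convergence of point processes (see \eg{} \cite[Ch.~16]{Kallenberg}): a sequence of point processes on $\ooo$ converges in distribution to a Poisson process with a given diffuse intensity provided that, for every finite family of pairwise disjoint half-open intervals $[x_j,y_j)$ and every finite family of distinct integers $k_j\ge1$, the associated counts converge jointly in distribution to independent Poisson variables with the expected parameters. By a routine approximation, since the limiting intensities here are diffuse, it suffices to treat intervals with positive pairwise distance. Fix then $k\ge1$ and $0\le x<y$, and let $\mathcal C_n^{x,y,(k)}$ be the number of cycles $\sC\in\fC\kkk_n$ with $x\Lmax\le|\sC|\le y\Lmax$. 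If $\sC$ corresponds to a list $\bP\in\CCkm(T_n,\bgs)$, then $|\sC|=\ell(\bP)$, and since $s(\bm)=k$ the bounds \eqref{lp} give $\tfrac m{\Mmax}\Lmax\le|\sC|\le\tfrac{m+k}{\Mmax}\Lmax$. Hence a cycle whose $m$-value satisfies $\Mmax x\le m\le\Mmax y-k$ is certainly counted in $\mathcal C_n^{x,y,(k)}$, while conversely any cycle counted there has $\Mmax x-k\le m\le\Mmax y$. Recalling \eqref{CCkab}, this yields, for every fixed $M$, the sandwich $C^-_{n,M}\le\mathcal C_n^{x,y,(k)}\le C^+_{n,M}$ where
\begin{align*}
C^-_{n,M}&:=\sum_{\lceil\Mmax x\rceil\le m\le\lfloor\Mmax y-k\rfloor}\cckm(T_n,\bgs),\\
C^+_{n,M}&:=\sum_{\lceil\Mmax x-k\rceil\le m\le\lfloor\Mmax y\rfloor}\cckm(T_n,\bgs).
\end{align*}
Each of $C^{\pm}_{n,M}$ equals $\cckab(T_n,\bgs)$ for integers $a=a^{\pm}(M)$, $b=b^{\pm}(M)$, and since $k$ is fixed, $a^{\pm}(M)/\Mmax\to x$ and $b^{\pm}(M)/\Mmax\to y$ as \Mtoo.

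Next, I pass to the limit. By \refL{LPoi}, for each fixed $M$ we have $C^{\pm}_{n,M}\dto\Poi\bigpar{\gL_k[a^{\pm}(M),b^{\pm}(M);M]}$ as \ntoo, and by \refL{LgL} both parameters converge, as \Mtoo, to the constant $\lambda_k^{x,y}$ of \eqref{llambda}. A sandwich now settles the one-interval case: for each $j\ge0$, monotonicity of $t\mapsto\P(N\le t)$ gives
\begin{align*}
\P\bigpar{\Poi(\gL_k[a^{+}(M),b^{+}(M);M])\le j}&\le\liminf_n\P\bigpar{\mathcal C_n^{x,y,(k)}\le j}\\
&\le\limsup_n\P\bigpar{\mathcal C_n^{x,y,(k)}\le j}\le\P\bigpar{\Poi(\gL_k[a^{-}(M),b^{-}(M);M])\le j},
\end{align*}
and letting \Mtoo, using continuity of the Poisson law in its parameter, both outer terms tend to $\P(\Poi(\lambda_k^{x,y})\le j)$; hence $\mathcal C_n^{x,y,(k)}\dto\Poi(\lambda_k^{x,y})$. (The same sandwich with \refL{Lexpectation} gives in addition $\E\mathcal C_n^{x,y,(k)}\to\lambda_k^{x,y}$.)

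For the joint statement, take finitely many intervals $[x_j,y_j)$ with positive pairwise distance and finitely many values $k_j$. Once $M$ is large, the integer ranges $[a^{\pm}_j(M),b^{\pm}_j(M))$ attached to the pairs $(j,k_j)$ are pairwise disjoint, so \refL{LPoi} makes the lower (resp.\ upper) bounds $C^{-}_{n,M,j}$ (resp.\ $C^{+}_{n,M,j}$) asymptotically independent Poisson variables; applying the sandwich above componentwise to the multivariate distribution functions $\vec c\mapsto\P\bigpar{\bigcap_j\{\mathcal C_n^{x_j,y_j,(k_j)}\le c_j\}}$ then gives joint convergence of $\bigpar{\mathcal C_n^{x_j,y_j,(k_j)}}_j$ to independent $\Poi(\lambda_{k_j}^{x_j,y_j})$. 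By the criterion recalled at the start, this is exactly the asserted convergence of $\Xi\kkk_n$ to a Poisson process with intensity $t^{2k-1}/(2k)!$, jointly over finitely many $k$.

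The main obstacle is precisely the interchange of the limits \ntoo{} and \Mtoo: the data $\cckm$ pin down the rescaled length of a cycle only to within $k/\Mmax$ (this is the content of \eqref{lp}), so $\mathcal C_n^{x,y,(k)}$ cannot be identified with a single discretized count and must instead be trapped between two counts that coincide only in the \Mtoo{} limit; the argument closes because \refL{LgL} guarantees that the two trapping intensities share that limit. Everything else is the routine bookkeeping of ceilings, floors, and the (finitely many) possible graphs $H$, already handled in the earlier lemmas.
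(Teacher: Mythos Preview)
Your proof is correct and follows essentially the same route as the paper: the identical sandwich $C^-_{n,M}\le\mathcal C_n^{x,y,(k)}\le C^+_{n,M}$ built from \eqref{lp}, followed by \refL{LPoi} for fixed $M$ and \refL{LgL} as $M\to\infty$. The only real difference is how you justify the interchange of the two limits. The paper shows directly that $C^+_{kMn}-C^-_{kMn}\dto\Poi(\hgl(M))$ with $\hgl(M)\to0$, so $\P(\mathcal C^{x,y}_{kn}\neq C^-_{kMn})\to0$ in the double limit, and then quotes \cite[Theorem~4.2]{Billingsley}; this transfers \emph{all} distributional statements from $C^-$ to $\mathcal C$ at once and works verbatim for adjacent intervals. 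Your CDF squeeze achieves the same thing more explicitly, but at the cost of the extra reduction to intervals with positive pairwise distance (so that the enlarged integer ranges stay disjoint and \refL{LPoi} gives independence). That reduction is legitimate since the limiting intensity is diffuse, but note that the paper's device of controlling $C^+-C^-$ avoids it entirely: once you know $\P(\mathcal C_n\neq C^-_{n,M})\to0$ for each interval, the joint statement for arbitrary disjoint intervals follows immediately from the joint Poisson limit of the $C^-$'s, with no separation hypothesis needed.
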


\begin{proof}
Let $\cCxy_{kn}$ be the number of elements of $\Xi\kkk_n\cap[x,y)$, i.e.,
the number of simple cycles $\sC$ in the underlying graph of
$(T_n,\bgs)$ such that $s(\sC)=k$
and $x\Lmax\le |\sC|<y\Lmax$.
  The conclusion is equivalent to, 
with $\gl_k^{x,y}$ defined in \eqref{llambda},
  \begin{align}\label{lppk}
    \cCxy_{kn}\dto \Poi\bigpar{\gl_k^{x,y}},
\qquad\text{as \ntoo}
 , \end{align}
for every interval $[x,y)$ with $0\le x<y<\infty$,
with joint convergence (to independent limits) for any finite set of
disjoint such intervals; moreover, this is to hold jointly for several $k$.
We show that this follows from the similar statement in \refL{LPoi} by
letting $\Mmax\to\infty$. For notational convenience, we consider only a
single $k$ and a single interval $[x,y)$; the general case follows in the
same way.

Let 
\begin{align}
a^+(\Mmax)&=\bigpar{\floor{x\Mmax}-k}\vee 0,\\
a^-(\Mmax)&=\lceil x\Mmax\rceil,\\
b^-(\Mmax)&=\bigpar{\lfloor{y\Mmax}\rfloor-k}\vee0,\\
b^+(\Mmax)&=\lceil {y\Mmax}\rceil.
\end{align}
Consider only $M$ that are so large that $b^-(M)>a^-(M)$.
Then, it follows from \eqref{lp} that
\begin{align}\label{lp1}
C_{kMn}^-:=  \cckMabx-\le \cCxy_{kn}\le C_{kMn}^+:=\cckMabx+.
\end{align}
By \refL{LPoi}, 
for every fixed $M$, as \ntoo,
\begin{align}\label{lp2}
  C_{kMn}^- =\sum_{m=a^-(M)}^{b^{-}(M)-1}\cckm
\dto Z_{kM}:=\Poi\bigpar{\gL_k[a^-(M),b^-(M);M]}.
\end{align}
Moreover, by \refL{LgL}, as \Mtoo
\begin{align}\label{lp3}
  Z_{kM}\dto \Poi\bigpar{\gl_k(x,y)}.
\end{align}
Similarly, for every fixed $M$, as \ntoo, 
\begin{align}\label{lp4}
  C_{kMn}^+-C_{kMn}^- &=
\cckMx{a^+(M),a^-(M)}
+ \cckMx{b^-(M),b^+(M)}
\notag\\&
\dto W_M := \Poi\bigpar{\hgl(M)},
\end{align}
where
\begin{align}\label{lp5}
  \hgl(M):=\gL_k[a^+(M),a^-(M);M]+\gL_k[b^-(M),b^+(M);M],
\end{align}
and thus, by \refL{LgL} again, 
\begin{align}\label{lp6}
  \hgl(M)\to0\qquad\text{as }\Mtoo
.\end{align}
Consequently, using \eqref{lp1}, \eqref{lp4}
\begin{align}\label{lp7}
\limsup_{\ntoo}  \P\bigsqpar{\cC^{x,y}_{kn}\neq C^-_{kMn}}
\le  \limsup_{\ntoo} \P\bigsqpar{C^+_{kMn}\neq C^-_{kMn}}
= \P\bigpar{W_M>0},
\end{align}
and thus, by \eqref{lp6}, 
\begin{align}\label{lp8}
\limsup_{\Mtoo}\limsup_{\ntoo}  \P\bigsqpar{\cC^{x,y}_{kn}\neq C^-_{kMn}}
\le  \lim_{\Mtoo} \P\bigpar{W_M>0} 
=0.
\end{align}

Finally, \eqref{lp2}, \eqref{lp3} and \eqref{lp8} imply
\eqref{lppk}
by \cite[Theorem 4.2]{Billingsley}.
\end{proof}

\subsection{Finishing the proof for good trees}\label{SSfinish}

Let $\cCxy_{kn}=\cCxy_{kn}(T_n,\bgs)$ be as in the proof of \refL{LPPk}, i.e.,
the number of simple cycles $\sC$ in the underlying graph of 
$(T_n,\bgs)$ such that $s(\sC)=k$
and $|\sC|/\Lmax\in[x,y)$.

\begin{lemma}\label{Lbig}
Let $(T_n)$ be a good sequence of trees.
Then, for every $\xmax<\infty$,
there exist $K$ and $N$ such that if $n>N$ and $k>K$, then
\begin{align}\label{lbig}
  \E \cCoy_{kn} < 2^{-k}.
\end{align}
\end{lemma}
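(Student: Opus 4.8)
My plan is to fix a large absolute constant $\Mmax$ and set $b:=\ceil{\Mmax\xmax}$, thereby reducing to the discretized quantities of \refSS{Sctrees}. Any simple cycle $\sC$ in the underlying graph of $(T_n,\bgs)$ with $s(\sC)=k$ and $|\sC|<\xmax\Lmax$ is given by a list $\bP=(p_1,\dots,p_k)$ of disjoint paths, which lies in $\PPm(T_n)$ for $\bm=\bigpar{\floor{|p_i|\Mmax/\Lmax}}_i$; then $|\bm|\le\ell(\bP)\Mmax/\Lmax<\xmax\Mmax$, so $|\bm|<b$. Moreover, since $\bP$ satisfies \ref{C3}, the $k$ cycles of $\bgs$ containing the closing pairs $\set{\eend(p_i),\start(p_{i+1})}$ are \emph{distinct} — otherwise two endpoints of paths in $\bP$ would be $\sim$-related. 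Relabelling these $2k$ endpoints suitably, the event that $\bP$ closes up with distinct cycles has probability exactly $\P_{n,g}^{(k)}$ as in \refL{LXC}, for every $\bP$ with $s(\bP)=k$. Hence
\begin{align}\label{propbig1}
\E\cCoy_{kn}\le\sum_{m=0}^{b-1}\E\cckm(T_n,\bgs)\le\frac{\P_{n,g}^{(k)}}{2k}\sum_{|\bm|<b,\ s(\bm)=k}\pM(T_n).
\end{align}

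I would then bound the two factors uniformly in $k$. By \refL{LXC}, $\P_{n,g}^{(k)}\le(\CXC g/n^2)^k$ with $\CXC<2200$ absolute, valid once $g\le n/7$ and $k\le n/2$. For the sum, \eqref{eq_bound_disjoint_paths} gives $\pM(T_n)\le\prod_i P_{m_i}(T_n)$, and goodness (the one-path case of \eqref{good_paths}) gives $P_j(T_n)\le 2(2j+1)n\Lmaxx^2/\Mmax^2$ for $n$ large — uniformly, since $\set{0,\dots,b-1}$ is a fixed finite set once $\Mmax,\xmax$ are fixed. Thus $\pM(T_n)\le(2n\Lmaxx^2/\Mmax^2)^k\Pm$. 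Using $g\Lmaxx^2/n=\tfrac1{12}$, then $\Pm=\prod(2m_i+1)\le 2^k\prod(m_i+1)$ and summing via the hockey-stick identity,
\begin{align}\label{propbig2}
\E\cCoy_{kn}\le\frac1{2k}\Bigpar{\frac{2\CXC g\Lmaxx^2}{\Mmax^2 n}}^k\sum_{|\bm|<b,\ s(\bm)=k}\Pm
\le\frac1{2k}\Bigpar{\frac{\CXC}{3\Mmax^2}}^k\binom{b+2k-1}{2k}.
\end{align}

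To conclude: for $k\ge b$ one has $\binom{b+2k-1}{2k}\le(3k)^{2k}/(2k)!\le(9e^2/4)^k$ by Stirling, so the bound in \eqref{propbig2} is at most $\bigpar{3\CXC e^2/(4\Mmax^2)}^k/(2k)$. Choosing $\Mmax$ large enough that $3\CXC e^2/(2\Mmax^2)\le1$ (an absolute choice, e.g.\ $\Mmax=157$) makes this $<2^{-k}$ for all $k\ge b$. Set $K:=b$ and let $N$ be large enough that $g_n\le n/7$, that $k<\xmax\Lmax$ forces $k\le n/2$, and that the uniform estimate on $P_j(T_n)$ holds. If $n>N$ and $k>K$, then either $k\ge\xmax\Lmax$, in which case $\cCoy_{kn}=0$ (a $k$-piece cycle has length $\ge k$), or $b\le k<\xmax\Lmax$, in which case \eqref{propbig2} and the above give $\E\cCoy_{kn}<2^{-k}$.

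The genuine obstacle is precisely the uniformity in $k$ needed to produce one $N$ valid for all $k>K$ at once: working through \refL{Lexpectation} only yields, per fixed $k$, $\E\cckm(T_n,\bgs)\to\gL_k(m)$ with a rate that may worsen with $k$. The argument is rescued by three facts that I would verify carefully: \refL{LXC} is already uniform in $k$; the only tree-side input needed is $P_j(T_n)$ for $j$ in the \emph{fixed finite} range $[0,b)$, where goodness applies uniformly; and one never has to consider $k\ge\xmax\Lmax$, since there $\cCoy_{kn}\equiv0$. One should also check that under $g\to\infty$, $g=o(n)$ the incidental factors (for instance $(n)_{2k}/n^{2k}$, if one prefers the sharp constant $6$ to $\CXC$) remain $1+o(1)$ over the range $k<\xmax\Lmax$, but this requires no new idea.
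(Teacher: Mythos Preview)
Your proof is correct and follows essentially the same strategy as the paper: bound $\pi(\bP)\le\Pngr$ via \refL{LXC} (using that \ref{C2}--\ref{C3} force the $k$ cycles of $\bgs$ to be distinct), then bound $\pM(T_n)\le\prod_i P_{m_i}(T_n)$ and control the finitely many $P_j(T_n)$ uniformly via goodness. The one noteworthy difference is in the final combinatorics: the paper applies the AM--GM inequality to $\prod(m_i+1)$ and crudely counts the number of $\bm$'s as $(\xmax\Mmax+1)^k$, obtaining a bound $(4\xmax\CXC/\Mmax)^k$ which forces $\Mmax$ to be chosen proportional to $\xmax$; you instead sum $\prod(m_i+1)$ exactly via the hockey-stick identity and then use Stirling, which lets you fix $\Mmax$ as an absolute constant and absorb the dependence on $\xmax$ entirely into $K=b=\ceil{\Mmax\xmax}$. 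Both routes work; yours is marginally sharper but otherwise equivalent in spirit.
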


\begin{proof}
We may assume $\xmax\ge1$.
Fix $M=\ceil{20000\xmax}$.
By \eqref{lp1},
\begin{align}\label{lb1}
  \cC_{kn}^{0,\xmax}\le C_{k}^{[0,\ceil{\xmax M};M]}(T_n,\bgs)
=\sum_{m<\xmax M}\cckm(T_n,\bgs).
\end{align}
We have, similarly as in \eqref{lc2c},
\begin{align}\label{hw5}
\E\bigsqpar{\cckm(T_n,\sig)}
=\frac{1}{2k}\sum_{\bP\in\PPkm(T_n)}{\pi(\bP)}.
\end{align}
Furthermore, arguing as in the proof of \refL{Lexpectation},
if \ref{C2} and \ref{C3} hold for some $\bP$ with $s(\bP)=k$, then 
$\cE_{n,g}^{(k)}$ holds (up to a relabelling), and thus by \refL{LXC},
\begin{align}\label{hw4}
  \pi(\bP)
\le
\P\bigpar{\cE_{n,g}^{(k)}}
\le \Bigpar{\frac{\CXC g}{n^2}}^k.
\end{align}
(We may assume that $g/n<1/7$ for $n>N$.)
Consequently, \eqref{hw5} yields
\begin{align}\label{hw6}
\E\bigsqpar{\cckm(T_n,\sig)}&
\le 
\bigabs{\PPkm(T_n)}\Bigpar{\frac{\CXC g}{n^2}}^k =
{\ppkm(T_n)}\Bigpar{\frac{\CXC g}{n^2}}^k 
\notag\\&
=
\Bigpar{\frac{\CXC g}{n^2}}^k\sum_{|\bm|=m,s(\bm)=k}{\pM(T_n)}
.\end{align}

As a special case of \eqref{good_paths} (with $s(\bm)=1$), we have for every
fixed $m$,
\begin{align}\label{lb2}
  P_m(T_n) \sim 
\frac{n^2}{12 M^2 g}(2m+1).
\end{align}
Hence, there exists $N$ such that
\begin{align}\label{hw1}
  P_m(T_n) \le \frac{n^2}{M^2 g}(m+1)
\end{align}
for every $0\le m<\xmax M$ and $n>N$.

Consider only $n>N$. Then, \eqref{hw1} implies that for every
$\bm=(m_1,\dots,m_k)$ with $|\bm|< \xmax M$ and $s(\bm)=k$,
\begin{align}\label{hw2}
\pM(T_n)&\leq 
\prod_{i=1}^k P_{m_i}(T_n)
\le
\lrpar{\frac{ n^2}{M^2 g}}^k
\prod_{i=1}^{k}\bigpar{m_i+1}
.\end{align}
Hence, by the arithmetic-geometric inequality, if also $k\ge K:=yM$,
\begin{align}\label{hw3}
\pM(T_n)& 
\le\lrpar{\frac{ n^2}{M^2 g}}^k
\left(\frac{|\m|+k}{k}\right)^k
\le\lrpar{\frac{2 n^2}{M^2 g}}^k
.\end{align}

Consequently, for $k\ge K$ and $n>N$,
\eqref{lb1} and \eqref{hw6} yield
\begin{align}\label{hw7}
\E\cCoy_{kn} 
\le 
\Bigpar{\frac{\CXC g}{n^2}}^k
\sum_{|\bm|< \xmax\Mmax,s(\bm)=k}\pM(T_n).
\end{align}
There are less than $(\xmax\Mmax+1)^k$ lists $\m$ with $|\m|< \xmax\Mmax$ and
$s(\m)=k$. Hence,  \eqref{hw7} and \eqref{hw3} yield
\begin{align}\label{hw8}
\E\cCoy_{kn} 
\le 
\lrpar{\frac{\CXC g}{n^2}}^k
\xpar{\xmax\Mmax+1}^k
\lrpar{\frac{2 n^2}{M^2 g}}^k
\le  
\lrpar{\frac{4\xmax\CXC}{M}}^k,
\end{align}
and the result \eqref{lbig} follows by our choice of $\Mmax$.
\end{proof}

\begin{proposition}\label{PPP}
  Let $(T_n)$ be a good sequence of trees, and
let as in \refL{LPPk} $\fC_n$
be the set of simple cycles in 
the underlying graph of
$(T_n,\bgs)$.
Consider the
(multi)set of the cycle lengths 
$\Xi_n:=\bigset{|\sC|/\Lmax:\sC\in\fC_n}$, with $\Lmax$ given by
\eqref{ellen}. 
Then the random set\/ $\Xi_n$,
regarded as a point process on $\ooo$, converges in distribution
to a Poisson process $\Xi$ on $\ooo$ with intensity
$\xpfrac{\cosh(t)-1}t$.
\end{proposition}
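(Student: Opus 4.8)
The plan is to realize $\Xi_n$ as the superposition $\bigcup_{k\ge1}\Xi_n\kkk$ of the point processes treated in \refL{LPPk} and to pass to the limit $k\to\infty$ with the help of the tail estimate in \refL{Lbig}. The starting observation is the elementary identity
\[
\sum_{k\ge1}\frac{t^{2k-1}}{(2k)!}
=\frac1t\Bigpar{\sum_{k\ge1}\frac{t^{2k}}{(2k)!}}
=\frac{\cosh t-1}{t}=:\rho(t),
\]
so that $\rho$ is the sum of the intensities $\rho_k(t):=t^{2k-1}/(2k)!$ of \refL{LPPk}. Since $\rho$ is analytic, hence locally integrable, on $\ooo$, the Poisson process $\Xi$ with intensity $\rho$ is precisely the superposition of independent Poisson processes $\Xi\kkk$ with intensities $\rho_k$; this is the object we must reach.

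Fix $\xmax<\infty$; it suffices to prove $\Xi_n|_{[0,\xmax]}\dto\Xi|_{[0,\xmax]}$, since $\xmax$ is arbitrary, $\Xi$ is a.s.\ finite on $[0,\xmax]$ because $\int_0^{\xmax}\rho<\infty$, and $\Xi$ a.s.\ has no point at $\xmax$. For $K\ge1$ write $\Xi_n=\Xi_n^{\le K}\sqcup\Xi_n^{>K}$ with $\Xi_n^{\le K}:=\bigcup_{k\le K}\Xi_n\kkk$ and $\Xi_n^{>K}:=\bigcup_{k>K}\Xi_n\kkk$. By the joint statement in \refL{LPPk}, for each fixed $K$ we have, as \ntoo,
\[
\Xi_n^{\le K}\big|_{[0,\xmax]}\ \dto\ \bigcup_{k\le K}\Xi\kkk\big|_{[0,\xmax]},
\]
which is a Poisson process on $[0,\xmax]$ with intensity $\sum_{k\le K}\rho_k$, and as $K\to\infty$ the latter converges in distribution to $\Xi|_{[0,\xmax]}$ (increasing intensities, with $\int_0^{\xmax}\rho<\infty$). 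On the other hand, \refL{Lbig} gives $K_0$ and $N$ such that $\E\cCoy_{kn}<2^{-k}$ whenever $n>N$ and $k>K_0$, so that for $K\ge K_0$ and $n>N$,
\[
\P\bigpar{\Xi_n^{>K}\cap[0,\xmax]\neq\emptyset}
\le\sum_{k>K}\E\cCoy_{kn}<2^{-K},
\]
whence $\limsup_{K\to\infty}\limsup_{n\to\infty}\P\bigpar{\Xi_n^{\le K}|_{[0,\xmax]}\neq\Xi_n|_{[0,\xmax]}}=0$. Combining these three limit relations by the standard approximation lemma for convergence in distribution, \cite[Theorem~4.2]{Billingsley} — exactly the device used at the end of the proof of \refL{LPPk}, with the cutoff $K$ now playing the role of $M$ — yields $\Xi_n|_{[0,\xmax]}\dto\Xi|_{[0,\xmax]}$, and letting $\xmax\to\infty$ finishes the proof.

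The only step that really needs care is the interchange of the limits \ntoo{} and $K\to\infty$; as in \refL{LPPk} this is made legitimate by a single uniform tail bound, here \refL{Lbig}, which guarantees that simple cycles decomposing into more than $K$ pairwise disjoint paths but having bounded rescaled length are negligible in expectation, uniformly in $n$. The remaining ingredients — the series identity for $(\cosh t-1)/t$, the representation of a Poisson process with locally integrable intensity as a countable superposition of independent Poisson pieces, and the reduction of point-process convergence on $\ooo$ to convergence of restrictions to the compact intervals $[0,\xmax]$ — are routine.
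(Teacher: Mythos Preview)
Your proof is correct and follows essentially the same route as the paper: decompose $\Xi_n$ according to $s(\sC)=k$, use \refL{LPPk} for the finite truncation $k\le K$, use \refL{Lbig} to control the tail $k>K$ uniformly in $n$, and conclude via \cite[Theorem~4.2]{Billingsley}. The only cosmetic difference is that the paper phrases the argument in terms of the interval counts $\cCxy_n$ and $\cCxy_{\le K,n}$ rather than the restricted point processes $\Xi_n|_{[0,\xmax]}$, but these are equivalent formulations of the same three-limit approximation.
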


\begin{proof}
Let, recalling \eqref{llambda},
\begin{align}\label{gl99}
  \gl^{x,y}:=\sumk \gl_k^{x,y}=\int_x^y\sumk \frac{t^{2k-1}}{(2k)!}\dd t
=\int_x^y\frac{\cosh t-1}{t}\dd t.
\end{align}
Then,  the conclusion is equivalent to
  \begin{align}\label{eleon}
    \cCxy_n\dto \Poi\bigpar{\gl^{x,y}},
\qquad\text{as \ntoo}
 , \end{align}
for every interval $[x,y)$ with $0\le x<y<\infty$,
with joint convergence (to independent limits) for any finite set of
disjoint such intervals.
As in the proof of \refL{LPPk}, for notational convenience, we consider only
a single interval $[x,y)$; the general case follows in the same way.

We have $\cCxy_n=\sumk\cCxy_{kn}$. Define, for $K\ge1$,
\begin{align}
  \cCxy_{\le K.n}:=\sum_{k=1}^K\cCxy_{kn}.
\end{align}
For each fixed $K$, \refL{LPPk} implies that, as \ntoo,
\begin{align}
  \cCxy_{\le K,n}\dto\Poi\bigpar{\gl_{\le K}^{x,y}},
\end{align}
with 
\begin{align}
  \gl_{\le K}^{x,y}:=\sum_{k=1}^K   \gl_{k}^{x,y}.
\end{align}
Hence, as $K\to\infty$, $\gl_{\le K}^{x,y}\to\gl^{x,y}$, and thus
\begin{align}
  \Poi\bigpar{\gl_{\le K}^{x,y}}
\dto
\Poi\bigpar{\gl^{x,y}}.
\end{align}
Moreover, by \refL{Lbig},  for large enough $K$,
\begin{align}
  \limsup_{\ntoo}\P(\cCxy_{\le K,n}\neq \cCxy_n)
\le \limsup_{\ntoo}\sum_{k>K} \P\bigsqpar{\cC_{kn}^{0,\xmax}>0}
\le \sum_{k>K}2^{-k}=2^{-K},
\end{align}
which tends to 0 as $K\to\infty$.
Hence, 
by \cite[Theorem 4.2]{Billingsley} 
again, 
\eqref{eleon} follows.
\end{proof}

\subsection{Finishing the proof for random trees and maps}
\label{SSfinishR}

\begin{lemma}
  \label{Lgood}
For every fixed $\Mmax$ and\/  $\m\in \setM$,
\begin{equation}\label{lgood1}
\biggpar{\frac{\Mmax^2}{n\Lmax^2}}^{s(\m)}
\pM(\T_n)
\pto\Pm,
\end{equation}
and for every fixed $q$,  $w$ and $c$,
\begin{equation}\label{lgood2}
\frac{1}{ n \Lmaxx^{2w-2}\log g}\sum_{\bt\in\PU_{q,w}(c\Lmax)}N_\bt(\T_n)
\pto0.
\end{equation}
%
Moreover, we may 
assume that the random trees $\T_n$ are coupled such that the convergences
\eqref{lgood1} and \eqref{lgood2} hold a.s.,
and thus $(\T_n)$ is good a.s.
\end{lemma}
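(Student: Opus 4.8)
The plan is to read off \eqref{lgood1} and \eqref{lgood2} from the in-probability statements already established, and then to upgrade these to almost sure convergence by an elementary coupling. For \eqref{lgood1} I would factor
\begin{equation*}
\biggpar{\frac{\Mmax^2}{n\Lmax^2}}^{s(\m)}\pM(\T_n)
=\frac{\pM(\T_n)}{\prod_{i=1}^{s(\m)}P_{m_i}(\T_n)}\cdot
\prod_{i=1}^{s(\m)}\frac{\Mmax^2\,P_{m_i}(\T_n)}{n\Lmax^2},
\end{equation*}
which holds on an event of probability tending to $1$ (namely, that all the denominators are nonzero, by \refL{lem_cv_paths}); then \refL{LpM} makes the first factor tend to $1$ in probability and \refL{lem_cv_paths} makes each of the $s(\m)$ remaining factors tend to $2m_i+1$ in probability, so the (fixed) product tends in probability to $\prod_{i=1}^{s(\m)}(2m_i+1)=\Pm$. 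For \eqref{lgood2} I would apply Markov's inequality to the non-negative sum together with the estimate $\E\bigsqpar{\sum_{\bt\in\PU_{q,w}(c\Lmax)}N_\bt(\T_n)}=o\bigpar{n\Lmaxx^{2w-2}\log g}$ of \refL{lem_uniendp_sum}.

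For the last assertion I would compress everything into a single random variable. It suffices to impose \eqref{lgood1} for $\Mmax\in\bbN$ and $\m\in\setM$, and \eqref{lgood2} for $q,w,c\in\bbN$ (real $c$ then follows from $\PU_{q,w}(c\Lmax)\subseteq\PU_{q,w}(\ceil{c}\Lmax)$ and monotonicity of the sum in $c$), so only countably many statements are involved. Enumerate them $j=1,2,\dots$, and for a tree $T$ let $e^{(j)}_n(T)\in[0,1]$ be the minimum of $1$ and the absolute difference between the left-hand side of the $j$th statement (evaluated at $T$) and its limit; then $e^{(j)}_n(\T_n)\pto0$, hence $\E\bigsqpar{e^{(j)}_n(\T_n)}\to0$ by bounded convergence. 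Put $D_n(T):=\sum_{j\ge1}2^{-j}e^{(j)}_n(T)\in[0,1]$; by dominated convergence $\E\bigsqpar{D_n(\T_n)}\to0$, so $D_n(\T_n)\pto0$, and since $e^{(j)}_n(T)\le 2^jD_n(T)$ for every $j$, any sequence of trees along which $D_n\to0$ satisfies all of \eqref{lgood1}--\eqref{lgood2} and is therefore good.

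It then remains only to couple the $\T_n$ so that $D_n(\T_n)\to0$ almost surely, for which I would use the quantile coupling. Since $\cT_n$ is finite, $D_n$ has a finitely supported law with distribution function $G_n$; take $U$ uniform on $(0,1)$ together with independent uniform variables $(V_n)_{n\ge1}$, all independent, set $\tilde D_n:=G_n^{-1}(U)$, and let $\tT_n$ be drawn, using $V_n$ as randomness, from the conditional law of $\T_n$ given $\set{D_n(\T_n)=\tilde D_n}$ (the uniform law on the corresponding level set of $\cT_n$). Then $\tT_n\eqd\T_n$ for each $n$, while $D_n(\tT_n)=G_n^{-1}(U)$; since $D_n(\T_n)\pto0$ gives $\P(D_n>\eps)\to0$ for every $\eps>0$, we have $G_n^{-1}(u)\to0$ for every $u\in(0,1)$, so $(\tT_n)$ is good almost surely, and we rename it $(\T_n)$. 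I do not expect a genuine obstacle here: all the analytic content sits in \refLs{LpM},~\ref{lem_cv_paths} and~\ref{lem_uniendp_sum}, and the one point that needs care is the standard quantile-coupling argument converting countably many convergences in probability into a single almost sure statement.
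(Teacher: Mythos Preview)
Your proof is correct and follows essentially the same route as the paper. The derivations of \eqref{lgood1} and \eqref{lgood2} are identical in spirit (the paper just cites \refLs{lem_cv_paths}, \ref{LpM}, and \ref{lem_uniendp_sum} without spelling out the factorisation or Markov step); for the coupling, the paper packages the countably many left-hand sides into a vector in $\bbR^\infty$ and invokes the Skorohod coupling theorem \cite[Theorem~4.30]{Kallenberg}, whereas you compress them into a single $[0,1]$-valued summary $D_n$ and build the coupling by hand via quantiles---this is exactly a special case of Skorohod's construction, so the two arguments coincide.
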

\begin{proof}
  First, \eqref{lgood1} is a consequence of \refLs{lem_cv_paths} and \refL{LpM},
and \eqref{lgood2} follows from \refL{lem_uniendp_sum}.

Consider the infinite vector $\bY$ of the \lhs{s} of \eqref{lgood1} and
\eqref{lgood2}, for all $\Mmax$, $\bm$, $q$, $w$ and integers $c$. Then
\eqref{lgood1}--\eqref{lgood2} say that $\bY$ converges in probability to
some non-random vector $\by$, in the product space $\bbR^\infty$.
By the Skorohod coupling theorem \cite[Theorem~4.30]{Kallenberg}, 
we may couple the random trees $\T_n$ such that $\bY\to\by$ a.s.,
and the result follows.
\end{proof}

\begin{proof}[Proof of \refTs{TPP} and \ref{thm_main}]
  By \refL{Lgood}, we may without loss of generality assume that the
  sequence of random graphs $\T_n$ is good a.s. 
Hence, if we condition on $(\T_n)$, we may apply  \refProp{PPP}.
Consequently, the conclusion of \refProp{PPP} holds also for the sequence of
random trees $\T_n$.
Moreover, the underlying graph $\GG(\T_n,\bgs)$ has the same distribution as the
random unicellular map $\u$,
and thus the result holds also for it.
\end{proof}

\subsection{Further remarks}

\begin{remark}\label{Rprimitive}
  We have, for definiteness, considered only simple cycles in this paper.
However, it follows from the proofs above, in particular the proof of
\refL{Lmom}, that whp{} all simple cycles in $\u$
with length $\le C$ are disjoint,
and thus every primitive cycle of length $\le C$ is simple.
(Recall that a primitive cycle may intersect itself, but it may not consist
of another cycle repeated several times.)
We omit the details.
\end{remark}

\begin{remark}\label{RPP}
It follows from the proofs above that 
the convergence in \refProp{PPP} holds jointly with the convergence for each
  fixed $k$ in \refL{LPPk}.
An alternative interpretation of this is 
that 
if
$\bbbN:=\set{1,2,\dots,\infty}$
is the one-point compactification of $\bbN$, then
the (multi)set of
points $\Xix_n:=\set{(|\sC|/\Lmax,s(\sC)):\sC\in\fC_n}$, regarded as a
point process in $\cS:=\ooo\times\bbbN$, converges to a certain Poisson
process $\Xix$ on $\cS$. 
(Cf.\ \eg{} \cite[Section 4]{SJ136} for the importance of using $\bbbN$
instead of $\bbN$ here.)
\end{remark}

This joint convergence to Poisson processes 
implies, for example,
by standard arguments the following.
\begin{corollary}\label{C1k}
  Let $\sC_1$ be the shortest cycle in the underlying graph of 
$(\T_n,\bgs)$. (This is whp unique by \refT{TPP}.)
Then $s(\sC_1)$ has a limiting distribution, as \ntoo, given by
\begin{align}
  \P\bigpar{s(\sC_1)=k}
\to p_k:=\intoo 
\frac{z^{2k-1}}{(2k)!}\exp\Bigpar{-\int_0^z \frac{\cosh t-1}{t}\dd t} \dd z.
\end{align}
\end{corollary}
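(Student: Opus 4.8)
The plan is to read this off from the joint Poisson-process convergence recorded in \refR{RPP} — equivalently, from \refL{LPPk} applied jointly over $k\le K$ together with the tail estimate \refL{Lbig} — and then to evaluate one elementary integral. First I would handle uniqueness: by \refT{TPP} the limiting process has no atoms, so $\sC_1$ is unique with probability $1-o(1)$, and on the complement $s(\sC_1)$ may be set arbitrarily. For $k\ge1$ I would write $\mu_{k,n}:=\min\bigset{|\sC|/\Lmax:\sC\in\fC\kkk_n}$, with $\min\emptyset:=\infty$, so that $\bigset{\mu_{k,n}<z}=\bigset{\cC^{0,z}_{kn}\ge1}$ and, on the event that $\sC_1$ is unique, $s(\sC_1)=\arg\min_k\mu_{k,n}$.

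Next I would identify the limit. By \refL{LPPk}, for each fixed $K$ the processes $\Xi\kkk_n$ ($k\le K$) converge jointly, and taking avoidance probabilities together with \eqref{llambda} gives $(\mu_{1,n},\dots,\mu_{K,n})\dto(\mu_1,\dots,\mu_K)$ in $\ooox^K$, where the $\mu_k$ are independent with $\P(\mu_k>z)=\exp(-\gl_k^{0,z})=\exp\bigpar{-z^{2k}/((2k)(2k)!)}$. Since $\sum_{k\ge1}\gl_k^{0,z}=\int_0^z\frac{\cosh t-1}{t}\dd t<\infty$ for every $z$ (see \eqref{gl99}), each $\mu_k$ is a.s.\ finite; moreover $\P(\mu_k<z)\le\gl_k^{0,z}$ is summable in $k$, so Borel--Cantelli gives $\mu_k\to\infty$ a.s., and a.s.\ the $\mu_k$ are pairwise distinct. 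Hence $K:=\arg\min_k\mu_k$ is an a.s.\ well-defined $\bbN$-valued random variable, and $\arg\min_{k\le K'}\mu_k=K$ for all large $K'$, a.s.

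The one step that needs genuine work is the truncation in $k$. Fix $\xmax<\infty$; by \refL{Lbig} there are $K_0,N$ with $\E\cCoy_{kn}<2^{-k}$ for $k>K_0$, $n>N$, hence $\P(\mu_{k,n}<\xmax)\le2^{-k}$ and so $\P(\exists k>K:\mu_{k,n}<\xmax)\le2^{-K}$ for $K\ge K_0$, $n>N$. Combined with $\min_{k\le K}\mu_{k,n}\le\mu_{1,n}$, on $\bigset{\sC_1\text{ unique}}$ one has the inclusion $\bigset{s(\sC_1)>K}\subseteq\bigset{\min_{k\le K}\mu_{k,n}\ge\xmax}\cup\bigset{\exists k>K:\mu_{k,n}<\xmax}$, so that $\limsup_n\P(s(\sC_1)>K)\le\P(\mu_1\ge\xmax)+2^{-K}$; taking first $\xmax$ and then $K$ large makes this as small as desired, so $(s(\sC_1))_n$ is tight. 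Together with the finite-dimensional convergence above and continuity of the argmin at tie-free configurations, a routine argument — compare $s(\sC_1)$ with $\arg\min_{k\le K'}\mu_{k,n}$, let $n\to\infty$ for fixed $K'$, then let $K'\to\infty$ — yields $s(\sC_1)\dto K$.

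Finally I would compute $p_k$. The density of $\mu_k$ at $z>0$ is $\frac{z^{2k-1}}{(2k)!}e^{-\gl_k^{0,z}}$, so by independence (ties having probability $0$),
\begin{align*}
\P\bigpar{s(\sC_1)=k}=\P(K=k) &=\intoo\frac{z^{2k-1}}{(2k)!}e^{-\gl_k^{0,z}}\prod_{j\neq k}e^{-\gl_j^{0,z}}\dd z\\
&=\intoo\frac{z^{2k-1}}{(2k)!}\exp\Bigpar{-\int_0^z\frac{\cosh t-1}{t}\dd t}\dd z=p_k,
\end{align*}
where the middle equality uses \eqref{gl99}. As a sanity check, $\sum_k p_k=\intoo\frac{\cosh z-1}{z}\exp\bigpar{-\int_0^z\frac{\cosh t-1}{t}\dd t}\dd z=1$, consistent with \refC{CPP}. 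I expect the truncation controlling large $k$ to be the only mildly delicate point, and it is exactly what \refL{Lbig} is designed for; the identification of the limit and the final integral are routine.
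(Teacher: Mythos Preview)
Your argument is correct and is exactly the ``standard argument'' the paper alludes to but omits: you unpack \refR{RPP} into its two ingredients, the joint convergence of \refL{LPPk} and the tail control of \refL{Lbig}, and then read off the law of the argmin. Two cosmetic points: you reuse the letter $K$ for both the random $\arg\min_k\mu_k$ and the truncation level, and strictly speaking \refL{Lbig} and \refL{LPPk} are stated for good deterministic sequences, so (as in \refSS{SSfinishR}) one should first couple via \refL{Lgood} and apply your argument conditionally on $(\T_n)$ --- but neither affects the substance.
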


Numerically we have
$p_1\doteq0.792$, 
$p_2\doteq0.177$, 
$p_3\doteq 0.028$, 
$p_4\doteq 0.003$. 

However, as far as we know,  $s(\sC)$ has no natural interpretation for
cycles in the unicellular map.

\newcommand\arxiv[1]{\texttt{arXiv}:#1}

\end{document}